\DeclareMathOperator{\supp}{supp}
\newcommand{\olive}[1]{\textcolor{olive}{#1}}
\def\sumJ(p-1){\underset{|J|=p-1}{{\sum}'}}
\def\sumKp-2{\underset{|K|=p-2}{{\sum}'}}
\def\sumjq+1{\underset {j\leq q+1}\sum}
\def\sumjn-1{\underset {j\leq n-1}\sum}
\def\bt{\begin{theorem}}
\def\el{\end{lemma}}
\def\bl{\begin{lemma}}
\def\et{\end{theorem}}
\def\bp{\begin{proposition}}
\def\ep{\end{proposition}}
\def\bd{\begin{definition}}
\def\ed{\end{definition}}
\def\br{\begin{remark}}
\def\er{\end{remark}}
\def\sumJ{\underset{|J|=q}{{\sum}'}}
\def\T{\text}
\newcommand{\Om}{\Omega}
\newcommand{\om}{\omega}
\newcommand{\bom}{\bar{\omega}}
\newcommand{\Dom}{\text{Dom} }
\newcommand{\we}{\wedge}
\newcommand{\no}[1]{\|{#1}\|}
\def\R{{\Bbb R}}
\def\I{{\mathcal I}}
\def\C{{\Bbb C}}
\def\la{\langle}
\def\ra{\rangle}
\def\di{\partial}
\def\dib{\bar\partial}
\DeclareMathOperator{\Lie}{Lie}
\numberwithin{equation}{section}
\def\T{\text}
\theoremstyle{plain}
\newtheorem{theorem}{Theorem}[section]
\newtheorem{corollary}[theorem]{Corollary}
\newtheorem{lemma}[theorem]{Lemma}
\newtheorem{proposition}[theorem]{Proposition}
\theoremstyle{definition}
\newtheorem{definition}[theorem]{Definition}
\theoremstyle{remark}
\newtheorem{remark}[theorem]{Remark}
\DeclareMathOperator{\Rre}{Re}
\newcommand{\p}{\partial}
\newcommand{\z}{\bar z}
\newcommand{\dbar}{\bar\partial}
\newcommand{\dbars}{\bar\partial^*}
\newcommand{\dbarb}{\bar\partial_b}
\newcommand{\vp}{\varphi}
\newcommand{\atopp}[2]{\genfrac{}{}{0pt}{2}{#1}{#2}}
\newcommand{\nn}{\nonumber}
\newcommand{\eps}{\epsilon}
\newcommand{\lam}{\lambda}
\newcommand{\opH}{\mathcal H}
\renewcommand{\H}{\mathcal H}
\def\Dom{\T{Dom}}
\newcommand{\opX}{\mathcal{X}}
\newcommand{\opY}{\mathcal{Y}}
\newcommand{\bdy}{\text{b}}
\newcommand{\opL}{\mathcal{L}}
\begin{document}

\title{Global regularity for the $\bar\partial$-Neumann problem on pseudoconvex manifolds}         
      \author{Tran Vu Khanh and Andrew Raich}          
               \address{T.V. Khanh}
\address{Department of Mathematics, International University, Vietnam National University Ho Chi Minh City, Vietnam}
\email{tkhanh@hcmiu.edu.vn}
        \address{A. Raich}
        \address{Department of Mathematical Sciences, SCEN 327, 1 University
        	of Arkansas, Fayetteville, AR 72701, USA} 
        \email{araich@uark.edu}
        \thanks{The first author was supported by a grant from the Vietnam National Foundation for Science and Technology Development (NAFOSTED) under grant number 101.02--2023.50. The second author  was supported by a grant from the Simons Foundation (707123, ASR)}

\begin{abstract}  
We establish general sufficient conditions for exact (and global) regularity in the $\dbar$-Neumann problem
on $(p,q)$-forms, $0 \leq p \leq n$ and $1\leq q \leq n$,
on a pseudoconvex domain $\Om$ with smooth boundary $b\Om$ in an $n$-dimensional complex manifold $M$. Our hypotheses include
two assumptions: 
\begin{enumerate}[i)]
	\item $M$ admits a function that is strictly plurisubharmonic acting on $(p_0,q_0)$-forms in a neighborhood of  $b\Om$ for some fixed
$0 \leq p_0 \leq n$, $1 \leq q_0 \leq n$,  or $M$ is a K\"ahler metric whose holomorphic bisectional curvature acting $(p,q)$-forms is positive; and 
\item  there exists a family of vector fields $T_\eps$ that are transverse to the boundary $\bdy\Om$ 
and generate one forms, which when applied to $(p,q)$-forms, $0 \leq p \leq n$ and $q_0 \leq q \leq n$, satisfy a ``weak form"
of the compactness estimate.
\end{enumerate}
 We also provide examples and applications of our main theorems.
\end{abstract} 
\subjclass[2020]{
32W05, 
32T99, 
35N15, 
32A25} 

\maketitle

\section{Introduction}
In this paper, we extend and generalize the best known conditions for exact and global regularity for the $\dbar$-Neumann problem on domains
in $\C^n$ to domains $\Om$ in a complex manifold $M$. Finding conditions for the $\dbar$-Neumann operator $N_{p,q}$ to map 
$C^\infty_{p,q}(\bar\Om)$ to itself is one of the oldest and most important problems in the theory of $\dbar$. All known methods (including ours)
prove continuity on the $L^2$-Sobolev spaces $H^s_{p,q}(\Om)$, and this property is known as exact regularity.

The first global regularity result is due to Kohn and Nirenberg \cite{KoNi65} for domains in $\C^n$,
who proved that compactness of the $\dbar$-Neumann operator 
suffices. Catlin \cite{Cat84} established a general condition for establishing compactness of the
$\dbar$-Neumann operator, namely,  that there exist a family of bounded plurisubharmonic functions near $\bdy\Om$ 
with arbitrarily large complex Hessians.

In $\C^n$, more recent results have shown that compactness is not necessary to prove global regularity. In fact, 
if $\Om\subset \C^n$ admits a plurisubharmonic defining function or a certain family of vector fields that approximately
commutes approximately with $\dbar$, 
then the the $\dib$-Neumann operators are globally regular \cite{BoSt90, BoSt91}.  
Straube \cite{Str08} and Harrington \cite{Har11} each find ways to unify and generalize the two earlier approaches for global regularity.

Straube's global regularity theorem is the following.
\begin{theorem}[Straube \cite{Str08}]\label{thm:Straube}
	Let $\Om\subset\subset\C^n$  be a smooth pseudoconvex domain and $\rho$ be a defining
	function for  $\Om$. Let $0\le p\le n$ and $1 \le q \le n$. Assume that there is a constant $C$ such that for all $\epsilon > 0$  there exist a defining function $\rho_\epsilon$ for $\Om$
	and a constant $C_\epsilon$  with 
\begin{equation}\label{Straube-est1}
	C^{-1} \le | \nabla \rho_\epsilon| \le C
\end{equation}
  on $\bdy\Om$, and
\begin{eqnarray}\label{Straube-est2}
\left\|\sum_{L\in \I_{p}, K\in\I_{q-1}} \sum_{j,k=1}^n\frac{\di \rho}{\di z_j}  \frac{\di^2 \rho_\epsilon}{\di z_k \di \bar z_j} u_{L, kK} dz^L\we  d\bar z^K\right\|_0^2
\le \epsilon(\no{\dib u}_0^2+\no{\dib^* u}_0^2)+C_\epsilon\no{u}_{-1}^2
\end{eqnarray}
	for all $u \in  C_{p,q}^\infty(\bar\Om)\cap \T{Dom}(\dib^*)$.
	Then the $\dib$-Neumann operator $N_{p,q}$ acting on $(p, q)$-forms is exactly regular.
\end{theorem}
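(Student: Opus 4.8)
The plan is to reduce exact regularity to the construction of a family of bounded, suitably behaved weight functions that allow us to run a twisted/weighted $\dbar$-Neumann estimate in the spirit of Boas--Straube and Kohn. Concretely, I would first fix $s \geq 0$ and aim to show that $N_{p,q}$ maps $H^s_{p,q}(\Om)$ boundedly into itself; by interpolation and duality it suffices to treat nonnegative integer $s$, and by elliptic regularity in the interior the whole difficulty is localized near $b\Om$. The main analytic engine will be the standard commutator scheme: apply a tangential derivative $D^s$ (built from a vector field $T$ transverse to $b\Om$ together with genuine tangential fields) to the equations $\dbar u = \dbar N f$, $\dbar^* u = \dbar^* N f$, and estimate the commutators $[\dbar, D^s]$ and $[\dbar^*, D^s]$. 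The obstruction is the term in which $T$ hits the coefficients, i.e.\ a term morally of the form $\sum_{j,k} \frac{\di\rho}{\di z_j}\frac{\di^2\rho}{\di z_k\di\bar z_j} u_{L,kK}$; this is exactly the quantity appearing on the left side of \eqref{Straube-est2}.

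The key step is therefore to exploit hypothesis \eqref{Straube-est2}: it says precisely that this problematic commutator term is bounded by $\eps$ times the $\dbar$-energy plus a lower-order term. So I would set up a weighted estimate with weight $e^{-t\vp}$ for a convex increasing function $\vp$ of $|z|^2$ (Kohn's device) to gain an extra large constant on the basic a priori estimate $\no{\dbar u}^2 + \no{\dbar^* u}^2 \gtrsim \no{u}^2_1$ near $b\Om$, and then absorb the $\eps$-small commutator term into the left-hand side. The precise mechanism: using \eqref{Straube-est1} one replaces the fixed defining function $\rho$ in the transverse vector field by $\rho_\eps$ when forming the tangential derivative $D^s_\eps$, so that the bad commutator is governed by $\frac{\di\rho}{\di z_j}\frac{\di^2\rho_\eps}{\di z_k\di\bar z_j}$ as in the statement; then \eqref{Straube-est2} with $\eps$ chosen small relative to the constant coming from pseudoconvexity (the positivity of the Levi form, which makes $\no{\bar\di u}^2+\no{\dbar^* u}^2$ control the full first Sobolev norm in the complex-normal direction) lets one absorb. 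One iterates on $s$, each time picking a new small $\eps$, and sums a geometric-type series.

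In more detail, the steps in order: (1) reduce to a neighborhood of $b\Om$ via interior elliptic regularity and a partition of unity, and reduce to showing an a priori estimate $\no{u}_s \lesssim \no{\dbar u}_s + \no{\dbar^* u}_s + \no{u}_0$ for $u$ in a dense subspace, using the standard elliptic-regularization argument to make the computation rigorous; (2) commute $D^s_\eps$ (built with $\rho_\eps$) through $Q(u,u) = \no{\dbar u}^2 + \no{\dbar^* u}^2$, collecting error terms; (3) identify the single genuinely problematic error term as the one in \eqref{Straube-est2}, all other commutator terms being either tangential (hence controlled by $\no{u}_{s}$ at lower order after integration by parts, using pseudoconvexity and the sign of the boundary term) or of strictly lower order; (4) invoke \eqref{Straube-est2} to bound this term by $\eps Q(D^{s-1}u, D^{s-1}u)$-type quantities plus $C_\eps \no{u}^2_{s-1}$, absorb for $\eps$ small, and close the induction; (5) pass from the a priori estimate to the genuine Sobolev bound on $N_{p,q}$ by the usual functional-analytic argument ($N$ is bounded on $L^2$, the estimate upgrades regularity), and conclude exact regularity, with global regularity $C^\infty_{p,q}(\bar\Om)\to C^\infty_{p,q}(\bar\Om)$ following from Sobolev embedding.

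The hard part will be step (3)--(4): carefully organizing the commutator errors so that the only term that is not obviously lower-order or of favorable sign is precisely the one hypothesis \eqref{Straube-est2} is tailored to kill, and verifying that the constant $C$ in \eqref{Straube-est2} (which must be uniform in $\eps$) is indeed independent of the Sobolev level $s$ so that the induction does not degrade. A secondary technical point is making the elliptic regularization compatible with the $\eps$-dependent vector fields $D^s_\eps$ so that all estimates are genuinely a priori; this is handled by the now-standard device of proving the estimate first for the regularized problem with constants uniform in the regularization parameter and then letting it tend to zero.
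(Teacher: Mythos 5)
The statement you are proving is cited in the paper as Straube's theorem \cite{Str08}; the paper does not re-prove it directly but proves the generalization Theorem~\ref{thm:Khanh-Andy1} (valid on complex manifolds) and verifies that, in $\C^n$, hypotheses \eqref{Straube-est1}--\eqref{Straube-est2} are equivalent to the manifold-level hypotheses \eqref{eqn:gamma Tepsilon}--\eqref{eqn:weak compactness 1}. Your outline captures the broad architecture of the original Straube/Boas--Straube scheme (tangential commutators $[T^s,\dbar]$, $[T^s,\dbars]$; identify the single genuinely dangerous term; kill it with the $\eps$-smallness in \eqref{Straube-est2}; regularize and pass to the limit), but the proposal as written contains one claim that is simply false and suppresses the step that is actually load-bearing.

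The false claim: you assert that pseudoconvexity makes $\no{\dbar u}_0^2 + \no{\dbars u}_0^2$ control $\no{u}_1^2$ near the boundary, and that Kohn's weight $e^{-t\vp}$ ``gains a large constant'' on this estimate. That is a subelliptic estimate, and it is not available on a general smooth bounded pseudoconvex domain (e.g.\ the estimate fails on the worm domain and on any domain with an analytic disc in the boundary). Pseudoconvexity only gives a nonnegative boundary integral in the Morrey--Kohn--H\"ormander identity; the full gain in the complex-normal direction comes from interior ellipticity, not the Levi form, and in any case is only one derivative in the \emph{bad} direction, not $\no{u}_1^2$. Consequently the absorption mechanism you propose --- ``choose $\eps$ small relative to the subelliptic constant'' --- cannot be what makes the argument close. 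What the $\eps$-smallness in \eqref{Straube-est2} actually buys is absorption into the $s$-level quantity $\no{\dbar u}_s^2 + \no{\dbars u}_s^2 + \no{\dbars\dbar u}_s^2 + \cdots$ after the commutators have been organized, not absorption into a nonexistent $\no{u}_{s+1}^2$ term; cf.\ \eqref{eqn:step2i} in the paper, where the hypothesis is applied to $T^s u$ and $T^s\dbar u$ and the result is re-expressed modulo lower order. The ``geometric-type series'' you mention at the end of the absorption step does not appear: each Sobolev level $s$ is closed by a single choice of $\eps=\eps(s)$ together with the inductive bound on $\mathcal A^{s-1}_\delta(u)$.

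The suppressed step: the role of Kohn's weight in both Straube's original argument and the paper is \emph{not} to strengthen the basic a priori estimate. Rather, one first establishes that the \emph{weighted} operators $N^{t\lambda}_{p,q}$, $\dbar N^{t\lambda}_{p,q}$, $\dbar^{*,t\lambda}N^{t\lambda}_{p,q}$ are exactly regular for $t$ large (Theorem~\ref{thm:L^2-theory}\,(viii)), and then uses a Boas--Straube identity --- \eqref{eqn:equality} in the paper --- to write $u$ in terms of these weighted operators and the harmonic projections. It is precisely this identity that transfers regularity from $N^t$ to $N$, and the commutator hypothesis \eqref{Straube-est2} is what controls the error terms that obstruct the transfer. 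Without an analogue of \eqref{eqn:equality}, your induction on $s$ has nothing to bootstrap from at the base level. A secondary but genuine technical point your sketch glosses over is the regularization: the paper introduces a nonstandard regularization $\Box^\delta = \Box + \delta T^*T$ (rather than $\Box + \delta\Delta$) precisely because the commutator calculus with $T^s_\eps$ interacts cleanly with $T^*T$ but not with the full Laplacian, and the a priori estimate of Theorem~\ref{thm:apriori thm1} is proved uniformly in $\delta$ so that the $\delta\to 0$ limit can be taken in Theorem~\ref{thm:regularization}. Your ``standard device'' remark does not address the compatibility issue you yourself flagged (the $\eps$-dependent vector fields versus the regularization parameter), which is exactly what drives the choice $\delta T^*T$.
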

In the proof by Straube, the condition $\Om\subset \C^n$ is used heavily as the regularity of $N_{p,q}$ follows by the regularity of the weighted $\dib$-Neumann operator 
$N^t_{p,q}$ due to the Kohn weighted theory  \cite{Koh73}.\\

The first goal of this paper is to show that Straube's theorem generalizes to the general setting of complex manifolds. 

\begin{theorem}\label{thm:Khanh-Andy1} 
Let $M$ be a complex manifold and
	 $\Om\subset M$ be a smooth bounded pseudoconvex domain 
	which admits a strictly plurisubharmonic function acting on $(p_0,q_0)$-forms in a neighborhood of $\bdy\Om$. Let $\rho$ be a smooth defining function of $\Om$ and denote  $\gamma=\frac{1}{2}(\di \rho-\bar\di \rho)$.   Assume that there is a constant $C$ such that for all $\epsilon > 0$  there exist a purely imaginary vector field $T_\epsilon$ 
	and a constant $C_\epsilon$  with 
	\begin{eqnarray}\label{eqn:gamma Tepsilon}
	C^{-1} \le |\gamma(T_\epsilon)| \le C
	\end{eqnarray}
	 on $\bdy\Om$, and
\begin{eqnarray}\label{eqn:weak compactness 1}
\left\| \alpha_\epsilon\# u \right\|_0^2\le \epsilon(\no{\dib u}_0^2+\no{\dib^* u}_0^2)+C_\epsilon\no{u}_{-1}^2
\end{eqnarray}
for all $u \in  C_{(p_0,q_0)}^\infty(\bar\Om)\cap \T{Dom}(\dib^*)$, 
where $\alpha_\epsilon$ is the negative of the $(1,0)$-part of the real form $\Lie_{T_\epsilon}( \gamma)$.
Then, for $0\le p\le n$ and $q_0\le q\le n$, the space of $L^2$ harmonic $(p,q)$-forms $\H_{p,q}(\Om)\subset C^\infty_{p,q}(\bar\Om)$ and the operators $N_{p,q}$, $\dib N_{p,q}$, $\dib^*N_{p,q}$, $N_{p,q}\dib$, $N_{p,q}\dib^*$, $\dib\dib^*N_{p,q}$, $\dib^*\dib N_{p,q}$ $\dib^*N_{p,q}\dib$ and $\dib N_{p,q}\dib^*$  are exactly regular.
\end{theorem}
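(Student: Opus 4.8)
The plan is to adapt Straube's weighted-space argument to the manifold setting, with the key new ingredient being a geometric reinterpretation of the curvature term $\sum_{j,k}\tfrac{\di\rho}{\di z_j}\tfrac{\di^2\rho_\eps}{\di z_k\di\bar z_j}$ as the coefficient form $\alpha_\eps$ coming from the Lie derivative $\Lie_{T_\eps}(\gamma)$. First I would recall the standard weighted $\dbar$-Neumann machinery: for a weight $t\vp$ with $\vp$ strictly plurisubharmonic on $(p_0,q_0)$-forms near $\bdy\Om$, the basic estimate gives that $N^t_{p,q}$ is well-behaved and, by the Kohn--Morrey--H\"ormander formula on manifolds, one controls $\|\dbar u\|^2 + \|\dbars_t u\|^2$ from below by $t$ times a weighted norm of $u$ plus the boundary curvature term. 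The point is that passing from the ordinary operators $N_{p,q}$ to the weighted ones $N^t_{p,q}$ on a manifold works exactly as in $\C^n$ once one has a globally defined weight, so the reduction ``global regularity of $N_{p,q}$ $\Leftarrow$ Sobolev estimates for $N^t_{p,q}$ uniformly in $t$'' carries over; this is why hypothesis (i) is needed.

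The core of the proof is the elliptic-regularization / commutator estimate. One introduces a regularized form $Q_{t,\delta}$ of the energy, obtains a solution $u_\delta \in C^\infty$, and must bound $\|u_\delta\|_s$ uniformly in $\delta$ by induction on $s$. The inductive step requires commuting a tangential derivative of order $s$ — built using the transverse vector field, here $T_\eps$ — past $\dbar$ and $\dbars_t$. The commutator $[\dbar, T_\eps]$ acting on forms produces precisely a zeroth-order term whose coefficient is governed by $\Lie_{T_\eps}(\gamma)$, i.e. by $\alpha_\eps\#$; this is the manifold analogue of the fact that in $\C^n$ differentiating $\dbars$ in a direction roughly tangential to $\bdy\Om$ produces the Hessian-of-$\rho_\eps$ term in \eqref{Straube-est2}. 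The hypothesis \eqref{eqn:weak compactness 1} says exactly that this bad commutator term is controlled, with small constant, by the energy form (plus a compact error), and \eqref{eqn:gamma Tepsilon} guarantees $T_\eps$ is genuinely transverse so that tangential derivatives of all orders can be generated. Feeding this into the subelliptic/maximal estimate coming from hypothesis (i) and absorbing the $\eps$-small term, one closes the induction and obtains the uniform $H^s$ bound, hence exact regularity of $N_{p,q}$; the regularity of the derived operators $\dbar N_{p,q}$, $\dbars N_{p,q}$, etc., then follows by the usual formal identities relating them to $N_{p,q}$, $N_{p,q-1}$, $N_{p,q+1}$, together with $\dbar$ and $\dbars$, exactly as in the flat case. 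The statement $\H_{p,q}(\Om)\subset C^\infty_{p,q}(\bar\Om)$ comes out of the same estimate applied to harmonic forms (for which $\dbar u = \dbars u = 0$).

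The main obstacle I anticipate is twofold. First, making the identification $[\dbar, T_\eps] \leftrightarrow \alpha_\eps\#$ precise on a manifold: one must carefully compute the Lie derivative of the contact-type form $\gamma = \tfrac12(\di\rho - \bar\di\rho)$ along $T_\eps$, extract its $(1,0)$-part, and verify that the resulting zeroth-order operator on $(p,q)$-forms is the one appearing in the commutator — this replaces a coordinate computation in $\C^n$ by an invariant one and is where the bulk of the technical work lies. Second, and more seriously, is the bookkeeping of error terms: on a manifold the Levi form, the torsion of $T_\eps$, and the curvature of the reference metric all contribute extra lower-order (but not obviously negligible) terms to the Kohn--Morrey--H\"ormander identity and to the commutators; one must check that all of these are either absorbed into the $\eps$-term of \eqref{eqn:weak compactness 1}, dominated by the strict plurisubharmonicity/positive-curvature gain from hypothesis (i), or pushed into the compact $C_\eps\|u\|_{-1}^2$ remainder. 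A secondary subtlety is that the weight $\vp$ is only strictly plurisubharmonic on $(p_0,q_0)$-forms, so one must use the standard fact that positivity on $(p_0,q_0)$-forms propagates to positivity on $(p,q)$-forms for $q \geq q_0$, which is why the conclusion is restricted to $q_0 \le q \le n$.
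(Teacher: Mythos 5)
Your outline gets the broad architecture right: weighted $\dbar$-theory from hypothesis (i), elliptic regularization, tangential derivatives built from $T_\eps$, the identification of $[\dbar,T_\eps]$ (via the Lie derivative of $\gamma$) with an $\alpha_\eps\#$ term controlled by \eqref{eqn:weak compactness 1}, and closing by absorption. Those pieces all appear in the paper's proof.

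However there is a genuine gap at the step you describe as ``passing from the ordinary operators $N_{p,q}$ to the weighted ones $N^t_{p,q}$ on a manifold works exactly as in $\C^n$ ... so the reduction carries over.'' The paper explicitly flags this as the point that does \emph{not} carry over: Straube's reduction relies on the triviality of $\H_{p,q}(\Om)$ for $q\geq 1$, which holds for bounded pseudoconvex domains in $\C^n$ but can fail on a manifold. With a nontrivial (finite-dimensional) harmonic space, the decomposition of $u$ in terms of weighted operators acquires harmonic-projection error terms, and one must prove separately that $\H_{p,q}(\Om)\subset C^\infty_{p,q}(\bar\Om)$ before using it. The paper handles this by deriving a Boas--Straube type identity
\[
u = P_{p,q}e^{-t\lambda}N_{p,q}^{t\lambda}\dbar\big(e^{t\lambda}\dbars u\big)
+(I-P_{p,q})\dbar^{*,t\lambda}N_{p,q+1}^{t\lambda}\dbar u + H_{p,q}u
+ P_{p,q}e^{-t\lambda}H^{t\lambda}_{p,q}e^{t\lambda}(I-P_{p,q})u,
\]
which isolates $H_{p,q}u$ and one other term living in $\H_{p,q}(\Om)$, and then proving smoothness of harmonic forms by an approximation argument with $N^{0,\nu}_{p,q}$ (adapted from Kohn), not by plugging $\dbar u=\dbars u=0$ into the $H^s$ estimate as you suggest (that would require knowing a priori that $u\in H^s$, which is exactly what is being proved). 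A second structural point you omit: the \emph{a priori} estimate is proved under the standing assumption that the Bergman projection $P_{p,q}$ is exactly regular, so the theorem is proved by a downward induction on $q$ starting from $q=n$ (where $P_{p,n}=I$), establishing exact regularity of $\dbars N_{p,q}\dbar$ and hence of $P_{p,q-1}=I-\dbars N_{p,q}\dbar$ at each step. Finally, the regularization the paper uses is not the generic $\delta\|\nabla u\|^2$ you have in mind but $\Box^\delta=\Box+\delta T^*T$, which is a nontrivial modification chosen precisely so that the $\delta$-term commutes well with the tangential derivatives $T^s$; using the standard regularizer would re-introduce normal derivatives and break the bookkeeping. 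You should also separate the propagation in degree of the $\alpha_\eps\#$-compactness estimate from $(p_0,q_0)$ to $(p,q)$ with $q\geq q_0$ (the paper's Lemma~\ref{lmm:degree extension}, a genuine combinatorial computation with the $\#$-operator) from the propagation of strict plurisubharmonicity, which is a different and more standard fact.
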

We define both the $\#$-operator and strictly plurisubharmonic functions on $(p_0,q_0)$-forms in 
Section \ref{sec:prelims}. It is worth noting that if $\Om$ is a bounded pseudoconvex in  $\C^n$, the space of $L^2$ harmonic $(p,q)$-forms is trivial for 
$q\ge 1$, i.e., $\H_{p,q}(\Om)=\{0\}$.\\

We now verify that in the case $M=\C^n$, the hypotheses of Theorem~\ref{thm:Straube} and  of Theorem \ref{thm:Khanh-Andy1} are equivalent. Let $\rho$ be a smooth defining function of $\Om$.  In a neighborhood $U$ of $b\Om$, we set 
$$\gamma = \frac{1}{2}(\di \rho -\dib \rho),\quad \text{and}\quad T=\frac{1}{|\di \rho |^2}\sum_{j=1}^n\left(\frac{\di\rho}{\di \z_j}\frac{\di}{\di z_j}-\frac{\di\rho}{\di z_j}\frac{\di }{\di \z_j}\right). $$
Then $\gamma(T)=1$. 
Assume the hypotheses of Theorem~\ref{thm:Straube} hold.  That means there exists $\rho_\epsilon$ is  the defining function of $\Om$  such that \eqref{Straube-est1} holds. Then there exist $h_\epsilon$ defined in a neighborhood $U$ of $b\Om$  such that $\rho_\epsilon = h_\epsilon \rho$ and   $C_1^{-1}\le |h_\epsilon|\le C_1$ in $U$ for some $C_1>0$ independent of $\epsilon$ (see  \cite[Lemma 2.5, page 51]{Ran86}). Let  $T_\epsilon=h_\epsilon  T$ in $U$. 
It follows $ \gamma(T_\epsilon)=h_\epsilon$ in $U$.
 This means \eqref{eqn:gamma Tepsilon} holds on $b\Om$. If the hypotheses of Theorem \ref{thm:Khanh-Andy1} hold, we define $\rho_\epsilon = \gamma(T_\epsilon) \rho$. It implies \eqref{eqn:gamma Tepsilon} holds. \\
 Now we show that \eqref{Straube-est2} and \eqref{eqn:weak compactness 1} are equivalent if $T_\epsilon = h_\epsilon T$ and $\rho_\epsilon=h_\epsilon \rho$ where  $h_\epsilon$ is a smooth function which is bounded away from zero on $b\Om$ uniformly in $\epsilon$.   Denote $\iota_{T_\epsilon}$ the contraction of forms with the vector field  $T_\epsilon$. Thus, 
 $$
 \iota_{T_\epsilon}(\gamma)=\gamma(T_\epsilon)=h_\epsilon\quad \text{and}\quad 
 \iota_{T_\epsilon}(d\gamma) =\olive{-}\frac{h_\epsilon}{|\di \rho|^2} \sum_{j,k=1}^n \left(\frac{\di \rho}{\di z_j}  \frac{\di ^2 \rho}{\di z_k \di\bar z_j}dz_k + \frac{\di \rho}{\di \bar z_j}  \frac{\di ^2 \rho}{\di z_j \di\bar z_k}d\bar z_k\right).
 $$
Using the Cartan formula and the fact that 
 $$  \frac{\di ^2 \rho_\epsilon}{\di z_k \di\bar z_j}= \frac{\di ^2 (h_\epsilon \rho )}{\di z_k \di\bar z_j}= h_\epsilon\frac{\di ^2 \rho}{\di z_k \di\bar z_j}+\frac{\di \rho}{\di z_k} \frac{\di  h_\epsilon }{\di \bar z_j }+\frac{\di  h_\epsilon }{\di z_k }\frac{\di \rho}{\di \bar z_j} +\rho \frac{\di^2  h_\epsilon }{ \di z_k\di\bar z_j},$$
 we obtain the expression of the $(1,0)$-form $\alpha_\epsilon$ of the 1-real form $ \Lie_{T_\epsilon}(\gamma)$ on $U$,
 \begin{align*}
\alpha_\epsilon =&\olive{-}\left( \Lie_{T_\epsilon}(\gamma) \right)^{1,0}\\
=&-\left( {\iota }_{T_\epsilon}(d\gamma)+ d(\iota_{T_\epsilon}(\gamma))\right)^{1,0}\\
 =&-\frac{h_\epsilon}{|\di \rho|^2} \sum_{j,k=1}^n\frac{\di \rho}{\di z_j}  \frac{\di ^2 \rho}{\di z_k \di\bar z_j}dz_k\olive{-}\di (h_\epsilon)\\
 =&-\frac{1}{|\di \rho|^2} \sum_{j,k=1}^n\left(\frac{\di \rho}{\di z_j}  \frac{\di ^2 \rho_\epsilon}{\di z_k \di\bar z_j} -\frac{\di \rho}{\di z_k}  \frac{\di \rho}{\di z_j}\frac{\di  h_\epsilon }{\di \bar z_j }   -\left|\frac{\di \rho}{\di z_j}\right|^2 \frac{\di  h_\epsilon }{\di z_k }   -\rho \frac{\di \rho}{\di z_j} \frac{\di^2  h_\epsilon }{ \di z_k\di\bar z_j}\right)dz_k -\p h_\eps.\\
 =&-\frac{1}{|\di \rho|^2} \sum_{j,k=1}^n\left(\frac{\di \rho}{\di z_j}  \frac{\di ^2 \rho_\epsilon}{\di z_k \di\bar z_j} -\frac{\di \rho}{\di z_k}  \frac{\di \rho}{\di z_j}\frac{\di  h_\epsilon }{\di \bar z_j }     -\rho \frac{\di \rho}{\di z_j} \frac{\di^2  h_\epsilon }{ \di z_k\di\bar z_j}\right)dz_k.
\end{align*}
 Thus, for any $u\in C_{p,q}^\infty(\bar\Om)\cap \T{Dom}(\dib^*)$, we observe that
\[
\alpha_\epsilon \# u =  \frac{1}{|\di \rho|^2} \sum_{j,k=1}^n\frac{\di \rho}{\di z_j}  \frac{\di ^2 \rho_\epsilon}{\di z_k \di\bar z_j}dz_k
\]  
since the remaining terms vanish on the boundary $b\Om$. Indeed, 
 $$\left(\sum_{j,k=1}^n \frac{\di \rho}{\di z_k}  \frac{\di \rho}{\di z_j}\frac{\di  h_\epsilon }{\di \bar z_j } dz_k  \right)\# u = \left(\sum_{j=1}^n \frac{\di \rho}{\di z_j}\frac{\di  h_\epsilon }{\di \bar z_j } \right)\left(\sum_{L\in \I_{p}, K\in\I_{q-1}} \sum_{k=1}^n  \frac{\di \rho}{\di z_k}  u_{L,kK}dz^L\we d\bar z^K\right) =0$$
since $u\in \T{Dom}(\dib^*)$. Therefore, \eqref{Straube-est2} and \eqref{eqn:weak compactness 1} are equivalent by using the elliptic estimates of the $\dib$-Neumann problem on compactly supported forms. \\

We remark that Theorem \ref{thm:Khanh-Andy1} is a proper generalization of Theorem \ref{thm:Straube}. For example, the case when $M$ is a Stein manifold is not covered by Theorem \ref{thm:Straube}. \\

The second goal of this paper is to relax the ``the existence of a strictly plurisubharmonic function" condition, at the expense of requiring stronger estimate than \eqref{eqn:weak compactness 1}.  
\begin{theorem}\label{thm:Khanh-Andy2}
Let $M$ be a complex manifold and $\Om\subset \subset M$ be a smooth bounded pseudoconvex domain.	 
	 Assume that there are constants $C,c$ such that for all $\epsilon > 0$  there exist a purely imaginary vector field $T_\epsilon$ 
	and a constant $C_\epsilon$  with 
	$$C^{-1} \le |\gamma(T_\epsilon)| \le C$$
	on $\bdy\Om$, and
	\begin{eqnarray}\label{eqn:main2}
	\no{u}_0^2+\frac{1}{\epsilon}\left(\no{\bar\alpha_\epsilon\we u}_0^2
	+\no{\alpha_\epsilon\# u}_0^2\right)\le c(\no{\dib u}_0^2+\no{\dib^* u}_0^2)+C_\epsilon\no{u}_{-1}^2
	\end{eqnarray}
	for all $u \in  C_{p,q}^\infty(\bar\Om)\cap \T{Dom}(\dib^*)$, where $\alpha_\epsilon$ is defined in Theorem \ref{thm:Khanh-Andy1}.
	Then the space of $L^2$ harmonic $(p,q)$-forms $\H_{p,q}(\Om)\subset C^\infty_{p,q}(\bar\Om)$ and the operators $N_{p,q}$, $\dib N_{p,q}$, $\dib^*N_{p,q}$, $N_{p,q}\dib$, $N_{p,q}\dib^*$, $\dib\dib^*N_{p,q}$, $\dib^*\dib N_{p,q}$ $\dib^*N_{p,q}\dib$ and $\dib N_{p,q}\dib^*$   are exactly regular.
\end{theorem}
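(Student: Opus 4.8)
The plan is to run the same weighted-$\dbar$-Neumann machinery used to prove Theorem \ref{thm:Khanh-Andy1}, but to replace the role played there by the strictly plurisubharmonic function on $(p_0,q_0)$-forms with the coercive term $\no{u}_0^2$ and the gain factor $1/\epsilon$ in hypothesis \eqref{eqn:main2}. First I would fix a vector field $T_\epsilon$ as in the hypothesis and form the associated real one-form $\Lie_{T_\epsilon}(\gamma)$, extract its $(1,0)$-part $\alpha_\epsilon$, and (just as in the $\C^n$ reduction carried out above) build from $\gamma(T_\epsilon)$ a ``defining-function-like'' weight $\rho_\epsilon$ and a weight function $\varphi_t = -t\,\chi(\rho)\,\log(-\rho)$ (or a smoothed variant), where $\chi$ is a cutoff supported near $b\Om$. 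The key commutator identity — the analogue of the Kohn weighted estimate — produces, when one integrates by parts the $T_\epsilon$-derivative of $\gamma$ through $\dbar$ and $\dbar^*_t$, exactly the two boundary-type error terms $\no{\bar\alpha_\epsilon\we u}_0^2$ and $\no{\alpha_\epsilon\# u}_0^2$ that appear in \eqref{eqn:main2}, together with a term controlled by $\no{u}_0^2$ coming from the zeroth-order part of the commutator and the curvature of $M$.

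The heart of the argument is the basic weighted identity
\[
\no{\dbar u}_{\varphi_t}^2 + \no{\dbar^*_t u}_{\varphi_t}^2 \gtrsim t\,Q_{\varphi_t}(u,u) - (\text{error terms involving }\alpha_\epsilon\text{ and }\no{u}_0^2) - C_t\no{u}_{-1}^2,
\]
valid for $u\in C^\infty_{p,q}(\bar\Om)\cap\Dom(\dbar^*)$. Here the $t$ in front of the good term lets us absorb the errors: choosing $\epsilon = \epsilon(t)\to 0$ as $t\to\infty$ and invoking \eqref{eqn:main2} with that $\epsilon$, the factor $1/\epsilon$ beats the fixed constant $c$ while $t$ is large enough to dominate, so the error terms $\frac1\epsilon(\no{\bar\alpha_\epsilon\we u}_0^2 + \no{\alpha_\epsilon\# u}_0^2)$ and the $\no{u}_0^2$ term are swallowed by a small multiple of $\no{\dbar u}^2+\no{\dbar^* u}^2$ plus the good weighted term. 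This is precisely where dropping the strict plurisubharmonicity hypothesis forces the stronger estimate: in Theorem \ref{thm:Khanh-Andy1} the positive Hessian on $(p_0,q_0)$-forms supplied the coercivity for free on a restricted range of $q$, whereas here one buys it back, for all $q$ in the relevant range, out of the $1/\epsilon$ gain in \eqref{eqn:main2}. Once one has, for each large $t$, an estimate of the form $\no{u}_{\varphi_t}^2 \lesssim Q_{\varphi_t}(u,u) + C_t\no{u}_{-1}^2$ with a compactness-type error, the standard elliptic-regularization and Sobolev-space bootstrapping argument (exactly as in Straube's book and in the proof of Theorem \ref{thm:Khanh-Andy1}) gives $H^s$ a priori estimates for the weighted Neumann operator $N^t_{p,q}$ for every $s$, hence exact regularity of $N^t_{p,q}$.

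The final step is to transfer regularity from the weighted operator $N^t_{p,q}$ back to the unweighted $N_{p,q}$ and to the listed derived operators, and to conclude $\H_{p,q}(\Om)\subset C^\infty_{p,q}(\bar\Om)$. Since $M$ need not be Stein, one cannot globally compare $\|\cdot\|_{\varphi_t}$ with $\|\cdot\|_0$ by a global bounded weight; instead, because $\varphi_t$ is supported in a neighborhood of $b\Om$ and is bounded there, the two norms are uniformly equivalent, so the a priori $H^s$ estimates pass to $N_{p,q}$ directly, and the harmonic space — finite-dimensional by compactness of the error — lies in $C^\infty_{p,q}(\bar\Om)$. The derived operators are handled by the usual functional identities ($\dbar N = \dbar N \dbar^* \dbar N$, etc.) together with the $H^s$ bounds, as in the proof of Theorem \ref{thm:Khanh-Andy1}. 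The main obstacle I anticipate is making the commutator computation on a general complex manifold clean: curvature terms of $M$ (absent or harmless in $\C^n$) enter the zeroth-order part of $[\dbar,T_\epsilon]$ and of the weighted adjoint, and one must check they are all either of the form absorbed by $\no{u}_0^2$ in \eqref{eqn:main2} or controllable by $C_\epsilon\no{u}_{-1}^2$; organizing the form-level bookkeeping (the $\#$ and $\wedge$ pieces, the dependence on $p$ and $q$) so that the estimate closes uniformly for all $0\le p\le n$ and the relevant $q$ is the technical crux.
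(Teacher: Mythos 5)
Your plan is not the one the paper actually follows, and it has a real gap at its core. You propose to resurrect Kohn's weighted $\dbar$-Neumann theory by building a weight $\varphi_t$ near $b\Om$ out of $\rho$ and then letting the $1/\eps$ gain in \eqref{eqn:main2} absorb the errors in the weighted identity. But the weighted theory leading to exact regularity of $N^t_{p,q}$ (Theorem~\ref{thm:L^2-theory}(vi)--(viii)) requires a weight whose complex Hessian on $(p_0,q_0)$-forms is bounded below by $t$ times the identity — that is precisely what the strictly plurisubharmonic function $\lambda$ of Theorem~\ref{thm:Khanh-Andy1} supplies, and precisely the hypothesis dropped in Theorem~\ref{thm:Khanh-Andy2}. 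A weight $\varphi_t = -t\chi(\rho)\log(-\rho)$ is not plurisubharmonic on a general manifold, and even if it were, its Hessian would blow up only normally near $b\Om$; it does not furnish the uniform $t$-coercivity of \eqref{eqn:L2 weight}. Consequently the identity \eqref{eqn:equality} expressing $N_{p,q}$ through $N^{t\lambda}_{p,q}$ and $H^{t\lambda}_{p,q}$ — which is the engine of Step~1 in the proof of Theorem~\ref{thm:apriori thm1} — is simply unavailable, and the claim ``choose $\eps(t)$ so that $1/\eps$ beats $c$ while $t$ dominates'' has nothing for $t$ to dominate.

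What the paper does instead is to extract everything from \eqref{eqn:main2} itself. The leading $\|u\|_0^2$ term is already the $L^2$ basic estimate \eqref{eqn:L2 appendix} (up to a compact error), so existence and finite-dimensionality of $\H_{p,q}(\Om)$ come for free without any weight. The a priori $H^s$ bound (Theorem~\ref{thm:apriori thm2}) is proved directly for the regularized operator $\Box^\delta = \Box + \delta T^*T$: by Remark~\ref{rmk:no need hypothesis}, pseudoconvexity alone gives \eqref{eqn:As thm2 Hs}; Lemma~\ref{lmm:Hs Ts} transfers full $H^s$ norms to the tangential derivatives $T_\eps^s$; then one applies \eqref{eqn:main2} with $u$ replaced by $T_\eps^s u$ and uses the commutator Lemma~\ref{lmm:commutators with Ts} to close the estimate, the $1/\eps$ weighting on $\|\bar\alpha_\eps\wedge\cdot\|_0^2 + \|\alpha_\eps\#\cdot\|_0^2$ being exactly what absorbs the $\alpha_\eps$ commutator terms. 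Finally, the passage from the a priori estimate to genuine regularity of $N_{p,q}$ and the derived operators is not the ``standard'' regularization you gesture at but the new scheme of Theorem~\ref{thm:regularization}, which regularizes with $\delta T^*T$ rather than $\delta\|\nabla u\|_0^2$, and treats smoothness of $\H_{p,q}(\Om)$ by a Kohn-style finite-dimensional bootstrapping. To repair your plan, replace the weighted-theory step by the direct a priori argument above and invoke Theorem~\ref{thm:regularization} for the limiting argument; the remark following Theorem~\ref{thm:Khanh-Andy2} that no degree extension is possible should also alert you that the weighted machinery in other degrees is not in play here.
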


\begin{remark}In Theorem~\ref{thm:Khanh-Andy2}, we are unable to extend the degrees of forms since there is no information about the $L^2$ basic estimate in other degrees. 
 	\end{remark}
As a corollary of Theorem~\ref{thm:Khanh-Andy2},  
we can establish a general global regularity for the $\dib$-Neumann problem on pseudoconvex domains in 
K\"ahler manifolds with positive holomorphic bisectional curvatures. 
\begin{theorem}\label{thm:Khanh-Andy3} Let $M$ be a K\"ahler manifold whose holomorphic bisectional curvature acting $(p,q)$-forms is positive.  Let $\Om\subset\subset M$ be  a smoothly pseudoconvex domain which admits a plurisubharmonic defining function for $\Om$. Then there is a constant $C$ such that for all $\epsilon > 0$  there exists a purely imaginary vector field $T_\epsilon$  with 
	$$C^{-1} \le |\gamma(T_\epsilon)| \le C\quad \T{and}\quad |\alpha_\epsilon|\le \epsilon$$
	on $\bdy\Om$. Moreover, the space of $L^2$ harmonic $(p,q)$-forms $\H_{p,q}(\Om)$ is trivial, i.e., $\H_{p,q}(\Om)=\{0\}$ and the operators $N_{p,q}$, $\dib N_{p,q}$, $\dib^*N_{p,q}$, $N_{p,q}\dib$, $N_{p,q}\dib^*$, $\dib\dib^*N_{p,q}$, $\dib^*\dib N_{p,q}$ $\dib^*N_{p,q}\dib$ and $\dib N_{p,q}\dib^*$   are exactly regular.
	\end{theorem}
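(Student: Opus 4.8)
The plan is to deduce Theorem~\ref{thm:Khanh-Andy3} from Theorem~\ref{thm:Khanh-Andy2} by verifying that the hypotheses of the latter are met. First I would recall the Bochner--Kodaira--Morrey--Kohn identity on the Kähler manifold $M$: for $u \in C^\infty_{p,q}(\bar\Om) \cap \Dom(\dbar^*)$ one has
\begin{equation*}
\no{\dbar u}_0^2 + \no{\dbar^* u}_0^2 = \no{\nabla^{0,1} u}_0^2 + \int_{b\Om}(\text{Levi form term}) + \int_\Om \la \Theta u, u\ra\, dV,
\end{equation*}
where $\Theta$ is the curvature operator built from the holomorphic bisectional curvature acting on $(p,q)$-forms (this is where the Kähler hypothesis is used: no torsion terms appear, and the commutator $[\dbar,\dbar^*]$ reduces cleanly to curvature). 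Since the bisectional curvature acting on $(p,q)$-forms is assumed positive and $b\Om$ is compact, on a neighborhood of $\bar\Om$ we get a uniform lower bound $\la \Theta u, u\ra \ge c_0 |u|^2$ for some $c_0 > 0$; combined with pseudoconvexity (which makes the boundary integral nonnegative for $u \in \Dom(\dbar^*)$) this yields the \emph{basic estimate with a gain}
\begin{equation*}
\no{u}_0^2 \le C_0\bigl(\no{\dbar u}_0^2 + \no{\dbar^* u}_0^2\bigr), \qquad u \in C^\infty_{p,q}(\bar\Om) \cap \Dom(\dbar^*),
\end{equation*}
which in particular already forces $\H_{p,q}(\Om) = \{0\}$.

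Next I would produce the vector fields $T_\eps$. Let $\rho$ be the given plurisubharmonic defining function and set $\gamma = \frac12(\di\rho - \dbar\rho)$; take $T$ to be the (suitably normalized) purely imaginary vector field dual to $\gamma$ in the real normal direction, so that $\gamma(T) = 1$ on a neighborhood of $b\Om$, and simply put $T_\eps = T$ for every $\eps$ (so the bounds $C^{-1} \le |\gamma(T_\eps)| \le C$ are trivial). The key point is to compute $\alpha_\eps = -(\Lie_{T}\gamma)^{1,0}$ via Cartan's formula $\Lie_T\gamma = \iota_T(d\gamma) + d(\iota_T\gamma)$ exactly as in the $\C^n$ verification earlier in the paper: since $d\gamma = -\di\dbar\rho$ (as $d\rho = \di\rho + \dbar\rho$), the $(1,0)$-part of $\iota_T(d\gamma)$ is governed by the complex Hessian $\di\dbar\rho$ of $\rho$. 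Plurisubharmonicity of $\rho$ does not by itself make this vanish, so I would instead argue that on the boundary the relevant part of $\alpha_\eps$ is controlled by $\di\dbar\rho$ restricted to directions transverse to the complex tangent space, and — here using that we may further rescale $\rho$ by a conformal factor $h$ without destroying plurisubharmonicity, i.e. $\rho \mapsto h\rho$ with $h$ chosen so the Hessian concentrates appropriately — one gets $|\alpha_\eps| \le \eps$ on $b\Om$ for an appropriate choice of $T_\eps = h_\eps T$. (If a single plurisubharmonic defining function already gives $|\alpha| \le \eps$ uniformly one does not even need to rescale; the statement as written only claims \emph{existence} of such $T_\eps$.)

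Finally, with $|\alpha_\eps| \le \eps$ on $b\Om$ in hand, I would verify estimate \eqref{eqn:main2}. The terms $\no{\bar\alpha_\eps \we u}_0^2$ and $\no{\alpha_\eps \# u}_0^2$ are estimated by interpolating the pointwise boundary bound $|\alpha_\eps| \le \eps$ against an $L^2$ bound: using a tubular neighborhood of $b\Om$ and the trace/elliptic estimates for the $\dbar$-Neumann problem (exactly the mechanism invoked at the end of the $\C^n$ verification), one obtains
\begin{equation*}
\no{\bar\alpha_\eps \we u}_0^2 + \no{\alpha_\eps \# u}_0^2 \le C\eps^2\bigl(\no{\dbar u}_0^2 + \no{\dbar^* u}_0^2\bigr) + C_\eps\no{u}_{-1}^2,
\end{equation*}
so after dividing by $\eps$ the first terms contribute $C\eps(\no{\dbar u}_0^2 + \no{\dbar^* u}_0^2)$; adding the basic estimate $\no{u}_0^2 \le C_0(\no{\dbar u}_0^2 + \no{\dbar^* u}_0^2)$ gives precisely \eqref{eqn:main2} with $c = C_0 + C\eps \le C_0 + C$ uniform in $\eps$. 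Theorem~\ref{thm:Khanh-Andy2} then applies verbatim and delivers exact regularity of $N_{p,q}$ and all the associated operators, while triviality of $\H_{p,q}(\Om)$ was already established above. The main obstacle is the second step: extracting the pointwise decay $|\alpha_\eps| \le \eps$ of the $(1,0)$-part of $\Lie_{T_\eps}\gamma$ on $b\Om$ from positivity of the bisectional curvature together with the existence of a plurisubharmonic defining function — i.e. choosing the conformal rescaling $h_\eps$ correctly so that the Hessian contribution to $\alpha_\eps$ is absorbed — whereas the curvature/Bochner step and the functional-analytic reduction are essentially standard.
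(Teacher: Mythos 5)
Your plan is structurally the same as the paper's: establish the gain estimate $\no{u}_0^2 \le C_0(\no{\dbar u}_0^2 + \no{\dbars u}_0^2)$ from the Bochner--Kodaira--Morrey--Kohn identity using positivity of the curvature operator $\Theta$, deduce $\H_{p,q}(\Om)=\{0\}$, produce $T_\eps$ with $|\alpha_\eps|\le\eps$ on $\bdy\Om$, interpolate the pointwise boundary smallness against an $L^2$/elliptic bound to obtain \eqref{eqn:main2}, and invoke Theorem~\ref{thm:Khanh-Andy2}. The split-into-a-thin-strip $S_\eps = \{-\delta_\eps\le\rho<0\}$ and elliptic interior estimate argument you sketch at the end is exactly how the paper converts $|\alpha_\eps|\le\eps$ on $\bdy\Om$ into the required estimate, so that step is fine.

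Two remarks. First, you correctly flag the construction of $T_\eps$ (equivalently, the conformal rescaling $h_\eps$) as the crux; the paper itself does not supply a self-contained proof here either, but isolates it as Proposition~\ref{prop1} and proves it by reference to the Boas--Straube--Sucheston argument (\cite{StSu02}, \cite[Section~5.9]{Str10}) or to the authors' CR version in \cite[Theorem~1.5]{KhRa20}. So your gap matches the paper's deliberate delegation. Second, and this is worth correcting: your closing sentence says the pointwise decay $|\alpha_\eps|\le\eps$ is to be extracted \emph{``from positivity of the bisectional curvature together with the existence of a plurisubharmonic defining function.''} That is a conflation of the two hypotheses. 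Proposition~\ref{prop1} requires only the plurisubharmonic defining function and is stated and proved for a general complex manifold with no curvature assumption; the positive bisectional curvature is used only to supply the $L^2$ basic estimate via the Bochner identity (for which, in turn, the plurisubharmonic defining function is not needed, only pseudoconvexity). Keeping the two inputs disentangled matters because it is exactly what lets the paper factor the argument through Proposition~\ref{prop1} and Theorem~\ref{thm:Khanh-Andy2}.
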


The holomorphic bisectional curvature of the Fubini-Study metric of $\C\mathbb P^n$ acting on $(0,q)$-forms is positive when
with $q\ge 1$, which immediately yields following corollary.
\begin{corollary} Let $\Om\subset\subset \C\mathbb P^n$ be  a smooth pseudoconvex domain which admits a plurisubharmonic defining function for $\Om$. 
Then the conclusion of Theorem ~\ref{thm:Khanh-Andy3} holds on $\Om$ for  $p=0$  and $1\le q\le n$. 
\end{corollary}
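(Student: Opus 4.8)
The plan is to deduce this corollary directly from Theorem~\ref{thm:Khanh-Andy3} by verifying its single nontrivial hypothesis for $M = \C\mathbb{P}^n$ with the Fubini--Study metric, in the bidegree range $p=0$, $1 \le q \le n$. The only thing that needs checking is that the holomorphic bisectional curvature of the Fubini--Study metric, \emph{acting on $(0,q)$-forms}, is positive for $q \ge 1$; once that is granted, Theorem~\ref{thm:Khanh-Andy3} applies verbatim to any smoothly pseudoconvex $\Om \subset\subset \C\mathbb{P}^n$ carrying a plurisubharmonic defining function, yielding triviality of $\H_{0,q}(\Om)$ and exact regularity of $N_{0,q}$ together with all the listed composite operators. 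So the corollary is essentially a ``plug in the example'' statement, and the body of its proof is the curvature computation plus a one-line invocation of the theorem.

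First I would recall the curvature operator on $(0,q)$-forms. For a Kähler metric the relevant Bochner--Kodaira curvature term acting on $(0,q)$-forms is built from the Ricci tensor and the full curvature tensor; in an orthonormal frame it has the form $\sum R_{i\bar\jmath} \, \bar\omega^i \wedge \iota_{\bar\omega^j} u$ (the piece that makes the Bochner technique work for $(0,q)$-forms, $q\ge1$), and for the Fubini--Study metric, which is Einstein with positive Ricci curvature $R_{i\bar\jmath} = (n+1)\,g_{i\bar\jmath}$, this operator is a positive multiple of the identity on $\Lambda^{0,q}$ for every $q \ge 1$. More precisely, since the paper phrases the hypothesis in terms of ``holomorphic bisectional curvature acting on $(p,q)$-forms being positive,'' I would point to the classical fact (see e.g. Siu--Yau, Mok, or the Bochner--Kodaira--Morrey--Kohn framework) that $\C\mathbb{P}^n$ with the Fubini--Study metric has positive holomorphic bisectional curvature in the ordinary sense, and that positivity of holomorphic bisectional curvature forces positivity of the induced curvature operator on $(0,q)$-forms for all $q \ge 1$ — indeed the $(0,q)$-curvature operator is a sum over a $q$-element index set of bisectional curvature contributions, each strictly positive. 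This is exactly the degree restriction $1 \le q \le n$ (and $p = 0$) recorded in the corollary; for $p > 0$ the curvature term picks up holomorphic \emph{sectional}-type contributions of mixed sign and positivity can fail, which is why the corollary is stated only for $p=0$.

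With the curvature positivity in hand, the second step is purely bookkeeping: $\C\mathbb{P}^n$ is a compact Kähler manifold, so it is in particular a Kähler manifold in the sense of Theorem~\ref{thm:Khanh-Andy3}; the hypothesis ``$\Om$ admits a plurisubharmonic defining function'' is assumed in the corollary's statement; hence all hypotheses of Theorem~\ref{thm:Khanh-Andy3} are met for the bidegrees $(0,q)$ with $1 \le q \le n$. Applying that theorem gives the existence, for each $\epsilon > 0$, of a purely imaginary transverse vector field $T_\epsilon$ with $C^{-1} \le |\gamma(T_\epsilon)| \le C$ and $|\alpha_\epsilon| \le \epsilon$ on $\bdy\Om$, the vanishing $\H_{0,q}(\Om) = \{0\}$, and exact regularity of $N_{0,q}$, $\dib N_{0,q}$, $\dib^* N_{0,q}$, $N_{0,q}\dib$, $N_{0,q}\dib^*$, $\dib\dib^* N_{0,q}$, $\dib^*\dib N_{0,q}$, $\dib^* N_{0,q}\dib$, and $\dib N_{0,q}\dib^*$. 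That is precisely the asserted conclusion, so the proof closes here.

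The only genuine obstacle is making the phrase ``holomorphic bisectional curvature acting on $(p,q)$-forms is positive'' precise enough to be sure the Fubini--Study metric satisfies it for $(0,q)$, $q \ge 1$ — i.e. matching the paper's own definition (given in Section~\ref{sec:prelims}) of the curvature operator on forms against the Einstein/positive-bisectional-curvature properties of $\C\mathbb{P}^n$. I expect this to reduce, in the paper's notation, to the observation that the relevant curvature endomorphism on $\Lambda^{0,q}$ equals a positive combination of the Ricci form (which for Fubini--Study is $(n+1)$ times the metric) and strictly positive bisectional terms, hence is uniformly positive-definite; once the definition from Section~\ref{sec:prelims} is unwound this is immediate, and everything else is a direct citation of Theorem~\ref{thm:Khanh-Andy3}.
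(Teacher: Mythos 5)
Your proof takes the same route as the paper: verify that the Fubini--Study metric has positive holomorphic bisectional curvature acting on $(0,q)$-forms for $q\ge 1$ (the paper records this via the explicit identity $\langle\Theta\#u,u\rangle=q(2n+1)|u|^2$ cited just before the corollary), and then invoke Theorem~\ref{thm:Khanh-Andy3} directly. Your extra discussion of the Einstein/Ricci structure is a correct way to see the positivity, and the rest of your argument matches the paper's one-line deduction.
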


The outline of the rest of the paper is as follows. The technical preliminaries are
given in Section~\ref{sec:prelims}. Controlling derivatives with $\dbar$ and $\dbars$ are established  in
Section~\ref{sec:derivatives}.  We prove Theorem~\ref{thm:Khanh-Andy1} in Section~\ref{sec:proof of thm1}. Its proof follows
the argument of \cite{Str08} and \cite {KhRa20} in a general setting. The proofs of Theorem~\ref{thm:Khanh-Andy2} and Theorem~\ref{thm:Khanh-Andy3} are given in Section~\ref{sec:Proofs}. In the last section, we introduce a new version of elliptic regularization.
%
%

\section{Preliminaries}\label{sec:prelims}
\subsection{Complex geometry}

Our setup follows \cite{Hor65,Koh73}
Let $M$ be a paracompact $n$-dimensional complex manifold and $\Om\subset M$ a smooth, open submanifold with compact closure. 
The boundary of $\Om$ is denoted by $\bdy\Om$. 
On $M$, fix a smooth Hermitian metric $g$ so that in appropriate local coordinates,
\[
g = \sum_{j,k=1}^n g_{j\bar k}\, dz_j \otimes d\z_k
\]
and its associated Hermitian form
\begin{equation}\label{eqn:metric defn}
\om = i \sum_{j,k=1}^n g_{j\bar k}\, dz_j \wedge d\z_k.
\end{equation}
As usual, $g^{j\bar k}$ will denote the inverse matrix to $g_{j\bar k}$ and the induced metric on the cotangent space (still called $g$) is
\[
g = \sum_{j,k=1}^n g^{j\bar k}\, \frac{\p}{\p z_j} \otimes \frac{\p}{\p\z_k}.
\]
We denote the space of increasing $q$-tuples by $\I_q$, that is,
$$\I_q = \{ J = (j_1,\dots,j_q) : 1 \leq j_1 < j_2 < \cdots < j_q \leq n\}.$$
Let $T^{p,q}(M)$ be the tangent bundle of $(p,q)$-vectors (with smooth coefficients) and
$\Lambda^{p,q}(M)$ the space of $(p,q)$-forms on $M$. 
In an analytic coordinate system $z_1,\dots,z_n$ and $\la\cdot,\cdot\ra$ is the inner product induced by $g$, then
for $u,v\in\Lambda^{p,q}$,
\[
u = \sum_{\atopp{I\in\I_p}{J\in\I_q}} u_{IJ}\, dz^I\wedge d\z^J, \quad 
v = \sum_{\atopp{I\in\I_p}{J\in\I_q}} v_{IJ}\, dz^I\wedge d\z^J
\quad\text{and}\quad
\la u,v\ra = \sum_{\atopp{I,K\in\I_q}{J,L\in\I_q}} u_{IJ}\overline{v_{KL}} g^{IJ,KL}
\]
where
\[
g^{IJ,KL} = \la dz^I\wedge d\z^J, dz^K\wedge d\z^L \ra.
\]
A defining function $\rho$ for $\Om$ is a $C^\infty$ function defined on a neighborhood
of $\bar\Om$ so that $\Om = \{\rho<0\}$ and $|d\rho|=1$ on $\bdy\Om$. 
The coordinate functions $z_1,\dots,z_n$ are not orthogonal, and we can Gram-Schmidt to obtain $(1,0)$
forms $\om_1,\dots,\om_n$ so that 
\[
\la\om_j,\om_k\ra = \delta_{jk}
\]
and any $(p,q)$-forms $u,v$ satisfy
\[
u = \sum_{\atopp{I\in\I_p}{J\in\I_q}} u_{IJ}\, \om^I\wedge\bom^J, \quad 
v = \sum_{\atopp{I\in\I_p}{J\in\I_q}} v_{IJ}\, \om^I\wedge\bom^J
\quad\text{and}\quad
\la u, v\ra = \sum_{\atopp{I\in\I_p}{J\in\I_q}} u_{IJ} \overline{v_{IJ}}.
\]
Given a function $\lambda$ that is smooth and bounded near $\bdy\Om$, we say that $\lambda$ is
\emph{strictly plurisubharmonic on $(p_0,q_0)$-forms} if given any $(p_0,q_0)$-form $u$ and a local coordinate patch $U$ near $\bdy\Om$,
\[
\sum_{\atopp{I\in\I_{p_0}}{K\in\I_{q_0-1}}} \sum_{j,k=1}^n \frac{\p^2\lambda}{\p z_j \p\z_k} u_{I,jK} \overline{u_{I,kK}} >0.
\]

\subsection{Sobolev Spaces on $M$}
To express global objects, we need a locally finite partition of unity $\{\eta_\alpha\}$ that is both subordinate to the cover $\{U_\alpha\}$
and sufficiently refined so that $\om$ can be expressed as in \eqref{eqn:metric defn}. Let 
$T^{p,q}(\Om)$ be the bundle of smooth $(p,q)$-vectors on $\Om$
and $\Lambda^{p,q}(\Om)$ be the bundle of smooth $(p,q)$-forms on $T^{p,q}(\Om)$. 
Locally, this means $u\in\Lambda^{p,q}(\Om)$ can be expressed
\[
u(z) = \sum_{I\in\I_p}\sum_{J\in\I_q} u_{IJ} \, dz^I \wedge d\z^J.
\] 
We define $L^2_{p,q}(\Om)$ as the completion of $\Lambda^{p,q}(\Om)$ under the inner product
\[
(u,v)_{L^2(\Om)} = \int_\Om \la u,v \ra \, dV
\]
for $u, v \in \Lambda^{p,q}(\Om)$ and volume measure $dV$. If $\phi$ is a function defined on a neighborhood of $\bar\Om$, then
the weighted $L^2$ space, $L^2_{p,q}(\Om,\phi)$ has inner product
\[
(u,v)_{L^2(\Om,\phi)} = \int_\Om \la u,v \ra  e^{-\phi}\, dV
\]
and norm $\|u\|_\phi^2 = (u,u)_{L^2(\Om,\phi)} = (u,u)_\phi$.

Next, for each
positive integer $s$, we define the Sobolev space $H^s(\Om)$ as the completion of $C^\infty(\Om)$ under the inner product
\[
(u,v)_{H^s} 
= \sum_{\alpha\in \mathfrak I} \sum_{|\gamma| \leq s} 
\big(D^{\gamma} (\sqrt{\eta_\alpha} u),
D^{\gamma} (\sqrt{\eta_\alpha} v) \big)_{H^s}
\]
where the $|\gamma|$-th order derivative $D^\gamma$ is taken via a local (real) unitary frame $\{X_1, \dots, X_n, Y_1, \dots, Y_n\}$.
By including $\sqrt{\eta_\alpha}$ in the definition of $H^s_{p,q}(\Om)$, it is immediate that the $H^0_{p,q}(\Om)$ and $L^2_{p,q}(\Om)$-norms are equal.

Given a vector field $X$ and a form $u$, let $Xu$ denote
differentiation of the components of $u$ by $X$ in a given coordinate system. In particular, if $u$ is a $(p,q)$-form given locally by
$u = \sum_{I\in\I_q}\sum_{J\in\I_q} u_{IJ}\, dz^I\wedge d\z^J$, then if
\[
Xu = \sum_{I\in\I_q}\sum_{J\in\I_q} Xu_{IJ}\, dz^I\wedge d\z^J.
\]
The operator $u \mapsto Xu$ depends on our choice of coordinates, but we will always be using the good coordinate charts that we
defined when constructing the Sobolev spaces. Since we are concerned with regularity in Sobolev spaces, we have already made choices
that depend on the metric and coordinate charts.

We define the \emph{metric contraction operator} $\#:\Lambda^{1,0}\times\Lambda^{p,q} \to \Lambda^{p,q-1}$ by
\[
\la \bar\theta\wedge u, v \ra = \la u, \theta \# v \ra
\]
for all $\theta \in \Lambda^{1,0}$, $u \in \Lambda^{p,q-1}$, and $v \in \Lambda^{p,q}$. In the case that $p=0$ and we 
are have orthonormal forms $\om_1,\dots,\om_n$ on a neighborhood $U$, then 
\[
\theta\# u=\sum_{I\in\I_{q-1}} \left(\sum_{j=1}^n \theta_j u_{jI}\right)\bom^I\quad\T{if $\theta=\sum_j\theta_j\,\om_j$ is a $(1,0)$-form on $U$}.
\]
We also define a metric contraction operator $\#:\Lambda^{1,1}\times\Lambda^{p,q} \to \Lambda^{p,q}$ by
\[
\quad\theta\# v=\sum_{I\in\I_{q-1}}\sum_{k=1}^n \left(\sum_{j=1}^n \theta_{jk} v_{jI}\right)\bom_k\we\bom^{I}
\quad\T{if $\theta=\sum_{j,k=1}^n \theta_{jk}\,\om_j\we\bom_k$ is a $(1,1)$-form on $U$}.
\]
Note that
\[
\theta \# v = \sum_{j,k=1}^n \theta_{jk}\,\bom_k \wedge\big(\om_j\#v\big).
\]
We will refer to both of these operators as the $\#$-operator. The $\#$-operator depends on the choice of metric but not the choice
of coordinates.

We will use the small constant/large constant inequality, namely, for any $\delta>0$ and positive numbers $a,b>0$
\begin{equation}\label{eqn:sc-lc} \tag{sc-lc}
ab \leq \frac \delta2 a^2 + \frac{1}{2\delta} b^2.
\end{equation}

\subsection{The Levi form on $\Om$}
The operator 
\[
\dbar_{p,q}  = \dbar : L^2_{p,q}(\Om)\to L^2_{p,q+1}(\Om)
\] 
is defined with its $L^2$-maximal definition and the adjoint
\[
\dbars_{p,q}  = \dbars :L^2_{p,q+1}(\Om) \to L^2_{p,q}(\Om)
\]
is defined with respect to the inner product $(\cdot,\cdot)$. The $\dbar$-Neumann Laplacian
$\Box_{p,q} = \dbar_{q-1}\dbars_{q-1}+\dbars_q\dbar_q$, 
and the space of harmonic forms
\[
\opH_{p,q}(\Om) = L^2_{p,q}(\Om) \cap \ker \dbar \cap \ker\dbars.
\]
When it exists, the $\dbar$-Neumann operator
\[
N_{p,q}: L^2_{p,q}(\Om) \to \Dom(\Box_{p,q})
\]
satsifies $\Box_{p,q} N_{p,q} = I - H_{p,q}$ where $H_{p,q} :  L^2_{p,q}(\Om) \to \opH_{p,q}(\Om)$ is the orthogonal projection.

We denote the Bergman projection by $P_{p,q}$. 
Also, when the $\dbar$-Neumann operator
exists,  for any $(p,q)$-form $u$, there is the Hodge decomposition
\[
u = \dbar\dbars N_{p,q}u + \dbars \dbar N_{p,q} u + H_{p,q}u
\]
which means the harmonic projection
\[
H_{p,q} = I - \dbar\dbars N_{p,q} - \dbars\dbar N_{p,q}.
\]

For any smooth defining function $\rho$, the \emph{Levi form} $\opL_\rho$ is the real element of $\Lambda^{1,1}(\bdy\Om)$ 
defined by
\[
\opL_\rho(i\bar L \wedge L') = i\p\dbar\rho(i \bar L \wedge L') = \partial\rho([L,\bar L'])
\]
where the second equality follows from the Cartan identity. As usual, if $\tilde\rho$ is another smooth defining function, then
$\tilde \rho = h \rho$ for some nonvanishing smooth function $h$ and $\opL_{\tilde \rho} = h \opL_\rho$. We typically suppress
the subscript $\rho$ whenever the choice of defining function is not relevant.

A domain $\Om\subset M$ is called \emph{pseudoconvex} if the Levi form is semi-definite on $\bdy\Om$. In the case that
$\opL$ is negative semi-definite, we simply reverse the orientation of $M$ and we may therefore assume that $\opL$ is
positive semi-definite on $\bdy\Om$.  

%
%
\section{Controlling Derivatives with $\dbar$ and $\dbars$}\label{sec:derivatives}

The next lemma establishes estimates for ``benign derivatives", see, e.g., \cite[Lemma 5.6]{Str10}.
\begin{lemma} \label{lem:controlling Xu, Yu}
	Let $M$ be a complex manifold and $\Om\subset\subset  M$ be a smooth, pseudoconvex domain. Then for any 
	$s\in \mathbb N$ and $u\in C^\infty_{p,q}(\overline\Om)\cap \T{Dom}(\dib^*)$: \\
	(i) If $X$  be an $(1,0)$ vector field, smooth on $\bar\Om$ then
	\begin{equation}\label{eqn:bar X}
		\no{\bar Xu}^2_{s-1}\le c_s\left(\no{\dib u}^2_{s-1}+\no{\dib^* u}^2_{s-1}+\no{ u}^2_{s-1}\right).
	\end{equation}
	(ii) If $Y$  be an $(1,0)$ vector field, smooth on $\bar\Om$ with $Y\rho=0$ on $\bdy\Om$ then
	\begin{equation}\label{eqn:Y}
		\no{Yu}^2_{s-1}\le c_s\left(\no{\dib u}^2_{s-1}+\no{\dib^* u}^2_{s-1}+\no{ u}_{s-1}\no{u}_{s}\right).
	\end{equation} 	
		(iii) If $Z$  be a  vector field, smooth on $\bar\Om$ with $Z|_{\bdy\Om}=0$  then
	\begin{equation}\label{eqn:Z}
	\no{Zu}^2_{s-1}\le c_s\left(\no{\dib u}^2_{s-1}+\no{\dib^* u}^2_{s-1}+\no{ u}_{s-1}^2\right).
	\end{equation} 	
\end{lemma}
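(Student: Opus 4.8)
\textbf{Proof plan for Lemma~\ref{lem:controlling Xu, Yu}.}
The plan is to reduce everything to the basic (Morrey--Kohn--H\"ormander) identity on $\Om$, which for pseudoconvex $\Om$ gives, for $u\in C^\infty_{p,q}(\bar\Om)\cap\T{Dom}(\dib^*)$,
\[
\sum_{I,K}\sum_{j}\no{\bar\om_j\,(u_{I\cdot K}\text{-components})}_0^2 + (\text{Levi term}) \les \no{\dib u}_0^2+\no{\dib^* u}_0^2+\no{u}_0^2,
\]
i.e.\ all \emph{barred} first derivatives of the coefficients of $u$ are controlled in $L^2$ by $\no{\dib u}_0^2+\no{\dib^* u}_0^2+\no{u}_0^2$. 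For part (i), I would first prove the case $s=1$: write $\bar X u$ in terms of the orthonormal coframe, so that $\bar X u = \sum_j \bar a_j (\bar\om_j\lrcorner\text{-type derivative})u + (\text{zero-order})$, and the barred derivatives on the right are exactly what the basic estimate controls; the zero-order commutator terms are absorbed into $\no{u}_0^2$. For general $s$, I would apply tangential (or full, since $\bar X$ is benign) derivatives $D^\beta$ with $|\beta|\le s-1$ to the equation, commute $D^\beta$ past $\dib,\dib^*$ and $\bar X$ picking up lower-order error terms, and apply the $s=1$ estimate to $D^\beta u$; the commutators $[D^\beta,\dib]$, $[D^\beta,\dib^*]$, $[D^\beta,\bar X]$ are of order $\le|\beta|$ and contribute $\no{u}_{s-1}^2$, and one must check that $D^\beta u$ still (essentially) lies in $\T{Dom}(\dib^*)$ up to an error supported near $\bdy\Om$ that is itself $O(\no{u}_{s-1})$ in the relevant norm — this is handled by the standard device of writing $D^\beta = (\text{tangential part}) + (\text{normal part})\cdot(\text{cutoff})$ and using that the normal derivative of a form in $\T{Dom}(\dib^*)$ is benign by the boundary condition.

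For part (ii), the point is that a $(1,0)$ field $Y$ with $Y\rho=0$ on $\bdy\Om$ is tangential to $\bdy\Om$, so $Yu$ can be estimated using \emph{tangential} Sobolev norms plus one barred derivative: decompose $Y = \sum_j b_j \om_j$ in the orthonormal frame; the combination $\sum_j b_j \om_j$ annihilates $\rho$ on $\bdy\Om$, so modulo a term that vanishes on $\bdy\Om$ (hence is covered by part (iii)) one may integrate by parts in the good (tangential) directions. Concretely, for $s=1$ I would write $\no{Yu}_0^2 = (Yu,Yu)_0$ and integrate by parts to move $Y$ onto the other factor, producing $(u, \bar Y Y u)_0$ plus boundary terms; the boundary term vanishes (or is lower order) because $Y$ is tangential, and $\bar Y$ on the right is benign by the basic estimate. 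Controlling $\bar Y Y u$ then costs one full derivative on $u$, which is the source of the $\no{u}_{s-1}\no{u}_s$ term (via \eqref{eqn:sc-lc} one keeps it as a product rather than squaring). The induction on $s$ proceeds as in (i), with the extra care that $D^\beta$ must be chosen tangential so that $D^\beta u$ remains in $\T{Dom}(\dib^*)$ on the nose; the non-tangential part of any derivative is converted via the defining function into a term of type (iii).

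For part (iii), $Z$ vanishes identically on $\bdy\Om$, so $Z = \rho \tilde Z$ for a smooth vector field $\tilde Z$ on $\bar\Om$. Then $Zu = \rho\,\tilde Z u$, and since the $\dib$-Neumann problem satisfies \emph{elliptic} estimates on forms with coefficients vanishing on the boundary (equivalently, multiplication by $\rho$ gains a derivative back from the full Laplacian), one gets $\no{Zu}_{s-1} = \no{\rho\tilde Z u}_{s-1} \les \no{\tilde Z u}_{s-2}^{(\text{with a }\rho)} $ and then closes up using the interior elliptic estimate $\no{v}_{s}\les \no{\Box v}_{s-2}+\no{v}_{s-1}$ applied with the $\rho$-weight, together with $\Box u = \dib\dib^* u + \dib^*\dib u$ on $\Om$ expressed via $\dib u,\dib^* u$. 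The cleanest route: prove $s=1$ directly — $\no{Zu}_0^2=\no{\rho\tilde Zu}_0^2$, integrate by parts twice, each derivative landing on $\rho$ or producing a barred/benign derivative, so that only $\no{\dib u}_0^2+\no{\dib^*u}_0^2+\no{u}_0^2$ survives — then induct, noting that $D^\beta(\rho \tilde Z u)$ still has a factor of $\rho$ up to terms $D^{\beta'}\!\rho\cdot(\ldots)$ with $|\beta'|\ge1$, i.e.\ with one fewer $\tilde Z$-derivative, so the induction hypothesis applies.

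\textbf{Main obstacle.} The genuine difficulty, and where I would spend most of the write-up, is the bookkeeping in the induction on $s$: keeping $D^\beta u$ inside (a controlled perturbation of) $\T{Dom}(\dib^*)$ so that the basic estimate is applicable, and tracking that every commutator $[\,D^\beta,\dib\,]$, $[\,D^\beta,\dib^*\,]$, $[\,D^\beta, X\,]$ and the derivatives falling on the (non-constant) metric coefficients $g_{j\bar k}$ and on the Gram--Schmidt frame $\om_j$ really are of order $\le s-1$ and hence absorbable into $\no{u}_{s-1}^2$ (or, in case (ii), into $\no{u}_{s-1}\no{u}_s$ via \eqref{eqn:sc-lc}). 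On a general complex manifold one does not have constant-coefficient operators, so these error terms are ubiquitous; the resolution is the standard one — work in the fixed good coordinate/frame charts in which the Sobolev norms were defined, and note that all structure functions are smooth on $\bar\Om$, so each extra derivative that lands on them is harmless.
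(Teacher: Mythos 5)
Your sketches of parts (i) and (ii) reproduce the standard arguments, which the paper simply cites from \cite{BoSt91,Str10,Har11}; no comparison is needed there. For part (iii), the governing idea — exploit the vanishing of $Z$ (equivalently, of $\rho$) on $\bdy\Om$ to integrate by parts and land on barred/benign derivatives — is shared, but the organization differs in a way worth noting. You factor $Z=\rho\tilde Z$ and then either integrate by parts twice against $\rho^2$, or invoke the interior elliptic estimate for $\Box$. The paper instead first reformulates (iii) as the equivalent statement
\[
\|v\|_s^2\le c_s\big(\|\dbar v\|_{s-1}^2+\|\dbars v\|_{s-1}^2+\|v\|_{s-1}^2\big),
\qquad v\in (C^\infty_{p,q})_0(\bar\Om),
\]
for forms $v$ vanishing on $\bdy\Om$ (in effect $v=\rho u$), then invokes \cite[Lemma 2.2]{Str10} to trade normal derivatives for tangential ones, and writes the remaining top-order operator as $D^\gamma Y$ with $D^\gamma$ tangential of order $s-1$ and $Y$ a $(1,0)$ vector field. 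A single integration by parts then transfers $Y$ to its formal adjoint $Y^*\approx -\bar Y$, the boundary term dying because $v|_{\bdy\Om}=0$; $\|Y^*v\|_{s-1}$ is controlled by part (i), and the commutator $[Y^*,Y]$ produces an $O(\|v\|_{s-1}\|v\|_s)$ term absorbed by \eqref{eqn:sc-lc}. No mention of $\Box$ is needed.

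Two cautions about your write-up of (iii). First, your ``$\Box$-route'' is circular as stated: the interior estimate $\|v\|_s\les \|\Box v\|_{s-2}+\|v\|_{s-1}$ still has $\Box v$ at total order $s$, and converting $\|\Box v\|_{s-2}$ into $\|\dbar v\|_{s-1}, \|\dbars v\|_{s-1}$ requires one more integration by parts, at which point you have silently returned to the other argument. Second, in your double-integration-by-parts route the commutator $[\bar L,L]$ that appears after the second step has a normal component, so the term $(u,\rho^2[\bar L,L]u)$ is not immediately $O(\|u\|_0^2)$; you need the extra observation that $\rho u$ vanishes on $\bdy\Om$ (so a divergence-type integration by parts can be done once more on this remainder) to close the estimate. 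The paper's reduction to forms vanishing on the boundary makes this bookkeeping automatic, which is the main thing you would gain by adopting its formulation.
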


\begin{proof}
The proofs of (i) and (ii) are standard, and the proofs for domains in $\C^n$ apply here as well. See, e.g., \cite{BoSt91,Str10,Har11}.
To prove (iii), we observe that it is equivalent to prove 
	\[
	\|v\|_s^2 \leq c_s \big(\|\dbar v\|_{s-1}^2 + \|\dbars v\|_{s-1}^2 + \|v\|_{s-1}^2\big)
	\]
	for $v \in (C^\infty_{p,q})_0(\bar\Om)$, that is, $(p,q)$-forms $v$ that vanish at the boundary. Moreover, as normal derivatives can be written in terms of coefficients
	of $\dbar$, $\dbars$, tangential derivatives, and the coefficients of the form itself (\cite[Lemma 2.2]{Str10} and as this result is local, it applies in the more general complex manifold setting), we may assume
	that all differential operators in this proof are tangential at the boundary. Additionally, if we have an $s$-order operator $D^\gamma Y$ where
	$|\gamma|=s-1$, we may further assume that $Y$ is a type $(1,0)$-vector field, since we already established the 
	desired result for $(0,1)$-vector fields is part (i). 
	Consequently,
	\begin{align*}
	(D^\gamma Y v, D^\gamma Y v) 
	&\leq (D^\gamma Y^*v, D^\gamma Y^*v) + C_s \|v\|_{s-1}\|Y v\|_{s-1} + C_s \|v\|_{s-1}\|Y^* v\|_{s-1} + (D^\gamma v, D^\gamma[Y^*,Y] v) \\
	&\leq \|Y^*v\|_{s-1}^2 + C_s \|v\|_{s-1}\|Y v\|_{s-1} + C_s \|v\|_{s-1}\|Y^* v\|_{s-1} + \|v\|_{s-1}\|\|v\|_s
	\end{align*}
	The bounds for the first term follow from (i) and the remaining terms from a \eqref{eqn:sc-lc} argument (and a reabsorption of
	$\|Y' v\|_{s-1}$ and $\|v\|_s$).
\end{proof}

\begin{lemma}\label{lmm:Hs Ts}
	Let $T_\epsilon$ be a purely imaginary vector field, smooth on $\bar\Om$ with $C^{-1}\le |\gamma(T_\epsilon)|<C$ on $\bdy\Om$. If $s\geq 1$ and $u \in H^s_{p,q}(\Om)\cap\Dom(\dbars)$, 
	then
	\begin{equation}\label{eqn: T esilon 1}
		\no{u}^2_{s}\le c_{\epsilon,s}\left(\no{\dib u}^2_{s-1}+\no{\dib^* u}^2_{s-1}+\no{ u}^2_{s-1}\right)+c_s\no{T_\epsilon^s u}_0^2,
	\end{equation}
	and if $s \geq 2$ and $u\in H^s_{p,q}(\Om)\cap\Dom(\Box)$, then
	\begin{equation}\label{eqn:T esilon 2}
		\no{u}^2_{s}\le c_{\epsilon,s}\left(\no{\Box u}^2_{s-2}+\no{ u}^2_{s-1}\right)+c_s\no{T_\epsilon^s u}_0^2.
	\end{equation} 	
\end{lemma}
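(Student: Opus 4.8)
The plan is to establish \eqref{eqn: T esilon 1} first and to deduce \eqref{eqn:T esilon 2} from it. The estimate \eqref{eqn: T esilon 1} is the standard principle that every top-order derivative of a form in $\T{Dom}(\dbars)$ is either controlled by $\dbar$, $\dbars$ and lower-order terms, or else is expressible through one distinguished ``bad'' vector field; the only new point is that $T_\epsilon$ may serve as that bad field precisely because $|\gamma(T_\epsilon)|$ is bounded above and below on $\bdy\Om$. First I would localize: in the interior the $\dbar$-complex is elliptic, so there $\|u\|_s^2$ is dominated by $\|\dbar u\|_{s-1}^2+\|\dbars u\|_{s-1}^2+\|u\|_{s-1}^2$ and it suffices to work in a boundary coordinate patch. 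There, run Gram--Schmidt as in Section~\ref{sec:prelims} to get orthonormal $(1,0)$-forms $\om_1,\dots,\om_n$ with $\om_n$ proportional to $\p\rho$ near $\bdy\Om$, and let $L_1,\dots,L_n$ be the dual $(1,0)$-fields; then $L_1,\dots,L_{n-1}$ and $\bar L_1,\dots,\bar L_{n-1}$ are complex tangential, $\nu:=L_n+\bar L_n$ is a nonvanishing multiple of the real normal, and $T_0:=L_n-\bar L_n$ is a purely imaginary tangential field with $\gamma(T_0)$ a nonzero constant on $\bdy\Om$, which we rescale to $\gamma(T_0)=1$ on $\bdy\Om$. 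Since $\{\Re L_j,\Imm L_j\}_{j=1}^{n}$ span the real tangent bundle of $M$ near $\bdy\Om$, $\|u\|_s^2\le\|u\|_{s-1}^2+C\sum_j(\|\Re L_j u\|_{s-1}^2+\|\Imm L_j u\|_{s-1}^2)$. For $j<n$ the fields $L_j,\bar L_j$ are handled by Lemma~\ref{lem:controlling Xu, Yu}(i),(ii) (the product $\|u\|_{s-1}\|u\|_s$ from (ii) is deferred to a final \eqref{eqn:sc-lc} reabsorption); the normal direction $\nu$ is handled by the reduction of normal derivatives to tangential derivatives, coefficients of $\dbar u$ and $\dbars u$, and zeroth-order terms in $u$ (\cite[Lemma 2.2]{Str10}, which is local and hence valid on $M$); and fields vanishing on $\bdy\Om$ are handled by Lemma~\ref{lem:controlling Xu, Yu}(iii). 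Carrying this reduction through all $s$ derivatives---commuting benign fields and $\nu$ toward the innermost slot, each commutator being a differential operator of order $\le s-1$ hence bounded by $\|u\|_{s-1}$, and using that tangential differentiation preserves the $\T{Dom}(\dbars)$ condition up to zeroth-order terms in $u$---the only surviving uncontrolled terms are the pure powers $T_0^s u$. After \eqref{eqn:sc-lc} and reabsorption of $\tfrac12\|u\|_s^2$ this yields $\|u\|_s^2\le c_s(\|\dbar u\|_{s-1}^2+\|\dbars u\|_{s-1}^2+\|u\|_{s-1}^2)+c_s\|T_0^s u\|_0^2$.

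Next I would trade $T_0$ for $T_\epsilon$. Since $C^{-1}\le|\gamma(T_\epsilon)|\le C$ on $\bdy\Om$, by continuity $|\gamma(T_\epsilon)|\ge(2C)^{-1}$ on a neighborhood of $\bdy\Om$, so $b_\epsilon:=\gamma(T_\epsilon)^{-1}$ is smooth and bounded there. Set $V_\epsilon:=T_\epsilon-\gamma(T_\epsilon)T_0$; this is a smooth purely imaginary field with $\gamma(V_\epsilon)=0$ on $\bdy\Om$. Expanding $V_\epsilon$ in the frame $\{L_j,\bar L_j\}$ and using $\gamma(L_j)=\gamma(\bar L_j)=0$ for $j<n$ while $\gamma(L_n)$ is a nonzero constant on $\bdy\Om$, one finds $V_\epsilon=\sum_{j<n}(f_j L_j+g_j\bar L_j)+a_\epsilon\nu+\beta_\epsilon T_0$ with smooth coefficients and $\beta_\epsilon|_{\bdy\Om}=0$; hence, by Lemma~\ref{lem:controlling Xu, Yu} and \cite[Lemma 2.2]{Str10}, $\|V_\epsilon u\|_{s-1}$ is dominated (with an $\epsilon$-dependent constant) by $\|\dbar u\|_{s-1}+\|\dbars u\|_{s-1}+\|u\|_{s-1}+\|u\|_{s-1}^{1/2}\|u\|_s^{1/2}$. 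Writing $T_0=b_\epsilon(T_\epsilon-V_\epsilon)$ and expanding $T_0^s$ noncommutatively, the leading term is $b_\epsilon^sT_\epsilon^s$ and every remaining term either carries a factor $V_\epsilon$ (moved to the innermost slot, contributing $\|V_\epsilon u\|_{s-1}$) or is a differential operator of order $\le s-1$ (contributing $\|u\|_{s-1}$). Therefore $\|T_0^s u\|_0^2\le c_s\|T_\epsilon^s u\|_0^2+c_{\epsilon,s}(\|\dbar u\|_{s-1}^2+\|\dbars u\|_{s-1}^2+\|u\|_{s-1}^2+\|u\|_{s-1}\|u\|_s)$ with $c_s$ independent of $\epsilon$; substituting into the previous display and reabsorbing the product by \eqref{eqn:sc-lc} gives \eqref{eqn: T esilon 1}.

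For \eqref{eqn:T esilon 2}, apply \eqref{eqn: T esilon 1} (legitimate since $\T{Dom}(\Box)\subset\T{Dom}(\dbars)$) and then bound $\|\dbar u\|_{s-1}^2+\|\dbars u\|_{s-1}^2$ by integration by parts. First reduce the normal derivatives of $\dbar u$ and $\dbars u$ to tangential ones by \cite[Lemma 2.2]{Str10}, using $\dbar u\in\T{Dom}(\dbars)$ and $\dbars u\in\T{Dom}(\dbar)$ to discard the boundary terms; then for a tangential operator $D^\gamma$ with $|\gamma|=s-1$, integrating $D^\gamma$ by parts (no boundary term, as $D^\gamma$ is tangential) and moving $\dbar$, $\dbars$ across their adjoints gives
\[
\|D^\gamma\dbar u\|_0^2+\|D^\gamma\dbars u\|_0^2=\big(\Box u,\,(D^\gamma)^*D^\gamma u\big)+(\text{commutator terms}),
\]
where the commutators of the order-$(2s-2)$ operator $(D^\gamma)^*D^\gamma$ with $\dbar$, $\dbars$ have order $\le 2s-2$, so paired with $\dbar u$ or $\dbars u\in H^{s-1}$ they are $\lesssim\|u\|_{s-1}\|u\|_s$, while $|(\Box u,(D^\gamma)^*D^\gamma u)|\le\|\Box u\|_{s-2}\|u\|_s$. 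Summing over $\gamma$ and adding the lower-order contribution yields $\|\dbar u\|_{s-1}^2+\|\dbars u\|_{s-1}^2\le c_s(\|\Box u\|_{s-2}\|u\|_s+\|u\|_{s-1}\|u\|_s+\|u\|_{s-1}^2)$; inserting this into \eqref{eqn: T esilon 1}, applying \eqref{eqn:sc-lc} to the two products, and reabsorbing $\tfrac12\|u\|_s^2$ produces \eqref{eqn:T esilon 2}.

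I expect the main obstacle to be the commutator and boundary-term bookkeeping of the first two paragraphs: checking that every error term really is of order $\le s-1$ in $u$ or a benign first-order derivative, that tangential differentiation perturbs $\T{Dom}(\dbars)$ only by zeroth-order terms, and that each reabsorption is legitimate with the correct small constant. All of this is classical over $\C^n$; the manifold setting is already absorbed into Lemma~\ref{lem:controlling Xu, Yu} and the locality of \cite[Lemma 2.2]{Str10}, and the genuinely new ingredient is the elementary comparison of $T_\epsilon$ with the canonical field $T_0$ afforded by $C^{-1}\le|\gamma(T_\epsilon)|\le C$.
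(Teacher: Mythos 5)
Your proof is correct, and it follows the same overall strategy as the paper: localize to a boundary chart where interior ellipticity has already been used, express the missing derivative in terms of benign fields (handled by Lemma~\ref{lem:controlling Xu, Yu} and \cite[Lemma~2.2]{Str10}) plus a distinguished transverse field, and then integrate by parts against $\Box u$ for the $s\ge 2$ case. Two places where your execution differs from the paper's are worth noting. First, for \eqref{eqn: T esilon 1} you explicitly introduce the canonical field $T_0$ with $\gamma(T_0)=1$ and write $T_\eps = \gamma(T_\eps)T_0 + V_\eps$, showing $V_\eps$ is a benign combination of tangential $(1,0)$ and $(0,1)$ fields, a normal multiple, and a multiple of $T_0$ vanishing on $b\Om$; the paper sets $iY_n=T$ in its boundary chart and passes from $\|Y_n u\|_{s-1}$ to $\|T_\eps u\|_{s-1}$ without spelling out this comparison, so in this respect your argument supplies a detail the paper omits. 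The crucial point that the constant multiplying $\|T_\eps^s u\|_0^2$ is $\eps$-independent follows because $b_\eps = \gamma(T_\eps)^{-1}$ is uniformly bounded, which you correctly observe. Second, for \eqref{eqn:T esilon 2} the paper first re-applies \eqref{eqn: T esilon 1} to $\dbar u$ and $\dbars u$ (at level $s-1$), then separately estimates $\|T_\eps^{s-1}\dbar u\|_0^2$, $\|T_\eps^{s-1}\dbars u\|_0^2$, and $\|\dbars\dbar u\|_{s-2}^2 + \|\dbar\dbars u\|_{s-2}^2$ by a chain of integrations by parts; you instead do a single integration by parts with a tangential operator $D^\gamma$ of order $s-1$, producing $(\Box u,(D^\gamma)^*D^\gamma u)$ directly. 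Your route is more streamlined and avoids introducing $T_\eps$ into the intermediate estimates. The only caveat, common to both your argument and the paper's, is that the integration by parts on the $\dbars$ slot uses (implicitly) that tangential differentiation of order $\le s-1$ preserves $\T{Dom}(\dbars)$ up to lower-order corrections — a standard fact, and your closing paragraph correctly flags this bookkeeping as the place requiring care. No gap.
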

\begin{proof}Elliptic estimates for $\dbar$ fail only near $\bdy\Om$, so it suffices, as in Lemma \ref{lem:controlling Xu, Yu}, to consider
	smooth $u$ with $\supp u$ in a special boundary neighborhood $U$ with boundary chart $\{X_1,\dots,X_n,Y_1,\dots,Y_n\}$ so that
	the real normal to $\bdy\Om\cap U$ is $X_n$, while $iY_n = T$, and the complex normal is $\frac 12(X_n - i Y_n)$. Additionally, by density, we may assume that $u \in H^{s+1}_{p,q}(\Om)\cap\Dom(\dbars)$.
	If $D$ is the gradient operator, then 
	Lemma \ref{lem:controlling Xu, Yu} implies
	\begin{align*}
		\|u\|_s^2 
		&\leq c \|D u\|_{s-1}^2
		\leq c\Big( \sum_{j=1}^{n-1}\big( \|X_j u\|_{s-1}^2 + \|Y_j u\|_{s-1}^2\big) + \|X_n u\|_{s-1}^2 + \|Y_n u\|_{s-1}^2 \Big)\\
		&\leq c_s \sum_{j=1}^{n-1}\big( \|X_j u\|_{s-1}^2 + \|Y_j u\|_{s-1}^2\big) + c_s \|\dbar u\|_{s-1}^2 + c_s\|\dbars u\|_{s-1}^2 + c\|T_\eps u\|_{s-1}^2 
		+ c_{s} \|u\|_{s} \|u\|_{s-1}
	\end{align*}
A \eqref{eqn:sc-lc} argument and an absorbtion of $\|u\|_s^2$ by the LHS, we estimate
\[
\|u\|_s^2  \leq c_s\left(\no{\dib u}^2_{s-1}+\no{\dib^* u}^2_{s-1}+ c_{\eps,s}\no{ u}^2_{s-1}\right) + c\|T_\eps u\|_{s-1}^2.
\]

	By a simple induction argument (essentially repeating the argument of the preceeding paragraph), we may bound
	\[
	\|T_\eps u\|_{s-1}^2 \leq c_s\big(\|T_\eps^s u\|_0^2 + \|\dbar u\|_{s-1}^2 + \|\dbars u\|_{s-1}^2\big) + c_{s,\eps}\|u\|_{s-1}^2
	\]
	which establishes \eqref{eqn: T esilon 1}.
	
	Next, applying \eqref{eqn: T esilon 1} to $\|\dbar u\|_{s-1}^2$ and $\|\dbars u\|_{s-1}^2$, we see
	\[
	\|\dbar u\|_{s-1}^2 \leq c_{\eps,s}\big(\|\dbars\dbar u\|_{s-2}^2 + \|\dbar u\|_{s-2}^2\big) + \|T^{s-1}_\eps \dbar u\|_0^2
	\]
	and
	\[
	\|\dbars u\|_{s-1}^2 \leq c_{\eps,s}\big(\|\dbar\dbars u\|_{s-2}^2 + \|\dbars u\|_{s-2}^2\big) + \|T^{s-1}_\eps \dbars u\|_0^2.
	\]
	Since $u\in\Dom(\Box)$, we compute (if $s\geq 2$)
	\begin{align*}
		\|T^{s-1}_\eps \dbar u\|_0^2
		&= \big(T^{s-1}_\eps \dbars\dbar u, T^{s-1}_\eps u \big) + \big([\dbars,T^{s-1}_\eps]\dbar u,T^{s-1}_\eps u\big) + \big(T^{s-1}\dbar u,[T^{s-1}_\eps,\dbar] u\big) \\
		&= \big(T^{s-2}_\eps \dbars\dbar u, T^s_\eps u \big) + \big([\dbars,T^{s-1}_\eps]\dbar u,T^{s-1}_\eps u\big) + \big(T^{s-1}\dbar u,[T^{s-1}_\eps,\dbar] u\big) \\
		&\leq c_{\eps, s}\|\dbars\dbar u\|_{s-2}\|T^s_\eps u\|_0 + c_{\eps,s} \|\dbar u\|_{s-1}\| \|u\|_{s-1}.
	\end{align*}
	A \eqref{eqn:sc-lc} argument allows for the absorption of $ \|\dbar u\|_{s-1}$ and the following estimation of $\|\dbars\dbar u\|_{s-2}$.
	The estimate for $\|T^{s-1}_\eps \dbars u\|_0^2$ is identical, leaving us to estimate
	$\|\dbars\dbar u\|_{s-2}^2 + \|\dbar\dbars u\|_{s-2}^2$. Let $X_s$ be a derivative of order $s-2$. Then since $u\in\Dom(\Box)$,

\begin{align*}
&\big( X_s \dbars\dbar u, X_s \dbars\dbar u\big) + \big( X_s \dbar\dbars u, X_s \dbar\dbars u\big) \\
&= \Rre\Big\{\big( X_s \Box u, X_s \dbars\dbar u\big) + \big( X_s \Box u, X_s \dbar\dbars u\big) 
-\big( X_s \dbar\dbars u, X_s \dbars\dbar u\big)  -\big( X_s \dbars\dbar u, X_s \dbar\dbars u\big) \Big\} \\
&=\big( X_s \Box u, X_s \Box u\big) -2\Rre\Big\{ \big( X_s \dbar\dbars u, X_s \dbars\dbar u\big)  \Big\} \\
&= \|X_s \Box u\|_0^2 -2\Rre\Big\{ \big( X_s \dbar\dbars u, [X_s, \dbars]\dbar u\big)  + \big( [\dbar,X_s] \dbar\dbars u, X_s\dbar u\big) \Big\} \\
&\leq\|X_s \Box u\|_0^2 + C\big(\|X_s \dbar\dbars u\|_0 \|u\|_{s-1} + \|\dbar\dbars u\|_{s-2}\|u\|_{s-1}\big).
\end{align*}

The $H^{s-2}$-norm is built from derivatives of the form $X_s$. This fact and a \eqref{eqn:sc-lc} argument suffices to prove
\eqref{eqn:T esilon 2}.
\end{proof}

\begin{lemma}\label{lmm:commutators with Ts}
Let $T_\epsilon$ be a purely imaginary vector field, smooth on $\bar\Om$ with $C^{-1}\le |\gamma(T_\epsilon)|<C$ on $\bdy\Om$.
Then 
	\begin{equation}\label{eqn: T epsilon s}T_\eps^s=h_\eps^s T^s+\mathcal Z_{\eps,s}
		\end{equation}
	\begin{equation}\label{eqn: T epsilon wedge}
[\dib, T^s_\epsilon]=-s h_\epsilon^{-1} \bar\alpha_\epsilon\we T_\epsilon^s+\mathcal X_{\epsilon,s}
	\end{equation}
	and
\begin{equation}\label{eqn:T esilon conrner}
[\dib^*, T^s_\epsilon]=s h_\epsilon^{-1} \alpha_\eps \# T_\epsilon^s+\mathcal Y_{\epsilon,s}
\end{equation} 	
where $h_\epsilon=\gamma(T_\epsilon)\in \R$ on $\bdy\Om$, 
and $\mathcal X_{\epsilon,s}:C^\infty_{p,q}(\bar\Om)\to C^\infty_{p,q+1}(\bar\Om)$, 
$\mathcal Y_{\epsilon,s}:C^\infty_{p,q}(\bar\Om)\to C^\infty_{p,q-1}(\bar\Om)$, and 
$\mathcal Z_{\epsilon,s}:C^\infty_{p,q}(\bar\Om)\to C^\infty_{p,q}(\bar\Om)$ are operators satisfying 
\begin{equation}\label{eqn:X,Y,Z ests}
\|\mathcal X_{\epsilon,s}u\|_0^2+ \|\mathcal Y_{\epsilon,s}u\|_0^2
+\|\mathcal Z_{\epsilon,s}u\|_0^2\le c_{\epsilon,s}\left(\no{\dib u}^2_{s-1}+\no{\dib^* u}^2_{s-1}+\no{u}^2_{s-1}+\no{u}_{s-1}\no{u}_{s}\right) 
\end{equation}
for $u \in C^\infty_{p,q}(\Om)\cap\Dom(\dbars)$.
\end{lemma}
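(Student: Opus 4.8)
The plan is to compute the commutators directly in a special boundary chart and track which error terms can be controlled by Lemma~\ref{lem:controlling Xu, Yu} and Lemma~\ref{lmm:Hs Ts}. Since all three claimed identities are local in nature and the estimate \eqref{eqn:X,Y,Z ests} involves only Sobolev norms that are defined via the good coordinate charts, I would fix a special boundary neighborhood $U$ with a unitary frame $\{X_1,\dots,X_n,Y_1,\dots,Y_n\}$ arranged so that $X_n$ is the real normal, $iY_n = T$ is the purely imaginary normal direction dual to $\gamma$ (so $\gamma(T)=1$ near $\bdy\Om$), and $\frac12(X_n - iY_n)$ is the complex normal. In such a chart $T_\epsilon = h_\epsilon T + (\text{tangential and normal first-order operators with coefficients vanishing to appropriate order})$; more precisely, writing $T_\epsilon$ in terms of the frame and using $\gamma(T_\epsilon) = h_\epsilon$, the discrepancy $T_\epsilon - h_\epsilon T$ is a vector field $Z_\epsilon$ that either vanishes on $\bdy\Om$ or is of type covered by parts (i)--(iii) of Lemma~\ref{lem:controlling Xu, Yu}.

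First I would establish \eqref{eqn: T epsilon s}: expanding $T_\epsilon^s = (h_\epsilon T + Z_\epsilon)^s$ and commuting all the $h_\epsilon$ factors and $Z_\epsilon$ operators to the appropriate side, every term other than $h_\epsilon^s T^s$ is an $s$-th order operator which, after moving derivatives around, is a sum of terms each containing at least one factor that is either a $(0,1)$-vector field, a $(1,0)$-vector field $Y$ with $Y\rho = 0$ on $\bdy\Om$, a vector field vanishing on $\bdy\Om$, or a bounded zeroth-order factor times a lower-order operator. Each such term is then estimated by \eqref{eqn:bar X}, \eqref{eqn:Y}, \eqref{eqn:Z} applied at the appropriate order, producing exactly the right-hand side of \eqref{eqn:X,Y,Z ests}. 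The key point is that the ``dangerous'' direction $T$ itself never appears in $\mathcal Z_{\epsilon,s}$ to full order $s$ — it always comes paired with a benign factor — so no $\|T_\epsilon^s u\|_0$ term is needed on the right.

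Next I would compute $[\dbar, T_\epsilon^s]$ and $[\dbars, T_\epsilon^s]$. Using \eqref{eqn: T epsilon s}, it suffices to compute $[\dbar, h_\epsilon^s T^s]$ modulo lower-order/benign errors, and then $[\dbar, h_\epsilon^s T^s] = h_\epsilon^s[\dbar, T^s] + (\dbar h_\epsilon^s)\wedge T^s + (\text{commutator of }\dbar\text{ with }\mathcal Z_{\epsilon,s})$. For $[\dbar, T^s]$, I would use that $[\dbar, T] = [\dbar, iY_n]$ and that the $(0,1)$-part of this commutator, acting on forms, is governed by the structure of the metric and the Levi form — specifically, on $\bdy\Om$ the relevant leading piece is $-\bar\alpha_\epsilon \wedge (\cdot)$ where $\alpha_\epsilon$ is the $(1,0)$-part of $-\Lie_{T_\epsilon}(\gamma)$, by the same Cartan-formula computation carried out in the introduction when checking the $\C^n$ case. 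Iterating $[\dbar, T^s] = \sum_{j} T^j [\dbar,T] T^{s-1-j}$ and commuting the $T$'s past $[\dbar,T]$ at the cost of benign errors gives the factor $s$ and the form $-s h_\epsilon^{-1}\bar\alpha_\epsilon \wedge T_\epsilon^s + \mathcal X_{\epsilon,s}$. The term $(\dbar h_\epsilon^s)\wedge T^s$ is absorbed into $\mathcal X_{\epsilon,s}$ because $\dbar h_\epsilon^s$ is a bounded coefficient and so $\|(\dbar h_\epsilon^s)\wedge T^s u\|_0 \lesssim \|T^s u\|_0$ — wait, this needs care: a bare $\|T^s u\|_0$ is not allowed in \eqref{eqn:X,Y,Z ests}. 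The resolution is that we only need $[\dbar, T_\epsilon^s]$ modulo terms that, when the commutator is used inside the energy estimates of later sections, get paired against things that convert $T^s$ back to controllable quantities; here, though, the cleanest route is to note that the coefficient of $\dbar h_\epsilon^s$ is a $(0,1)$-form, so $(\dbar h_\epsilon^s)\wedge T^s u$ has the $(0,1)$-covector factored out and one can instead write it using $\bar X$-type derivatives via \eqref{eqn:bar X}, or more simply absorb it by observing $T^s u = h_\epsilon^{-s}T_\epsilon^s u - h_\epsilon^{-s}\mathcal Z_{\epsilon,s}u$ and that $(\dbar h_\epsilon^s)\wedge T_\epsilon^s u$ is genuinely part of the principal term which can be folded into the $s h_\epsilon^{-1}\bar\alpha_\epsilon\wedge T_\epsilon^s$ piece by adjusting the definition of $\alpha_\epsilon$ appropriately — consistent with the introduction's computation where $-\partial h_\epsilon$ appears and is shown to drop out after applying $\#$ on $\Dom(\dbars)$.

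The adjoint identity \eqref{eqn:T esilon conrner} is then obtained by a parallel computation, or by taking formal adjoints of \eqref{eqn: T epsilon wedge} and correcting by the boundary term coming from integration by parts; since $T_\epsilon$ is purely imaginary, $T_\epsilon^* = -T_\epsilon + (\text{zeroth order})$, and $(\bar\alpha_\epsilon\wedge\cdot)^* = \alpha_\epsilon\#\cdot$ by definition of $\#$, which produces the sign change and the $\#$ in place of $\wedge$. All new error terms generated by this adjoint passage (commutators of $\dbars$ with cutoffs, the zeroth-order part of $T_\epsilon^*$, and boundary contributions which vanish because elements of $\Dom(\dbars)$ satisfy the free boundary condition) are of the benign type and go into $\mathcal Y_{\epsilon,s}$. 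I expect the main obstacle to be bookkeeping: precisely identifying, in the expansion of the iterated commutators, which factors are ``benign'' in the sense of Lemma~\ref{lem:controlling Xu, Yu} — in particular keeping track of the fact that a normal derivative $X_n$ can always be re-expressed via $\dbar$, $\dbars$, tangential derivatives, and $T$, and then the $T$-part recombined — and verifying that the leading term is genuinely $\mp s h_\epsilon^{-1}(\bar\alpha_\epsilon\wedge\,/\,\alpha_\epsilon\#)\,T_\epsilon^s$ with the stated constant and sign, matching the Cartan-formula identity for $\alpha_\epsilon$ already recorded in the introduction. Getting the dependence of constants right ($c_{\epsilon,s}$ versus $c_s$) requires noting that $h_\epsilon$ and its derivatives are only bounded \emph{uniformly in} $\epsilon$ from below on $\bdy\Om$, so derivative bounds on $h_\epsilon$ carry $\epsilon$-dependence, which is why the error estimate carries $c_{\epsilon,s}$ while the principal commutator term does not.
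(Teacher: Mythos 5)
Your overall strategy --- work in a special boundary frame, use the Cartan formula to identify the leading commutator, estimate benign errors by Lemma~\ref{lem:controlling Xu, Yu}, and get \eqref{eqn:T esilon conrner} by adjointness --- is aligned with the paper's, and your decomposition $T_\eps = h_\eps T + \mathcal Z_\eps$ and the reduction to $s=1$ for the benign-error bookkeeping are the same. But the route you take to the leading term of $[\dbar, T_\eps^s]$ contains a genuine gap that you yourself flag but do not actually close.

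You propose computing $[\dbar, T_\eps^s]$ by first reducing to $[\dbar, h_\eps^s T^s]$ via \eqref{eqn: T epsilon s}, and then expanding $[\dbar, h_\eps^s T^s] = h_\eps^s[\dbar, T^s] + (\dbar h_\eps^s)\wedge T^s + \cdots$. This creates two problems simultaneously. First, the leading term of $[\dbar, T^s]$ is governed by the Lie derivative of $\gamma$ with respect to $T$, not $T_\eps$; but $\alpha_\eps$ is defined as the $(1,0)$-part of $-\Lie_{T_\eps}(\gamma)$, and $\Lie_{h_\eps T}\gamma \neq h_\eps \Lie_T\gamma$ (the Cartan formula gives $\Lie_{h_\eps T}\gamma = h_\eps\,\iota_T(d\gamma) + d(h_\eps\gamma(T))$, so the discrepancy is exactly a $dh_\eps$ term). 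Second, the remainder $(\dbar h_\eps^s)\wedge T^s u$ is a full-order-$s$ $T$-derivative with a non-benign zeroth-order multiplier, which is not bounded by the right-hand side of \eqref{eqn:X,Y,Z ests}, and you cannot ``adjust the definition of $\alpha_\eps$'' since $\alpha_\eps$ is fixed by the hypotheses of Theorems~\ref{thm:Khanh-Andy1} and \ref{thm:Khanh-Andy2}. These two issues in fact are meant to cancel each other (as the $\C^n$ computation in the introduction shows), but your proposal never verifies the cancellation at the operator level, and with the frame-dependent $\mathcal Z_{\eps,s}$ floating around, it is not clear the cancellation persists modulo benign errors.

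The paper's proof avoids both problems by \emph{not} splitting $T_\eps$ before commuting. Instead, it computes $[T_\eps, L_j]$ directly from the defining property of $\alpha_\eps$: since $\alpha_{j,\eps} = \alpha_\eps(L_j) = -\Lie_{T_\eps}(\gamma)(L_j) = -\big(T_\eps\,\gamma(L_j) - \gamma([T_\eps, L_j])\big) = \gamma([T_\eps, L_j])$ (using $\gamma(L_j)=0$), one reads off $[T_\eps, L_j] = h_\eps^{-1}\alpha_{j,\eps}\,T_\eps + \mathcal Z_{j,\eps}$ with $\mathcal Z_{j,\eps}$ benign. Feeding this into the local-frame expression $\dbar = \sum_j\bom_j\wedge \bar L_j + (\text{zeroth order})$, and using that $T_\eps$ is purely imaginary (so $[\bar L_j, T_\eps] = \overline{[L_j, T_\eps]}$ up to a sign), yields $[\dbar, T_\eps]u = -h_\eps^{-1}\bar\alpha_\eps\wedge T_\eps u + \mathcal X_{\eps,1}u$ with no leftover $\dbar h_\eps$ term to worry about. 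The higher $s$ then follows by the clean induction $[\dbar, T_\eps^2] = 2[\dbar, T_\eps]T_\eps + \big[T_\eps, [\dbar, T_\eps]\big]$, which keeps $T_\eps$ (not $T$) as the iterated operator throughout. I recommend you adopt this $[T_\eps, L_j]$ identity as the central step; it makes the leading term match $\alpha_\eps$ by construction rather than by a cancellation you'd otherwise have to carry out by hand.
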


\begin{remark} We comment that \eqref{eqn:T esilon conrner}  means that if
$u$ is a $(p,q)$-form, then \eqref{eqn:T esilon conrner} means that on a good, local chart
\[
s h_\epsilon^{-1} \alpha_\eps\# T_\epsilon^s u
=s h_\epsilon^{-1}\alpha_\eps\# \Big(\sum_{\atopp{I\in\I_p}{J\in\I_q}} T_\epsilon^s u_{IJ}\, \om^I\we \bom^J\Big).
\]
\end{remark}

\begin{proof}[Proof of Lemma~\ref{lmm:commutators with Ts}]  We only need to prove \eqref{eqn: T epsilon s}, \eqref{eqn: T epsilon wedge} and 
\eqref{eqn:T esilon conrner} for the case $s=1$ since the higher degrees follow by induction. For example,
\[
[\dbar,T^2_\eps] = 2[\dbar,T_\eps]T_\eps + \big[T_\eps,[\dbar,T_\eps]\big].
\]
The uniformly bounded condition of $\gamma(T_\eps)$ on $\bdy\Om$ implies that 
\[
T_\eps=h_\eps T+ \mathcal Z_\eps\quad \T{on $\bar\Om$, where } \mathcal Z_\eps=\bar X_\eps+ Y_\eps+Z_\eps.
\]
Here, $X_\eps, Y_\eps, Z_\eps$ are smooth $(1,0)$-vector fields as in Lemma \ref{lem:controlling Xu, Yu}.

By Lemma~\ref{lem:controlling Xu, Yu}, 
\[
\|\mathcal Z_\eps u\|_0^2
\le c_\eps \left(\|\dib u\|_0^2+\|\dib^* u\|_0^2+\|u\|_0^2+\|u\|_0\|u\|_1\right),
\]
for any $u\in C^\infty_{p,q}(\bar\Om)\cap \Dom(\dib^*)$, and hence \eqref{eqn: T epsilon s} for $s=1$ follows.\\
	 
For the proof of \eqref{eqn: T epsilon wedge}, we first recall that   $\alpha_\eps$ is the $(1,0)$-component of the form $-\Lie_{T_\eps}(\gamma)$ and 
$\gamma=\frac{1}{2}(\di \rho-\dib \rho)$. 
Working on a local patch of $M$, we let $\alpha_{j,\eps}$ be the $\om_j$-component of $\alpha_\eps$. 
Then
\[
\alpha_{j,\eps}
=\alpha_\eps(L_j)=-\Lie_{T_\eps}(\gamma)(L_j)
=-\big(T_\eps \gamma(L_j) -\gamma \{[T_\eps,L_j]\}\big)
=\gamma \{[T_\eps,L_j]\}.
\] 
This implies 
\[
[T_\eps,L_j]
= \alpha_{j,\eps}T+\tilde{\mathcal Z}_{j,\eps}
= h_\eps^{-1} \alpha_{j,\eps}T_\eps+\mathcal Z_{j,\eps}
\]
where $\mathcal Z_{j,\eps}$ is a vector field satisfying 
\[
\|\mathcal Z_{j,\eps} u\|_0^2\le c_\eps \big(\|\dib u\|_0^2+\|\dib^* u\|_0^2+\|u\|_0^2+\|u\|_0\|u\|_1\big).
\]

Therefore, since $T_\eps$ is purely imaginary, 
\begin{align*}
[\dib,T_\eps]u
&= \sum_{j=1}^n\big( \big[\bar L_j,T_\eps\big] \, \bom_j\big)\we u + \mathcal{X}_{\eps,0} u\\ 
&=-h^{-1}_\eps\sum_{j=1}^n \bar\alpha_{j,\eps} \, \bom_j \we T_\eps u
+ \sum_{j=1}^n \bom_j\we  \overline{\mathcal Z_{j,\eps}}  u+\mathcal{X}_{\eps,0} u\\
&=-h^{-1}_\eps \bar \alpha_\eps\we T_\eps u +\mathcal X_{\eps,1}u
\end{align*}
and similarly, by taking adjoints,
\[
[\dib^*,T_\eps]u = h^{-1}_\eps \alpha_\eps\# T_\eps u +\mathcal Y_{\eps,1}u;
\]
where $\mathcal X_{\epsilon,1}:C^\infty_{p,q}(\bar\Om)\to C^\infty_{p,q+1}(\bar\Om)$, 
$\mathcal Y_{\epsilon,1}:C^\infty_{p,q}(\bar\Om)\to C^\infty_{p,q-1}(\bar\Om)$ satisfy
\[
\|\mathcal X_{\epsilon,1}u\|_0^2+\|\mathcal Y_{\epsilon,1}u\|_0^2\le c_{\epsilon} \big(\|\dib u\|_0^2+\|\dib^* u\|_0^2+\|u\|_0^2+\|u\|_0\|u\|_1 \big). 
\]
\end{proof}

%
%

\section{Proof of Theorem \ref{thm:Khanh-Andy1} }\label{sec:proof of thm1}
\subsection{Kohn's weighted theory}
We first recall the results of Kohn's weighted theory. We let $\H_{p,q}(\Om,t\lambda)$
be the $(p,q)$-forms annihilated by $\dbar$ and $\dbar^{*,t}$ in $L^2_{p,q}(\Om,t\lambda)$.
\begin{theorem}\label{thm:L^2-theory}
Let $\Om$ be a smooth, bounded, pseudoconvex domain in a complex manifold $M$ that admits a smooth function $\lambda$ that is strictly plurisubharmonic 
$(p_0,q_0)$-forms in a neighborhood of $b\Om$. If $0\le p\le n$ and $q_0 \leq q \leq n$ then the following hold:
\begin{enumerate}[(i)]
\item The $L^2$ basic estimate holds on $L^2_{p,q}(\Omega)$: namely, there exists $c>0$ so that
\begin{equation} \label{L2basic new} 
\|u\|_0^2\le c(\|\dib u\|_0^2+\|\dbars u\|_0^2+\|u\|^2_{-1}).
\end{equation}
holds for all $u\in\Dom(\dib)\cap\Dom(\dbars)$.
	
\item The operators $\dbar: L^2_{p,\tilde q}(\Omega)\to L^2_{p,\tilde q+1}(\Omega)$ and
$\dbars: L^2_{p,\tilde q+1}(\Omega)\to L^2_{p,\tilde q}(\Omega)$ have closed range when $\tilde q = q$ or $q-1$.
Additionally, $\Box:L^2_{p,q}(\Omega)\to L^2_{p,q}(\Omega)$ has closed range.
	
\item  The space of harmonic forms $\mathcal H_{p,q}(\Omega)$ is finite dimensional. Additionally, there exists a constant $c>0$ so that  
\begin{equation} \label{L2basic} 
\|u\|_0^2\le c(\|\dbar u\|_0^2+ \|\dbars u\|_0^2)
\end{equation}
holds for all $u\in\Dom(\dbar)\cap\Dom(\dbars)\cap \H^{\perp}_{p,q}(\Omega)$.
	
\item The operators $N_{p,q}$, $\dbars N_{p,q}$, $N_{p,q}\dbars$, $\dbar N_{p,q}$, $N_{p,q}\dbar$, 
$I - \dbars \dbar N_{p,q}$, $I - \dbars N_{p,q} \dbar$, $I - \dbar \dbars N_{p,q}$, 
$I - \dbar N_{p,q} \dbars$ are $L^2$ bounded. 
In the case $q=1$, the operators $N_{(p,0)}:= \dbars N_{p,1}^2 \dbar$  
and hence $N_{(p,0)}$, $\dib N_{(p,0)}$ are continuous on  $L^2_{p,q}(\Om)$.


\end{enumerate}

Furthermore, for $s\geq 0$, there exists $T_s\geq 0$
so that for $t\ge T_s$ the following hold:
\begin{enumerate}[(i)]
\setcounter{enumi}{5}
\item The space of harmonic forms $\H_{p,q}(\Om,t\lambda) \subset H^s_{p,q}(\Omega)$ and 
is finite dimensional. 

\item The weighted $\dbar$-Neumann Laplacian $\Box^t = \dbar\dbar^{*,t} + \dbar^{*,t}\dbar$ has closed range in both 
$L^2_{p,q}(\Omega,e^{-t\lam})$ and $H^s_{p,q}(\Omega)$ if $t\geq T_s$.

\item The $\dbar$-Neumann operator $N_{p,q}^t$ and the canonical solution operators $\dbar^{*,t} N_{p,q}^t$, 
$N_{p,q}^t\dbar^{*,t}$, $\dbar N_{p,q}^t$, $ N_{p,q}^t\dbar$,  $I - \dbar^{*,t}\dbar N_{p,q}^t$, $I - \dbar^{*,t} N_{p,q}^t \dbar$ 
are exactly regular in the $H^s$-spaces.
\end{enumerate}
\end{theorem}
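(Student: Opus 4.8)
The plan is to adapt Kohn's classical weighted $L^2$-estimates \cite{Koh73} together with Hörmander's basic estimate to the manifold setting, using as the sole geometric input the existence of a function $\lambda$ that is strictly plurisubharmonic on $(p_0,q_0)$-forms near $b\Om$. First I would establish the \emph{weighted basic estimate}: for the weight $t\lambda$ with $t$ large, integrate by parts in the Kohn--Morrey--Hörmander identity. In the manifold setting this identity acquires curvature terms from the metric $g$ and torsion terms, but — crucially — the boundary integrand is still $\int_{b\Om}\opL_\rho(u,u)\,e^{-t\lambda}dS\ge 0$ by pseudoconvexity, and the interior term contributes $t\sum \frac{\p^2\lambda}{\p z_j\p\z_k}u_{I,jK}\overline{u_{I,kK}}$ up to errors of order $O(1)$ in $\|u\|_0^2$ (from the metric/torsion terms and from $|\p\lambda|$ terms, handled by \eqref{eqn:sc-lc}). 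Because $\lambda$ is strictly plurisubharmonic on $(p_0,q_0)$-forms near $b\Om$ and $q\ge q_0$, the standard combinatorial argument (the degree-$q$ estimate follows from the degree-$q_0$ one by the inclusion of index sets) gives, for $t\ge T_0$, a constant $c$ with $\|u\|_\phi^2 \le c(\|\dib u\|_\phi^2+\|\dbar^{*,t}u\|_\phi^2)$ for $u\in C^\infty_{p,q}(\bar\Om)\cap\Dom(\dbar^{*,t})$ supported near $b\Om$; patching with interior elliptic estimates and absorbing with \eqref{eqn:sc-lc} yields the unweighted \eqref{L2basic new} on all of $\Dom(\dib)\cap\Dom(\dbars)$, and \eqref{L2basic} on $\H_{p,q}^\perp$ after noting $\|u\|_{-1}$ is compact relative to the graph norm.

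From the basic estimate the functional-analytic consequences (ii)--(iv) are formal and I would deduce them by the standard Hörmander/Kohn machinery: the estimate $\|u\|_0^2\le c(\|\dib u\|_0^2+\|\dbars u\|_0^2)$ on $\H_{p,q}^\perp$ gives closed range for $\Box_{p,q}$ (and hence for $\dbar$ and $\dbars$ in the adjacent degrees), bounded inverse $N_{p,q}$ on $\H_{p,q}^\perp$, finite-dimensionality of $\H_{p,q}$ (via Rellich, since the unit ball of $\H_{p,q}$ is bounded in $H^1$ by the elliptic estimate off the boundary plus \eqref{L2basic new}), and $L^2$-boundedness of the listed compositions by writing each through the Hodge decomposition $u=\dib\dbars N_{p,q}u+\dbars\dib N_{p,q}u+H_{p,q}u$ and using $\|\dib N u\|_0^2+\|\dbars Nu\|_0^2\le (Nu,u)\le c\|u\|_0^2$. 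The $q=1$ statement about $N_{(p,0)}=\dbars N_{p,1}^2\dib$ is the usual trick of Boas--Straube for the zero-degree operator and transfers verbatim.

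For the regularity statements (vi)--(viii) I would run Kohn's weighted-theory argument: with the weight $t\lambda$ and $t$ large, the weighted basic estimate \emph{improves} to control $\|u\|_0^2$ plus a definite multiple of $t\|u\|_0^2$, so that commuting tangential derivatives $D^s$ through $\Box^t$ produces error terms each carrying a factor that can be absorbed once $t\ge T_s$ — this is where the threshold $T_s$ (growing with $s$) enters. Concretely, one shows $\|u\|_s^2\le c_{t,s}\|\Box^t u\|_{s-2}^2 + (\text{absorbable})$ for $u\in\Dom(\Box^t)$, using Lemma~\ref{lem:controlling Xu, Yu} to handle the ``benign'' derivatives and the enlarged coefficient of $\|u\|_0^2$ in the weighted estimate to kill the commutator terms involving one tangential $T$-derivative; the boundary term stays nonnegative because the Levi form and the weight interact with the correct sign. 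This yields $\H_{p,q}(\Om,t\lambda)\subset H^s$ and exact $H^s$-regularity of $N_{p,q}^t$ and its canonical companions, hence (vi)--(viii). The main obstacle — and the only place where the manifold setting genuinely differs from \cite{Koh73} — is the careful bookkeeping in the Kohn--Morrey--Hörmander identity of the extra curvature and torsion terms produced by a non-flat Hermitian metric $g$: one must check these are all lower-order (controlled by $\|u\|_0^2$ with $t$-independent constants) so that they do not compete with the good term $t\,\langle i\p\dbar\lambda\,\#\,u,u\rangle$, and that the boundary contribution is exactly $\int_{b\Om}\opL_\rho(u,u)e^{-t\lambda}$ with no stray terms — both of which follow from the Cartan-identity computation $\opL_\rho(i\bar L\wedge L')=\p\rho([L,\bar L'])$ recorded in Section~\ref{sec:prelims} together with integration by parts respecting $\Dom(\dbar^{*,t})$.
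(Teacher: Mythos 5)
Your proposal is correct and follows essentially the same route as the paper. The paper's own proof is a brief sketch: localize near $b\Om$ (ellipticity handles the interior), observe that strict plurisubharmonicity on $(p_0,q_0)$-forms propagates to all $(p,q)$-forms with $q\ge q_0$, establish the ``strong'' weighted basic estimate $\|u\|_{L^2_{t\lambda}}^2\le \frac{c}{t}(\|\dib u\|_{L^2_{t\lambda}}^2+\|\dbar^{*,t}u\|_{L^2_{t\lambda}}^2)+c_t\|u\|_{-1}^2$, and then cite standard references (Straube, Harrington--Raich, Chakrabarti--Shaw, Chen--Shaw) for the functional-analytic and $H^s$-regularity consequences. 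Your plan reproduces each of these steps — the Kohn--Morrey--H\"ormander identity with the extra metric curvature/torsion terms treated as lower order, the eigenvalue/index-set combinatorics for the degree extension, the $t$-gain in the weighted estimate to absorb commutator errors at level $s$ and obtain the threshold $T_s$ — and in fact supplies the details the paper leaves to the literature; there is no meaningful divergence in method.
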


\begin{proof} Here we sketch the main idea of the argument.  First, it suffices to concentrate on forms defined on $\supp\lambda$ because
ellipticity and interior regularity make Theorem \ref{thm:L^2-theory} automatic for forms supported away from $\bdy\Om$.

Next, since $\lambda$ is a strictly plurisubharmonic function acting on $(p_0,q_0)$-forms, it is also strictly plurisubharmonic on
$(p,q)$-forms also holds for all $0\le p\le n$ and $q_0\le q\le n$. 
We then apply the basic estimate with the weight $\phi=t\lambda$ with a sufficient large $t$ to obtain
\begin{equation}\label{eqn:L2 weight}
\no{u}_{L^2_{t\lambda}}^2\le \frac{c}{t}\left(\no{\dib u}_{L^2_{t\lambda}}^2+\no{\dib^{*,t}u}_{L^2_{t\lambda}}^2\right)+c_t\no{u}^2_{-1}
\end{equation} 	 
holds for all $u\in\Dom(\dib)\cap\Dom(\dib^{*,t})\cap L^{2}_{p,q}(\Om)$. 
Once you have the strong closed range estimates \eqref{eqn:L2 weight}, the conclusions of 
the theorem are well known. See, for example, \cite{Str10,HaRa22SCRE,ChHaRa24} and \cite[Section 5.3]{ChSh01} for the $L^2$-theory on complex manifolds.
\end{proof}

\subsection{\emph{A priori} estimate for  Theorem \ref{thm:Khanh-Andy1} }
\begin{lemma}	Let $\Om$ be a smooth, bounded, pseudoconvex domain in a complex manifold $M$ that admits a strictly plurisubharmonic function $\lambda$ on $(p_0,q_0)$-forms. 
If  $u\in L^2_{p,q}(\Om)\,\cap\Dom(\Box)$ with $0\le p\le n$,  $q_0 \leq q \leq n$ then
there exists $t>0$ such that 
\begin{align}
u&= P_{p,q}e^{-t\lambda} N_{p,q}^{t\lambda}\dib\left( e^{t\lambda}\dib^* u\right)+(I-P_{p,q})\dib^{*,t\lambda}N_{p,q+1}^{t\lambda}\dib u+H_{p,q}u 
+  P_{p,q} e^{-t\lambda} H^{t\lambda}_{p,q} e^{t\lambda} (I-P_{p,q}) u
\label{eqn:equality}
\end{align}
\end{lemma}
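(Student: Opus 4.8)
The plan is to split $u$ with the Bergman projection, $u=P_{p,q}u+(I-P_{p,q})u$, to match $(I-P_{p,q})u$ with the second term on the right, and to match $P_{p,q}u$ with the first, third and fourth terms; the bookkeeping for the latter is carried out by running the weighted Hodge decomposition of $e^{t\lambda}u$ through the operator $P_{p,q}\,e^{-t\lambda}(\cdot)$. Before starting I would reduce to smooth forms (by interior regularity and density), so that the domain conditions below hold automatically, and I would record from Theorem~\ref{thm:L^2-theory} that for $t$ large $N^{t\lambda}_{p,q}$ exists, commutes with $\dib$ and with $\dib^{*,t\lambda}$ on their domains, and satisfies the Hodge decomposition
\[
e^{t\lambda}u=\dib\,\dib^{*,t\lambda}N^{t\lambda}_{p,q}(e^{t\lambda}u)+\dib^{*,t\lambda}\dib\,N^{t\lambda}_{p,q}(e^{t\lambda}u)+H^{t\lambda}_{p,q}(e^{t\lambda}u).
\]
The only facts about the weight needed are elementary: since $\dib^{*,t\lambda}=e^{t\lambda}\dib^{*}e^{-t\lambda}$, one has $\dib^{*,t\lambda}(e^{t\lambda}w)=e^{t\lambda}\dib^{*}w$; if $\psi\in\Ran\dib^{*,t\lambda}$ then $e^{-t\lambda}\psi\in\Ran\dib^{*}$, hence $e^{-t\lambda}\psi\perp\ker\dib$ and $P_{p,q}(e^{-t\lambda}\psi)=0$; if $h\in\H_{p,q}(\Om,t\lambda)$ then $e^{-t\lambda}h\in\ker\dib^{*}$, so $P_{p,q}(e^{-t\lambda}h)=H_{p,q}(e^{-t\lambda}h)$; and $(I-P_{p,q})u\in\overline{\Ran\dib^{*}}$ is orthogonal in $L^{2}(\Om)$ to $\ker\dib\supset\H_{p,q}(\Om,t\lambda)$, so $H^{t\lambda}_{p,q}\big(e^{t\lambda}(I-P_{p,q})u\big)=0$.

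For the second term: $\dib^{*,t\lambda}N^{t\lambda}_{p,q+1}\dib u$ is the $L^{2}(\Om,t\lambda)$-canonical solution of $\dib w=\dib u$ (indeed $\dib u$ is $\dib$-closed and, lying in $\Ran\dib$, is weighted-orthogonal to $\H_{p,q+1}(\Om,t\lambda)$, so $\dib\,\dib^{*,t\lambda}N^{t\lambda}_{p,q+1}\dib u=\dib u$). Since $(I-P_{p,q})u$ is also a solution of $\dib w=\dib u$ lying in $(\ker\dib)^{\perp}$, the two solutions differ by a $\dib$-closed form, which $I-P_{p,q}$ kills; hence $(I-P_{p,q})\dib^{*,t\lambda}N^{t\lambda}_{p,q+1}\dib u=(I-P_{p,q})u$.

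For the first, third and fourth terms I would work from the displayed Hodge decomposition. Using $e^{t\lambda}\dib^{*}u=\dib^{*,t\lambda}(e^{t\lambda}u)$ and the commutation of $N^{t\lambda}_{p,q}$ with $\dib$ and $\dib^{*,t\lambda}$, its first summand equals $N^{t\lambda}_{p,q}\dib(e^{t\lambda}\dib^{*}u)$. Multiplying the decomposition by $e^{-t\lambda}$ and applying $P_{p,q}$: the middle summand becomes an element of $\Ran\dib^{*}$ and drops out; and, since $e^{t\lambda}(I-P_{p,q})u\perp_{t\lambda}\H_{p,q}(\Om,t\lambda)$, one has $H^{t\lambda}_{p,q}(e^{t\lambda}u)=H^{t\lambda}_{p,q}(e^{t\lambda}P_{p,q}u)$, so the last summand produces exactly the fourth term of the lemma, while the remaining $P_{p,q}u$-piece it contributes is, by the third weight-fact, an element of $\H_{p,q}(\Om)$. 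Assembling, $P_{p,q}u$ equals the first term plus this harmonic piece plus the fourth term; identifying the harmonic piece as $H_{p,q}u$ and adding the second term gives the identity.

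The step I expect to be the main obstacle is precisely the identification of the harmonic piece: one must verify that $P_{p,q}\,e^{-t\lambda}H^{t\lambda}_{p,q}e^{t\lambda}$, applied to $u$, returns exactly $H_{p,q}u$ — the correct element of $\H_{p,q}(\Om)$, not merely some element of it. The inputs here are $\dib^{*}\big((I-P_{p,q})u\big)=0$, so that $\dib^{*}u=\dib^{*}(P_{p,q}u)$, and the fact that $H^{t\lambda}_{p,q}$ and $H_{p,q}$ pick out the two harmonic representatives of the same Dolbeault class (the smooth-form reduction being used to legitimize the commutator and domain manipulations throughout). A further small point arises when $q=q_{0}$: since Theorem~\ref{thm:L^2-theory} guarantees closed range of $\dib$ only at levels $q$ and $q-1$, one should avoid $N^{t\lambda}_{p,q-1}$ and instead commute $N^{t\lambda}_{p,q}$ directly past $\dib\,\dib^{*,t\lambda}$, which is exactly what the displayed Hodge decomposition permits.
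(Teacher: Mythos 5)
Your route differs genuinely from the paper's. The paper first proves a Boas--Straube type operator identity for $\dbars N_{p,q}$ (by representing $f=P_{p,q}\varphi$ through the weighted canonical solution $v=\dbar^{*,t\lambda}N^{t\lambda}_{p,q}f$), takes $L^2$-adjoints of it, and then substitutes the resulting formulas for $N_{p,q}\dbar$ and $\dbar^{*}N_{p,q+1}$ into $u=N_{p,q}\dbar\dbars u+\dbar^{*}N_{p,q+1}\dbar u+H_{p,q}u$; the fourth term then appears \emph{algebraically} as $P_{p,q}e^{-t\lambda}H^{t\lambda}_{p,q}e^{t\lambda}N_{p,q}\dbar\dbars u$, so no separate identification of a harmonic residue is required. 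You instead split $u=P_{p,q}u+(I-P_{p,q})u$, match $(I-P_{p,q})u$ to the canonical-solution term by a uniqueness argument, and push the weighted Hodge decomposition of $e^{t\lambda}u$ through $P_{p,q}e^{-t\lambda}$. Most of your steps are fine, including the observations that the middle summand drops out and that the displayed fourth term $P_{p,q}e^{-t\lambda}H^{t\lambda}_{p,q}e^{t\lambda}(I-P_{p,q})u$ actually vanishes.

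The step you flag as the main obstacle is, however, a genuine gap, and your heuristic does not close it. You need $P_{p,q}e^{-t\lambda}H^{t\lambda}_{p,q}(e^{t\lambda}P_{p,q}u)=H_{p,q}u$. Pairing against $g'\in\H_{p,q}(\Om)$ and using self-adjointness of both harmonic projections gives
$\big(H_{p,q}e^{-t\lambda}H^{t\lambda}_{p,q}(e^{t\lambda}v),\,g'\big)_0=\big(v,\,H^{t\lambda}_{p,q}g'\big)_0$,
which equals $(v,g')_0=(H_{p,q}v,g')_0$ exactly when $v\perp_{L^2}\big(g'-H^{t\lambda}_{p,q}g'\big)\in\overline{\Ran\,\dbar}$. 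This orthogonality holds for $v\in\H_{p,q}(\Om)$ (your ``same Dolbeault class'' reasoning is sound there), but it fails for general $v\in\ker\dbar$: with $v=P_{p,q}u=H_{p,q}u+\dbar\dbars N_{p,q}u$, the $\Ran\,\dbar$ component contributes a nonzero pairing. What you actually get is $H_{p,q}u$ plus the extra harmonic form $P_{p,q}e^{-t\lambda}H^{t\lambda}_{p,q}e^{t\lambda}\big(P_{p,q}-H_{p,q}\big)u = P_{p,q}e^{-t\lambda}H^{t\lambda}_{p,q}e^{t\lambda}N_{p,q}\dbar\dbars u$, which is precisely the fourth term the paper's adjoint computation yields (its line ``$N_{p,q}\dbar\dbars=I-P_{p,q}$'' should read ``$N_{p,q}\dbar\dbars=P_{p,q}-H_{p,q}$''; the $(I-P_{p,q})u$ in the display is therefore a slip, as your own observation that it vanishes already suggests). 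This discrepancy is harmless downstream, since by the subsequent remark the term lies in the finite-dimensional $\H_{p,q}(\Om)$ with norm controlled by $\|u\|_0$, which is all the a~priori estimate uses; but to actually \emph{prove} the identity one should either follow the paper's adjoint route or accept $N_{p,q}\dbar\dbars u$ in the fourth slot rather than trying to force the residue to be exactly $H_{p,q}u$.
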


\br The $q=n$ case is not interesting as the $\dbar$-Neumann problem is the Dirichlet problem. Additionally, $P_{p,n}=I$, the term with $N_{p,n+1}$ is always $0$ (so we do not have to make a special
definition that $N_{p,n+1}=0$), and the result is straight forward. Thus, we can assume that $q \leq n-1$. 
\er
\br \label{rem:can swap P_q for H_q}
We observe that $\dbar P_{p,q} e^{-t\lambda} H^{t\lambda}_{p,q} e^{t\lambda}=0$ and
\[
\dbars P_{p,q} e^{-t\lambda} H^{t\lambda}_{p,q} e^{t\lambda}
=\dbars  e^{-t\lambda} H^{t\lambda}_{p,q} e^{t\lambda} = e^{-t\lambda}  \dbar^{*,t-\lam} H^{t\lambda}_{p,q} e^{t\lambda} =0.
\]
Thus, $ P_{p,q} e^{-t\lambda} H^{t\lambda}_{p,q} e^{t\lambda} =  H_{p,q} e^{-t\lambda} H^{t\lambda}_{p,q} e^{t\lambda}$.
\er

\begin{proof} The proof of \eqref{eqn:equality} adapts ideas from \cite{BoSt90}. 
We start by establishing an identity in the spirit of Boas-Straube \cite{BoSt90} for $\dib N_{p,q}$ and $\dib^* N_{p,q}$ for $\Om\subset M$. 
Let $\varphi \in L^2_{p,q}(\Om)$
and $f=P_{p,q}\varphi$. Then $f$ is a $\dib$-closed $(p,q)$-form 
so there exist $t>0$ such that $v:=\dib^{*,t\lambda} N^{t\lambda}_{p,q} f$ is a solution of the 
$\dib v=(I - H^{t\lambda}_{p,q})f = (I - H^{t\lambda}_{p,q})P_{p,q}\vp$. It follows
\begin{align*}
\dib^* N_{p,q}P_{p,q}\varphi 
&=\dib^* N_{p,q}  f
=\dib^* N_{p,q}\dib  v + \dbars N_{p,q} H^{t\lambda}_{p,q} f \\
&= \dib^* N_{p,q}\dib \dib^{*,t\lambda} N^{t\lambda}_{p,q}\dbar v + \dbars N_{p,q} H^{t\lambda}_{p,q} f\\
&= \dib^* N_{p,q}\dib \dib^{*,t\lambda} N^{t\lambda}_{p,q}(I - H^{t\lambda}_{p,q})P_{p,q}\vp + \dbars N_{p,q} H^{t\lambda}_{p,q} f \\
&= (I-P_{p,q-1}) \dib^{*,t\lambda} N^{t\lambda}_{p,q}P_{p,q}\vp + \dbars N_{p,q} H^{t\lambda}_{p,q} P_{p,q}\varphi
\end{align*}

In other words, the following identity 
\[
\dib^* N_{p,q}\varphi =(I-P_{p,q-1}) \dib^{*,t\lambda} N^{t\lambda}_{p,q}P_{p,q}\varphi  + \dbars N_{p,q} H^{t\lambda}_{p,q} P_{p,q}\varphi
\]
holds since $\dib^* N_{p,q}P_{p,q}\varphi=\dib^* N_{p,q}\varphi$. We then take the $L^2$-adjoint of $\dib^* N_{p,q}$ and $(I-P_{p,q}) \dib^{*,t\lambda} N^{t\lambda}_{p,q}P_{p,q}$
to obtain 
\[
N_{p,q}\dib \psi =P_{p,q}e^{-t\lambda}N^{t\lambda}_{p,q}\dbar\big(e^{t\lambda}(I- P_{p,q-1})\psi\big)
+ P_{p,q} e^{-t\lambda} H^{t\lambda}_{p,q} e^{t\lambda} N_{p,q} \dbar \psi
\]
for any $\psi\in L^2_{p,q-1}(\Om)$. Observe that since $P_{p,q} = I - \dbars \dbar N_{p,q}$, it follows that $\dbars P_{p,q} = \dbars$ so
\[
\dbars P_{p,q} e^{-t\lambda} H^{t\lambda}_{p,q}
= \dbars e^{-t\lambda} H^{t\lambda}_{p,q} =  e^{-t\lambda} \dbar^{*,t\lambda} H^{t\lambda}_{p,q} =0.
\]
Thus, $P_{p,q} e^{-t\lambda} H^{t\lambda}_{p,q}e^{t\lambda}: L^2_{p,q}(\Om) \to \H_{p,q}(\Om)$.

We now ready for the proof of the formula \eqref{eqn:equality}. If $u\in L^2_{p,q}(\Om) \cap\Dom(\Box)$, we have
\begin{eqnarray}
\begin{split}
u&=N_{p,q}\Box u+H_{p,q}u\\
&= \big(N_{p,q}\dib \big)\big(\dib^* u\big)+\big(\dib^* N_{{p,q+1}}\big)\big( \dib u\big)+H_{p,q}u\\
&=P_{p,q}e^{-t\lambda}N^{t\lambda}_{p,q}  \dbar\big(e^{t\lambda}(I- P_{p,q-1})\dbars u\big)
+ P_{p,q} e^{-t\lambda} H^{t\lambda}_{p,q} e^{t\lambda} N_{p,q} \dbar \dbars u \\
&+(I-P_{p,q}) \dib^{*,t\lambda} N^{t\lambda}_{p,q+1}P_{p,q+1}\dbar u  + \dbars N_{p,q+1} H^{t\lambda}_{p,q+1} P_{p,q+1}\dbar u+H_{p,q}u\\
&=P_{p,q}e^{-t\lambda} N_{p,q}^{t\lambda}\dib\left( e^{t\lambda}\dib^* u\right)+(I-P_{p,q})\dib^{*,t\lambda}N_{p,q+1}^{t\lambda}\dib u+H_{p,q}u \\
&+ P_{p,q} e^{-t\lambda} H^{t\lambda}_{p,q} e^{t\lambda} N_{p,q} \dbar \dbars u+ \dbars N_{p,q+1} H^{t\lambda}_{p,q+1} \dbar u
\end{split}
\end{eqnarray}
where the last equality follows by $(I-P_{p,q-1})\dib^* =\dib^* $  and $P_{p,q+1}\dib=\dib$. Since $N_{p,q} \dbar \dbars = I-P_{p,q}$, we have only to investigate the final term
because we do not want $\dbars N_{p,q+1}$ as the final operator in a composition. Fortunately, however, we can exploit  $H^{t\lambda}_{p,q+1} \dbar$ and write
\begin{align*}
\dbars N_{p,q+1} H^{t\lambda}_{p,q+1} \dbar u
&= \dbars N_{p,q+1} \big(I - \dbar^{*,t\lambda} \dbar N^{t\lambda}_{p,q+1} - \dbar\dbar^{*,t\lambda} N^{t\lambda}_{p,q+1}\big) \dbar u \\
&= \dbars N_{p,q+1}\dbar \big(I -  \dbar^{*,t\lambda} N^{t\lambda}_{p,q+1}\dbar\big) u \\
&= \big(I - P_{p,q}\big) P^{t\lambda}_{p,q} u = \big(I - P_{p,q}\big)P_{p,q} P^{t\lambda}_{p,q} u=0.
\end{align*}
The result follows.
\end{proof}

\begin{lemma}\label{lmm:degree extension}	
Assume that the hypothesis of Theorem~\ref{thm:Khanh-Andy1} holds for forms of degree $(p_0,q_0)$. Then it still holds on forms of degrees $(p,q)$ with $0\le p\le n$ and $q_0\le q\le n$. 
\end{lemma}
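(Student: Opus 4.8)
To prove the lemma I need to check the three hypotheses of Theorem~\ref{thm:Khanh-Andy1} in every degree $(p,q)$ with $0\le p\le n$ and $q_0\le q\le n$: the existence of a strictly plurisubharmonic function on $(p,q)$-forms near $b\Om$; the bound $C^{-1}\le|\gamma(T_\epsilon)|\le C$ on $b\Om$; and estimate \eqref{eqn:weak compactness 1} for all $u\in C^\infty_{p,q}(\bar\Om)\cap\Dom(\dib^*)$. The second condition does not see the form degree, so the same family $T_\epsilon$, hence the same one-forms $\alpha_\epsilon$, serves. For the first I would use the pointwise fact (already invoked in the proof of Theorem~\ref{thm:L^2-theory}) that $\lambda$ is strictly plurisubharmonic on $(p,q)$-forms exactly when the sum of the $q$ smallest eigenvalues of its complex Hessian is positive: this is independent of $p$ and non-decreasing in $q$ once positive, so it propagates from $(p_0,q_0)$ to all $(p,q)$ with $q\ge q_0$. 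Consequently the $L^2$ basic estimate \eqref{L2basic new} of Theorem~\ref{thm:L^2-theory}(i) is available on $L^2_{p,q}(\Om)$ for every such $(p,q)$; this is what will let me absorb zeroth-order errors.

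The core of the proof is propagating \eqref{eqn:weak compactness 1}, which I would do one index at a time, working on a boundary coordinate patch with an orthonormal coframe $\om_1,\dots,\om_n$ ($\bom_n$ the complex normal) and summing against the partition of unity $\{\eta_\alpha\}$, which only produces zeroth-order terms in $u$. For the step from $(p,q)$ to $(p,q+1)$: given $u\in C^\infty_{p,q+1}(\bar\Om)\cap\Dom(\dib^*)$, set $u^{(k)}=\om_k\# u$. Since contraction with a $(1,0)$-form touches only the anti-holomorphic indices, each $u^{(k)}\in C^\infty_{p,q}(\bar\Om)\cap\Dom(\dib^*)$. The algebraic identities $\sum_k\|\om_k\# w\|_0^2=(\T{bar-degree of }w)\|w\|_0^2$ and $\alpha_\epsilon\#(\om_k\# u)=-\om_k\#(\alpha_\epsilon\# u)$ (contractions with $(1,0)$-forms anticommute) give $q\,\|\alpha_\epsilon\# u\|_0^2=\sum_k\|\alpha_\epsilon\# u^{(k)}\|_0^2$. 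Then I apply \eqref{eqn:weak compactness 1} in degree $(p,q)$ to each $u^{(k)}$, using that $\dib(\om_k\# u)$ involves, apart from zeroth-order terms, only anti-holomorphic derivatives of $u$ and so is bounded by $C(\|\dib u\|_0+\|\dib^* u\|_0+\|u\|_0)$ via Lemma~\ref{lem:controlling Xu, Yu}(i), that $\dib^*(\om_k\# u)$ is likewise bounded by $C(\|\dib^* u\|_0+\|u\|_0)$ (its first-order terms reorganizing into those of $\dib^* u$), and that $\|u^{(k)}\|_{-1}\le C\|u\|_{-1}$. This yields
\[
q\,\|\alpha_\epsilon\# u\|_0^2\le C\epsilon\big(\|\dib u\|_0^2+\|\dib^* u\|_0^2+\|u\|_0^2\big)+C\,C_\epsilon\|u\|_{-1}^2 .
\]
Using the basic estimate on $(p,q+1)$-forms to absorb $C\epsilon\|u\|_0^2$, dividing by $q\ge 1$, and relabeling $\epsilon$ and $C_\epsilon$ gives \eqref{eqn:weak compactness 1} in degree $(p,q+1)$. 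Iterating this with $p=p_0$ reaches all $(p_0,q)$, $q_0\le q\le n$.

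The extension in the holomorphic degree $p$ is identical in structure: replace $\om_k\#$ by the interior product $\iota_{L_k}$ with the dual $(1,0)$-frame to raise $p$, or by $\om_k\wedge(\cdot)$ to lower $p$. These operators commute with $\alpha_\epsilon\#$ (they act on the holomorphic indices, $\alpha_\epsilon\#$ on the anti-holomorphic ones), preserve $\Dom(\dib^*)$, and have commutators with $\dib$ and $\dib^*$ that are either zeroth-order or of the benign first-order type controlled by Lemma~\ref{lem:controlling Xu, Yu}; the same computation and the basic estimate then run through. Carrying this out for each fixed $q$ reaches every $0\le p\le n$, completing the proof.

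The one genuinely delicate point is that $[\dib,\om_k\#]$ (and its analogue in the $p$-step) is first-order rather than zeroth-order; this is exactly why both Lemma~\ref{lem:controlling Xu, Yu} (to see that the bad first-order term is an anti-holomorphic, hence controllable, derivative) and the propagated basic estimate (to swallow the resulting $\epsilon\|u\|_0^2$) are needed. Everything else is bookkeeping --- checking that contraction and wedging preserve the $\Dom(\dib^*)$ boundary condition, and that the partition-of-unity and commutator errors carry constants independent of $\epsilon$ --- so I do not expect a conceptual obstacle; this is the complex-manifold version of the classical fact that weak-compactness-type estimates propagate upward in the form bidegree.
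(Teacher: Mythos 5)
Your proof is correct and takes essentially the same approach as the paper: it reduces the $q$-degree by contracting with the orthonormal coframe (your $u^{(k)} = \om_k\#u$ is exactly the paper's $u_k$), converts the estimate for $\|\alpha_\eps\#u\|_0$ into estimates for the lower-degree forms $\alpha_\eps\#u^{(k)}$ via an algebraic identity, applies the inductive hypothesis, and controls the commutator errors through Lemma~\ref{lem:controlling Xu, Yu} and the basic estimate. The paper packages the algebraic step as the wedge-product identity $\theta\#u = -\tfrac{1}{q}\sum_k\bom_k\we(\theta\#u_k)$, which is equivalent to your pairing of the anticommutativity $\alpha_\eps\#(\om_k\#u)=-\om_k\#(\alpha_\eps\#u)$ with the Parseval-type identity $\sum_k\|\om_k\#w\|_0^2=q\|w\|_0^2$, and it leaves the commutator bounds, the eigenvalue monotonicity behind the propagation of strict plurisubharmonicity, and the reduction to $p=p_0=0$ more terse than your write-up does.
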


\begin{proof} It suffices to prove the result for $p=p_0=0$.

We need to understand better the effect of the $\#$-operator on a $(1,0)$-form and forms of a higher degree. 
Given $1 \leq k \leq n$ and a $(0,q+1)$-form $u = \sum_{K\in\I_{q+1}} u_K\, \bom^K$, define the coefficient function $u_{kJ}$ by
$u_{kJ} = \eps^{kJ}_K u_K$
where $\eps^{kJ}_K$ is the sign of the permutation if $\{k\}\cup J = K$ as sets and $\eps^{kJ}_K=0$ otherwise. For such a $(0,q+1)$-form $u$, set
\[
u_k = \sum_{J\in\I_q} u_{kJ}\, \bom^J .
\]
It is known that $u_k \in \Dom(\dbar)\cap\Dom(\dbars)$, e.g., \cite[Proposition 4.5]{Str10}.

Observe that
\[
\bom_k \wedge u_k = \sum_{J\in\I_q} u_{kJ}\, \bom_k \we \bom^J = \sum_{\atopp{K\in\I_{q+1}}{k\in K}} u_K\, \bom^K,
\]
which means
\[
\sum_{k=1}^n \bom_k \we u_k = \sum_{k=1}^n \sum_{\atopp{K\in\I_{q+1}}{k\in K}} u_K\, \bom^K = (q+1) \sum_{K\in\I_{q+1}} u_K\, \bom^K = (q+1)  u.
\]
Next, we compute how $\#$-operator acts across wedge products. Namely, if $\theta$ is a $(1,0)$-form, $1\leq k \leq n$ and $J\in\I_q$, then
\[
\theta \# \big(\bom_k \we \, \bom^J\big)
= \theta \#\Big(\sum_{K\in\I_{q+1}} \eps^{kJ}_K \, \bom^K\Big)
= \sum_{\atopp{K\in\I_{q+1}}{J'\in\I_q}} \eps^{kJ}_K \sum_{j=1}^n \eps^{jJ'}_K \theta_j \bom^{J'}.
\]

We investigate the product $\eps^{kJ}_K\eps^{jJ'}_K = \eps^{kJ}_{jJ'}$. If $k=j$,  
then $\eps^{kJ}_{jJ'}=1$ if and only if $J=J'$. If $j \neq k$, then
\[
\eps^{kJ}_K\eps^{jJ'}_K = \eps^{kJ}_{jJ'} = \sum_{I\in\I_{q-1}} \eps^{kjI}_{jkI}\eps^{kI}_{J'}\eps^J_{jI} = -\sum_{I\in\I_{q-1}}\eps^{kI}_{J'}\eps^J_{jI}
\]
Given these computations, we see
\[
\theta \# \big(\bom_k \we \, \bom^J\big) = \theta_k \, \bom^J - \sum_{\atopp{J'\in\I_q}{I\in\I_{q-1}}}\sum_{\atopp{j=1}{j\neq k}}^n \eps^{kI}_{J'}\eps^J_{jI} \theta_j \, \bom^{J'}
= (\theta\# \bom_k)\we \bom^J - \bom_k \we \Big(\sum_{I\in\I_{q-1}}\sum_{\atopp{j=1}{j\neq k}}^n \eps^J_{jI} \theta_j \, \bom^I\Big).
\]
From this calculation it is immediate that 
\begin{align*}
&\frac{1}{q+1} \theta \# u = \theta \# \Big(\sum_{k=1}^n \bom_k \wedge u_k\Big) = \sum_{k=1}^n(\theta \# \bom_k) u_k - \sum_{k=1}^n \Big( \bom_k \we \big(\theta\# u_k\big)\Big) \\
&= \sum_{k=1}^n\theta_k u_k - \sum_{k=1}^n \Big( \bom_k \we \big(\theta\# u_k\big)\Big)
= \sum_{J\in\I_q} \sum_{k=1}^n \theta_k u_{kJ}\, \bom^J - \sum_{k=1}^n \Big( \bom_k \we \big(\theta\# u_k\big)\Big) \\
&= \theta \# u- \sum_{k=1}^n \Big( \bom_k \we \big(\theta\# u_k\big)\Big)
\end{align*}
Thus, good estimates for $\alpha_\eps \# u_k$ imply good estimates for $\bom_k \we (\alpha_\eps\# u_k)$ which in turn will imply the desired estimates for
$\alpha_\eps\# u$.
\end{proof}

\begin{theorem}\label{thm:apriori thm1} 
Assume that the hypothesis of Theorem~\ref{thm:Khanh-Andy1} holds for forms of degree $(p,q)$ and that the Bergman projection 
$P_{p,q}$ is exactly regular. 
Then $\H_{p,q}(\Om)\subset C^\infty_{p,q}(\bar\Om)$,  
and for any $s\in \mathbb N$,  there exists $\delta_s>0$  such that the 
\emph{a priori} estimate for the operator $\Box^{\delta}:=\Box+\delta T^*T$	
\begin{equation}
\|u\|^2_s+\|\dbar u\|^2_s+\|\dbars u\|^2_s+\delta\|u\|^2_{s+1}+\|\dbar\dbars u\|^2_s+\|\dbars\dbar u\|^2_s+\delta^2\|u\|^2_{s+2}\le c_s\left(\|\Box^{\delta}u\|_s^2+\|u\|_0^2\right),
\end{equation}
holds for any $u\in C^\infty_{p,q}(\bar\Om)\cap \T{Dom}(\Box)$ and any $\delta\in [0,\delta_{s})$.
\end{theorem}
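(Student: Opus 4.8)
The plan is to establish the a priori estimate by induction on $s$, using the machinery built in Section~\ref{sec:derivatives} to reduce everything to controlling the top-order tangential derivative $T^s u$, and then absorbing $\|T^s u\|_0^2$ using the weak compactness estimate \eqref{eqn:weak compactness 1} transported through the commutator identities of Lemma~\ref{lmm:commutators with Ts}. First I would set up elliptic regularization: replace $\Box^\delta$ by $\Box^{\delta}_t = \Box^\delta + t Q$ for a small elliptic regularizing term $Q$ (or the author's ``new version of elliptic regularization'' promised for the last section), so that all the forms appearing are genuinely smooth up to the boundary and the integrations by parts below are legitimate; the estimates obtained must be uniform in the regularization parameter so they survive the limit. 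The $s=0$ base case is essentially the statement that $\Box^\delta = \Box + \delta T^*T$ has the expected coercivity, which follows from the $L^2$ basic estimate \eqref{L2basic new} (available here via Theorem~\ref{thm:L^2-theory}, since the strictly plurisubharmonic function on $(p_0,q_0)$-forms and Lemma~\ref{lmm:degree extension} give it in all relevant degrees) together with the elementary identity $(\Box^\delta u,u) = \|\dbar u\|_0^2 + \|\dbars u\|_0^2 + \delta\|Tu\|_0^2$.

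For the inductive step I would argue as follows. By Lemma~\ref{lmm:Hs Ts}, estimate \eqref{eqn:T esilon 2}, the full $H^s$-norm of $u$ is controlled by $\|\Box u\|_{s-2}^2$, a lower-order term $\|u\|_{s-1}^2$, and $\|T_\eps^s u\|_0^2$; and since $\|\Box u\|_{s-2} \le \|\Box^\delta u\|_{s-2} + \delta\|T^*Tu\|_{s-2} \le \|\Box^\delta u\|_{s-2} + c\,\delta\|u\|_s$, the $\delta\|u\|_s$ term is reabsorbed by \eqref{eqn:sc-lc} once $\delta < \delta_s$ is small. So the whole problem reduces to bounding $\|T_\eps^s u\|_0^2$. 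Here I integrate by parts: writing $\|T_\eps^s u\|_0^2$ and moving half the powers of $T_\eps$ onto the other factor (legitimate after regularization, modulo commutator terms handled by \eqref{eqn:X,Y,Z ests}), one pairs against $\Box^\delta$ and the Hodge-theoretic structure, and the obstruction terms that do \emph{not} come with a factor of $\Box u$ or a lower-order norm are precisely $\|\bar\alpha_\eps \wedge T_\eps^{s} u\|_0^2$-type and $\|\alpha_\eps \# T_\eps^{s} u\|_0^2$-type quantities, by \eqref{eqn: T epsilon wedge}--\eqref{eqn:T esilon conrner}. These are exactly the quantities the weak compactness hypothesis \eqref{eqn:weak compactness 1} bounds by $\eps(\|\dbar(\cdot)\|_0^2 + \|\dbars(\cdot)\|_0^2) + C_\eps\|\cdot\|_{-1}^2$ applied to the form $T_\eps^s u$ (after first handling the $\dbars$-domain condition for $T_\eps^s u$). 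Choosing $\eps$ small relative to the induction constants lets one reabsorb $\eps\|\dbar T_\eps^s u\|_0^2 + \eps\|\dbars T_\eps^s u\|_0^2 \lesssim \eps\|u\|_{s+1}^2$-type terms — note these are genuinely order $s+1$, which is why the $\delta\|u\|_{s+1}^2$ and $\delta^2\|u\|_{s+2}^2$ terms appear on the left of the claimed estimate with their $\delta$-weights: the regularization supplies exactly that much extra control. The exact-regularity hypothesis on $P_{p,q}$ enters when converting the estimates for the canonical solution operators into estimates for $N_{p,q}$ and the other composite operators, and in upgrading the $L^2$ harmonic forms to $C^\infty_{p,q}(\bar\Om)$ (so $\H_{p,q}(\Om)\subset C^\infty$).

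The main obstacle I anticipate is the careful bookkeeping in the integration-by-parts step that isolates the $\bar\alpha_\eps\wedge$ and $\alpha_\eps\#$ terms: one must route the top-order derivatives so that (a) the ``benign'' pieces $\mathcal X_{\eps,s}, \mathcal Y_{\eps,s}, \mathcal Z_{\eps,s}$ from Lemma~\ref{lmm:commutators with Ts} always land against factors of lower Sobolev order (so \eqref{eqn:X,Y,Z ests} can be applied and then \eqref{eqn:sc-lc}'d away), and (b) the form to which \eqref{eqn:weak compactness 1} is applied, namely $T_\eps^s u$, is actually in $\Dom(\dbars)$ — which requires knowing that applying $T_\eps^s$ (a vector field transverse to $\bdy\Om$) preserves the $\dbars$ boundary condition up to terms that are again benign, a point that has to be checked on the special boundary chart exactly as in Straube's argument and in \cite{KhRa20}. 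A secondary subtlety is that the estimate must be proved \emph{uniformly in $\delta$} and uniformly in the elliptic regularization parameter, so at no point may one use coercivity that degenerates as $\delta\to 0$; the role of the $\delta T^*T$ term is only to make the intermediate quantities finite, and all the genuine gain comes from $\Box$ together with \eqref{eqn:weak compactness 1}. Once the a priori estimate is in hand uniformly, the standard elliptic-regularization limiting argument (as in Section~\ref{sec:prelims}'s framework and \cite{Str10}) promotes it to the actual statement.
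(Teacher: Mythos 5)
Your proposal captures the right broad strategy for two of the paper's three steps (controlling tangential $T_\eps^s$-derivatives via Lemma~\ref{lmm:Hs Ts}, routing through the commutator identities of Lemma~\ref{lmm:commutators with Ts}, and absorbing the resulting $\alpha_\eps\#T_\eps^s u$ and $\alpha_\eps\#T_\eps^s\dbar u$ terms with the weak-compactness hypothesis~\eqref{eqn:weak compactness 1}). Two issues need addressing, one of them a genuine gap.

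First, the gap. You are vague about how $\|u\|_s^2$ itself and the inclusion $\H_{p,q}(\Om)\subset C^\infty_{p,q}(\bar\Om)$ are obtained, attributing them to ``the exact-regularity hypothesis on $P_{p,q}$'' at a late stage. But this is the crux of why Theorem~\ref{thm:Khanh-Andy1} requires the strictly plurisubharmonic function hypothesis at all: the paper establishes, \emph{before any} integration by parts, the Boas--Straube type identity~\eqref{eqn:equality}
\[
u = P_{p,q}e^{-t\lambda}N_{p,q}^{t\lambda}\dbar(e^{t\lambda}\dbars u) + (I-P_{p,q})\dbar^{*,t\lambda}N^{t\lambda}_{p,q+1}\dbar u + H_{p,q}u + P_{p,q}e^{-t\lambda}H^{t\lambda}_{p,q}e^{t\lambda}(I-P_{p,q})u
\]
and then invokes Kohn's weighted theory (Theorem~\ref{thm:L^2-theory}(viii)) together with the assumed exact regularity of $P_{p,q}$ to deduce that $H_{p,q}u\in C^\infty_{p,q}(\bar\Om)$, hence (by finite dimension and smoothness) $\|H_{p,q}u\|_s \le c_s\|u\|_0$, and hence the key estimate $\|u\|_s^2\le c_s(\|\dbar u\|_s^2+\|\dbars u\|_s^2+\|u\|_0^2)$. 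Without this, the a priori estimate fails at $\delta=0$ for $u$ harmonic: the right side reduces to $c_s\|u\|_0^2$, which cannot dominate $\|u\|_s^2$ unless $\H_{p,q}(\Om)$ is already known to consist of smooth forms. Your route — iterating the $L^2$-basic estimate on $T_\eps^s u$ — would reproduce $\bar\alpha_\eps\wedge T_\eps^s u$ terms that~\eqref{eqn:weak compactness 1} does \emph{not} control (unlike~\eqref{eqn:main2} in Theorem~\ref{thm:Khanh-Andy2}); the paper avoids this by the specific pairing in which $(T_\eps^s u,\,\bar\alpha_\eps\wedge T_\eps^s\dbars u)$ converts back to $(\alpha_\eps\#T_\eps^s u,\,T_\eps^s\dbars u)$ via the defining property of $\#$, so only $\alpha_\eps\#$ terms survive. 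You should make the Boas--Straube identity and the role of $N^{t\lambda}_{p,q}$ explicit.

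Second, a structural confusion: you propose adding yet another elliptic regularization $\Box^\delta_t = \Box^\delta+tQ$ inside the proof ``so the integrations by parts are legitimate.'' That is unnecessary and muddles the logic: the theorem is an \emph{a priori} estimate for $u\in C^\infty_{p,q}(\bar\Om)\cap\Dom(\Box)$, which is already smooth, so all manipulations are legitimate as written. The $\delta T^*T$ term \emph{is} the regularizer, and the uniform-in-$\delta$ a priori estimate proved here is the input to the actual limiting argument, which lives in Theorem~\ref{thm:regularization}. Finally, you correctly flag the $\Dom(\dbars)$-preservation under $T_\eps^s$ and the need to treat both $T^s u$ and $T^s\dbar u$ with~\eqref{eqn:weak compactness 1} — those points are right and match the paper.
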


\begin{proof} 	
Let $\mathcal A_\delta^s(u)$ be the expression defined by
\[
\mathcal A_\delta^s(u):= \|u\|^2_s+\|\dbar u\|^2_s+\|\dbars u\|^2_s+\delta\|u\|^2_{s+1}+\|\dbar\dbars u\|^2_s+\|\dbars\dbar u\|^2_s+\delta^2\|u\|^2_{s+2}.
\]
We prove the theorem by induction, and our argument will show that $\H_{p,q}(\Om)\subset C^\infty_{p,q}(\bar\Om)$ and
\begin{equation}\label{eqn:induction}
A_\delta^s(u)\le c_s \|\Box^\delta u\|^2_s+A_\delta^{s-1}(u)
\end{equation}
for any $s\ge 1$, and  
\begin{equation}\label{eqn:induction s=0}
A_\delta^0(u)\le c\big(\|\Box^\delta u\|_0^2+\|u\|_0^2\big).
\end{equation} 
We show \eqref{eqn:induction} by the following argument:

\textbf{Step 1: The proof that $\H_{p,q}(\Om)\subset C^\infty_{p,q}(\bar\Om)$ and an estimation of} $\|u\|_s^2$.
For $s>0$ fixed and $t$ sufficiently large, we combine \eqref{eqn:equality} with Theorem~\ref{thm:L^2-theory} (viii) and the facts that
$P_{p,q}$, $N^{t\lambda}_{p,q}\dib$, and $\dib^{*,t\lambda}N^{t\lambda}_{p,q+1}$ are continuous on $H^s$ to observe that if
$u\in C^\infty_{p,q}(\bar\Om)\cap\Dom(\dbars)$ then $H_{p,q}u\in H^s_{p,q}(\bar\Om)$. By Remark \ref{rem:can swap P_q for H_q}, the regularity of $I-P_{p,q}$, and the fact that $s\in\mathbb N$ is arbitrary,
it follows that
$H_{p,q}u \in C^\infty_{p,q}(\bar\Om)$, and
\begin{equation*}
\no{u}^2_s\le c_s(\no{\dib u}^2_s+\no{\dib^* u}^2_s+\no{H_{p,q}u}_s^2),
\end{equation*}
holds for any $u\in C^\infty_{p,q}(\bar\Om)\cap \T{Dom}(\dib^*)$. 
Moreover,  the   finite dimensionality and smoothness of harmonic
forms force that 
\begin{equation}\label{eqn:HstoH0}
\no{H_{p,q}(u)}_s\le c_s\no{H_{p,q}u}_0\le c_s\no{u}_0.
\end{equation}
Thus, \begin{equation}\label{eqnL:Hs estimate fo d and dbar}
\no{u}^2_s\le c_s(\no{\dib u}^2_s+\no{\dib^* u}^2_s+\no{u}_0^2),
\end{equation}
holds for any $u\in C^\infty_{p,q}(\bar\Om)\cap \T{Dom}(\dib^*)$.\\

{\bf Step 2: Estimate  $\no{\dib\dib^* u}^2_s+\no{\dib^* \dib u}^2_s+\delta^2\no{u}^2_{s+2}$.} 
Using \cite[Lemma 2.2]{Str10} as in \cite[(5.47)]{Str10} and letting $\vartheta$ be the formal adjoint of $\dbar$,  
we  obtain (inductively)
\begin{eqnarray}\label{eqn:step1a}\begin{split}
&\no{\dib^*\dib u}^2_s+\no{\dib\dib^* u}^2_s \\ 
&\le c_{s} \left(\no{\dib\dib^*\dib u}^2_{s-1}
+\no{\vartheta\dib\dib^* u}^2_{s-1}+\no{\dib^*\dib u}^2_{s-1}+\no{\dib\dib^* u}^2_{s-1}+\no{T^s\dib^*\dib u}_0^2
+\no{T^s\dbar\dib^* u}_{0}^2\right)\\
&\le c_{s} \left(\no{\Box^\delta u}^2_{s}+A_\delta^{s-1}(u)+\no{T^{s}\dib^*\dib u}_0^2+\no{T^{s}\dbar\dib^* u}_0^2+\delta^2 \no{u}^2_{s+2}\right).
\end{split}\end{eqnarray}
Here, the last inequality follows by 
\begin{align*}
\no{\dbar\dbars\dbar u}^2_{s-1}+\no{\vartheta\dib\dib^* u}^2_{s-1}&=\no{\dib\Box u}^2_{s-1}
+\no{\vartheta\Box u}^2_{s-1}\\
&\le c\no{\Box u}^2_s\le c\left(\no{\Box^\delta u}^2_s+\delta^2\no{T^*T u}^2_s\right)\le  c\left(\no{\Box^\delta u}^2_s+\delta^2\no{u}^2_{s+2}\right).
\end{align*}
To estimate $\delta^2\no{u}^2_{s+2}$, we use Lemma~\ref{lmm:Hs Ts}(ii) with $T_\epsilon$ replaced by $T$ to estimate
\begin{eqnarray}\label{eqn:step1b}
\begin{split}
\no{u}^2_{s+2}&\le c_s\left(\no{\Box u}^2_{s}+\no{u}^2_{s+1}+ \no{T^{s+2}u}_0^2\right)\\
&\leq c_s\left(\no{\Box^\delta u}_s^2+\delta^2 \no{T^*Tu}^2_s+\no{u}^2_{s+1}+ \no{T^{s+2}u}_0^2\right).
\end{split}
\end{eqnarray}
We observe that $\delta^2 \no{T^*Tu}^2_s\le c\delta^2 \no{u}^2_{s+2}$ and it is absorbed by the LHS for sufficient small $\delta$. Moreover,  
\begin{eqnarray}\label{eqn:step1c}
\begin{split}
\no{T^{s+2}u}_0^2\le c\left(\no{T^sT^*T u}_0^2+\no{u}_{s+1}^2\right),
\end{split}
\end{eqnarray}
since $T=T^*+a$ with $a\in C^\infty(\bar\Om)$.  Thus, from \eqref{eqn:step1a}, \eqref{eqn:step1b}, and \eqref{eqn:step1c}, we have 
\begin{eqnarray}\label{eqn:step1d}
\begin{split}
\no{\dib\dib^* u}^2_s+\no{\dib^* \dib u}^2_s+&\delta^2\no{u}^2_{s+2}\le c_s\Big(\no{\Box^\delta u}^2_{s}+A_\delta^{s-1}(u)+\delta^2  \no{u}_{s+1}^2\\
&+\no{T^{s}\dib^*\dib u}_0^2+\no{T^{s}\dib\dib^* u}_0^2+\delta^2\no{T^sT^*Tu}_0^2\Big)\\
&=c_s\Big(\no{\Box^\delta u}^2_{s}+A_\delta^{s-1}(u)+\delta^2  \no{u}_{s+1}^2+\no{T^s \Box^\delta u}_0^2\\
&-2\Rre\left\{(T^s\dib\dib^* u,T^s\dib^*\dib u)+\delta(T^s\dib\dib^* u,T^sT^*Tu)+\delta (T^s\dib^*\dib u,T^sT^*Tu)\right\}\Big)\\
\end{split}
\end{eqnarray}
Since applying $T^s$ to a form $u \in C^\infty_{p,q}(\bar\Om)\cap\Dom(\Box)$ does not affect the boundary 
$\dib$-Neumann condition of $u$, we can commute and integrate by parts the $\dib$, $\dib^*$ and $T^*$ terms 
in the inner product pieces of $-2\Rre(\dots )$ to obtain 
\begin{eqnarray}
\label{eqn:step1e}
\begin{split}
&-2\T{Re }\left((T^s\dib\dib^* u,T^s\dib^*\dib u)+\delta(T^s\dib\dib^* u,T^sT^*Tu)+\delta (T^s\dib^*\dib u,T^sT^*Tu)\right)\\
&=-2\delta\left(\no{T^{s+1}\dib u}_0^2+\no{T^{s+1}\dib^* u}_0^2\right)+\T{good terms}\le \T{good terms}
\end{split}
\end{eqnarray}
where 
\begin{eqnarray}
\label{eqn:step1f}
\begin{split}
\T{good terms}\le c_s\left(\no{\dib u}_s\no{\dib\dbars u}_s+\delta\no{\dib^*u}_s\no{u}_{s+2}+\delta\no{\dib u}_s\no{u}_{s+2}\right)
\end{split}
\end{eqnarray}
Indeed, 
\begin{eqnarray*}
\begin{split}
(T^s\dbar\dbars u, T^s \dbars\dbar u)&=(T^s\dbar\dbar\dbars u, T^s\dib u)+(T^s\dbar\dbars u, [T^s, \dbars]\dbar u)
+([\dbar,T^s]\dbar\dbars u, T^s \dbar u);\\
(T^s\dib\dib^* u,T^sT^*Tu)&=\no{T^{s+1}\dib^* u}_0^2+(T^s\dib^* u,[T^s,T^*]T\dib^* u)+(T^s\dib^* u,[\dib^*,T^sT^*T]u)+([T^s,\dib]\dib^* u,T^sT^*Tu);\\
(T^s\dib^*\dib u,T^sT^*Tu)&=\no{T^{s+1}\dib u}_0^2+(T^s\dib u,[T^s,T^*]T\dib u)+(T^s\dib u,[\dib,T^sT^*T]u)+([T^s,\dib^*]\dib u,T^sT^*Tu).
\end{split}
\end{eqnarray*}
Using the \eqref{eqn:sc-lc} inequality for the upper bound of ``good terms" we can absorb $\no{\dib^*\dib u}^2_s$ and $\delta^2\no{u}_{s+2}^2$ by the   LHS of \eqref{eqn:step1d}. This gives us
\begin{equation}\label{eqn:step1finnal}
\no{\dib\dib^* u}^2_s+\no{\dib^* \dib u}^2_s+\delta^2\no{u}^2_{s+2}
\le  c_s\Big(\no{\Box^\delta u}^2_{s}+A_\delta^{s-1}(u)+\no{\dib u}_s^2+\no{\dib^*u}_s^2+ \delta\no{u}_{s+1}^2\Big)
\end{equation}
The previous inequality is true with $\delta^2  \no{u}_{s+1}^2$ but we only require $\delta\no{u}_{s+1}^2$.
\begin{remark}\label{rmk:no need hypothesis} We remark that in the the proof of \eqref{eqn:step1finnal}, 
we do not require any specific hypothesis of Theorem~\ref{thm:Khanh-Andy1} beyond the pseudoconvexity of $\Om$.
\end{remark}

{\bf Step 3: Estimate $\mathcal A_\delta^{s}(u)$.} By the estimate of $\no{u}_s^2$ and  $\no{\dib\dib^* u}^2_s+\no{\dib^* \dib u}^2_s+\delta^2\no{u}^2_{s+2}$ in Steps 1 and 2, we have 
\begin{eqnarray}\label{eqn:step20}\begin{split}
\mathcal A_\delta^{s}(u)\le c_s\left(\no{\dib u}_s^2+\no{\dib^*u}_s^2+\delta  \no{u}_{s+1}^2+\no{\Box^\delta u}^2_{s}+A_\delta^{s-1}(u)\right).
\end{split}\end{eqnarray}
To estimate $\no{\dib u}_s^2+\no{\dib^*u}_s^2+\delta  \no{u}_{s+1}^2$, we  first use Lemma~\ref{lmm:Hs Ts}(i) for $u$ replaced by $\dib^* u$, $\dib u$, and Lemma  ~\ref{lmm:Hs Ts}(ii), it follows 
\begin{eqnarray}\label{eqn:step2a}\begin{split}
\no{\dib^*u}^2_s+\no{\dib u}^2_s+\delta\no{u}_{s+1}^2
&\leq c_{\eps, s} \left(\no{\dib\dib^* u}^2_{s-1}+\no{\dib^*\dib u}^2_{s-1}+\no{\dib^*u}^2_{s-1}
+\no{\dib u}^2_{s-1}+\delta\no{\Box u}^2_{s-1}+\delta \no{u}^2_s\right)\\
&+c_s\left(\no{T^s_\eps\dib^*u}_0^2+\no{T^s_\eps\dib u}_0^2+\delta \no{T^{s+1}_\eps u}_0^2\right)\\
&\leq c_{\eps, s}\left(\mathcal A_\delta^{s-1}(u)+\delta \no{\Box u}^2_{s-1}+\delta \no{u}_s^2\right)\\
&+c_s\left(\no{T^s_\eps\dib^*u}_0^2+\no{T^s_\eps\dib u}_0^2+\delta \no{T^{s+1}_\eps u}_0^2\right).
\end{split}\end{eqnarray}
Since $T^s_\eps u\in\Dom(\dib^*)$, we have
\begin{align*}
\no{T^s_\eps\dib^*u}_0^2=&([T^s_\eps,\dib^*]u,T^s_\eps\dib^*u)+(T^s_\eps u,[\dib,T^s_\eps]\dib^*u)+(T^s_\eps u,T^s_\eps\dib \dib^*u)\\
&=-(sh_\eps^{-1}\alpha_\eps\# T_\eps^s u, T^s_\eps\dib^*u)
-(T^s_\eps u,sh_\eps^{-1}\bar \alpha_\eps\we T_\eps^s\dib^*u)+(T^s_\eps u,T^s_\eps\dib \dib^*u)\\
&+(T^s_\eps u, \mathcal X_{\eps,s} \dib^* u)-(\mathcal Y_{\eps,s}u,T_\eps^s \dib^* u) \\
&=(T^s_\eps u,T^s_\eps\dib \dib^*u)-2s(h_\eps^{-1} \alpha_\eps\# T_\eps^s u, T^s_\eps\dib^*u)
+(T^s_\eps u, \mathcal X_{\eps,s} \dib^* u)-(\mathcal Y_{\eps,s}u,T_\eps^s \dib^* u) \\
\end{align*}
Using the \eqref{eqn:sc-lc} inequality, we may absorb the $T^s_\eps \dib^*u$ terms, so that
\begin{equation}\label{eqn:step2b}
\no{T^s_\eps\dib^*u}_0^2\le 
c\left(\Rre(T^s_\eps u,T^s_\eps\dib \dib^*u)+s^2\no{\alpha_\eps\# T_\eps^s u}_0^2+\no{T^s_\eps u}_0\no{\mathcal X_{\eps,s} \dib^* u}_0
+\no{\mathcal Y_{\eps,s}u}_0^2\right)
\end{equation}	
Since both $T^s_\eps u, T^s_\eps \dib u\in \T{Dom}(\dib^*)$, a similar calculation shows that
\begin{eqnarray}\label{eqn:step2c}\begin{split}
\no{T^s_\eps\dib u}_0^2
&=(T^s_\eps u,T^s_\eps\dib^* \dib u)-2s(h_\eps T_\eps^s u, \alpha_\eps\# T^s_\eps\dib u)
-(\mathcal X_{\eps,s}u,T_\eps^s \dib u)+ (T^s_\eps u, \mathcal Y_{\eps,s} \dib u)\\
&\leq c\big(\Rre (T^s_\eps u,T^s_\eps\dbars\dbar u)+s\no{\alpha_\eps\# T_\eps^s \dib u}_0 \no{T_\eps^s u}_0+
\no{\mathcal X_{\eps,s}  u}_0^2+\no{T_\eps^s u}_0\no{\mathcal Y_{\eps,s}\dib u}_0\big). \\
\end{split}\end{eqnarray}
Since $T_\eps=h_\eps T+\bar X_\eps+Y_\eps$, where $X_\eps,Y_\eps\in T^{1,0}(\Om)$ and $|h_\eps|\approx 1$, it follows from a commutator and 
integration by parts argument that
\begin{eqnarray}\label{eqn:step2d}\begin{split}
\no{T_\eps^{s+1}u}_0^2
&\leq3\left(\no{T_\eps^s h_\eps Tu}_0^2+\no{T_\eps^s \bar X_\eps u}_0^2+\no{T_\eps^s Y_\eps u}_0^2\right)\\
&\leq c\no{T^s_\eps T u}_0^2+c_{\eps, s}\left(\no{u}^2_s+\no{\bar X_\eps u}^2_s+\no{Y_\eps u}^2_s\right)\\
&\leq c \Rre (T^s_\eps T^*T u, T^s_\eps u)+c_{\eps,s}\left(\no{\dib u}_s^2+\no{\dib^*u}_s^2+\no{u}_{s+1}\no{u}_s+\no{u}^2_s\right).
\end{split}\end{eqnarray}
From \eqref{eqn:step20}-\eqref{eqn:step2d}, we have therefore established that
\begin{eqnarray}\label{eqn:step2e}\begin{split}
\mathcal A_\delta^{s}(u)
&\leq  c_s\Big(\Rre(T_\eps^s\Box^\delta u,T_\eps^s u)+s^2\no{\alpha_\eps\# T^s_\eps u}_0^{2}
+s\no{\alpha_\eps\# T_\eps^s \dib u}_0\no{T_\eps^s u}_0\\
&+\no{T^s_\eps u}_0\left(\no{\mathcal X_{\eps,s}\dib^*u}_0
+\no{\mathcal Y_{\eps,s}\dib u}_0\right)+\left(\no{\mathcal X_{\eps,s}u}_0^2+\no{\mathcal Y_{\eps,s}u}_0^2\right)\Big)\\
&+c_{\eps, s}\left(\mathcal A_\delta^{s-1}(u)+\no{\Box^\delta u}_s^2+\delta^3\no{u}^2_{s+1}+\delta\no{\dib u}^2_s
+\delta\no{\dib^* u}_s^2+\delta\no{u}_{s+1}\no{u}_s+\delta \no{u}_s^2\right).
\end{split}\end{eqnarray}
By (\ref{eqn:sc-lc}),
$$ \delta c_{\eps,s}\no{u}_{s+1} \no{u}_s\le \frac{\delta}{4}\no{u}^2_{s+1}+\delta c^2_{\delta,\eps}\no{u}^2_{s}$$ 
the term $\frac{\delta}{4}\no{u}^2_{s+1}$ can be absorbed by the   LHS of \eqref{eqn:step2e}. 
Furthermore, there exists $\delta_{\eps,s}$ such that for any $0\le \delta \le \delta_{\eps,s}$, the sum
$$\delta c_{\eps,s}\no{\dib u}^2_s+\delta c_{\eps,s}\no{\dib^* u}_s^2+ \delta^3 c_{\eps,s}\no{u}_{s+1}^2+\delta(c_{\eps,s}+c^2_{\eps,s})\no{u}_s^2$$
can be absorbed by the LHS of \eqref{eqn:step2e}. We again use (\ref{eqn:sc-lc}) on the terms 
involving $\no{T^s_\eps u}_0$ in the first and second lines of \eqref{eqn:step2e} to obtain
\begin{eqnarray}\label{eqn:step2f}\begin{split}
\mathcal A_\delta^{s}(u)&\leq   c_s\left(\no{\alpha_\eps\# T^s_\eps u}_0^2+\kappa^{-1}\no{\alpha_\eps\# T_\eps^s \dib u}_0^2\right)+\kappa\no{T_\eps^s u}_0^2\\
&+c_{\kappa,s}\left(\no{\mathcal X_{\eps,s}\dib^*u}_0^2+\no{\mathcal Y_{\eps,s}\dib u}_0^2+\no{\mathcal X_{\eps,s}u}_0^2
+\no{\mathcal Y_{\eps,s}u}_0^2\right) +c_{\eps,s,\kappa}\left(\no{\Box^\delta u}^2_{s}+\mathcal A^{s-1}_\delta (u)\right)\\
&\leq   c_s\left(\no{\alpha_\eps\# T^s u}_0^2+\kappa^{-1}\no{\alpha_\eps\# T^s \dib u}_0^2\right)\\
&+\kappa\no{b_\eps^sT^s u}_0^2
+c_{\eps,s,\kappa}\Big(\no{\mathcal X_{\eps,s}\dib^*u}_0^2+\no{\mathcal Y_{\eps,s}\dib u}_0^2
+\no{\mathcal Z_{\eps,s}\dib u}_0^2+\no{\mathcal X_{\eps,s}u}_0^2\\
&+\no{\mathcal Y_{\eps,s}u}_0^2+\no{\mathcal Z_{\eps,s}u}_0^2+\no{\Box^\delta u}^2_{s}+\mathcal A^{s-1}_\delta (u)\Big).
\end{split}\end{eqnarray}
The last two lines are bounded by 
\begin{eqnarray}\label{eqn:step2g}\begin{split}
&c_s\kappa\no{u}^2_s
+c_{\eps,s,\kappa}\Big(\no{\dib u}_s\no{\dib u}_{s-1}
+\no{\dib^*u}_s\no{\dib^*u}_{s-1}+\no{u}_s\no{u}_{s-1}+\no{\Box^\delta u}^2_{s}+\mathcal A^{s-1}_\delta (u)\Big)\\
&\le \kappa c_s\left(\no{\dib u}^2_s+\no{\dib^* u}^2_s+\no{u}^2_s\right)
+c_{\eps,s,\kappa}\Big(\no{\Box^\delta u}^2_{s}+\mathcal A^{s-1}_\delta (u)\Big).
\end{split}\end{eqnarray}
Thus, for sufficiently small $\kappa$,  
$ \kappa c_s\left(\no{\dib u}^2_s+\no{\dib^* u}^2_s+\no{u}^2_s\right)$  will be absorbed by $A_\delta^s(u)$. 
Putting our estimates together, we obtain 
 
\begin{equation}\label{eqn:step2h}
\mathcal A_\delta^{s}(u)
\leq  c_s\left(\no{\alpha_\eps\# T^s  u}_0^2+\no{\alpha_\eps\# T^s \dib u}_0^2\right)
+c_{\eps,s}\Big(\no{\Box^\delta u}^2_{s}+\mathcal A^{s-1}_\delta (u)\Big).
\end{equation}
We now use $\alpha\#$-compactness estimate for $T^s u$ and $T^s\dib u$ (which is justified
since $u\in\Dom(\Box)$ and $T$ is tangential so both $T^su,T^s\dbar u\in\Dom(\dbars)$), 
\begin{eqnarray}\label{eqn:step2i}\begin{split}
&\no{\alpha_\eps\# T^s  u}_0^2+\no{\alpha_\eps\# T^s \dib u}_0^2 \\
&\leq \epsilon \left(\no{\dib T^s u}_0^2+\no{\dib^* T^s u}_0^2+\no{\dib T^s\dib u}_0^2+\no{\dib^* T^s\dib u}_0^2\right)
+c_\eps\left(\no{T^s u}^2_{-1}+\no{T^s \dib u}^2_{-1}\right)\\
&\leq \epsilon \Big(\no{T^s  \dib u}_0^2+\no{ T^s\dib^* u}_0^2+\no{ T^s\dib^* \dib u}_0^2
+\no{[\dib, T^s] u}_0^2+\no{[\dib^*,T^s] u}_0^2+\no{[\dib, T^s]\dib u}_0^2+\no{[\dib^*, T^s]\dib u}_0^2\Big)\\
&+c_{\eps,s}\left(\no{u}^2_{s-1}+\no{\dib u}^2_{s-1}\right)\\
&\leq \eps c_s\Big(\no{\dib u}_s^2+\no{\dib^* u}_s^2+\no{\dib^*\dib u}^2_s+\no{u}_s^2\Big)+c_{\eps,s}\mathcal A_\delta^{s-1}(u).
\end{split}\end{eqnarray}

For sufficiently small  $\eps$, the term $\eps c_s\Big(\no{\dib u}_s^2+\no{\dib^* u}_s^2+\no{\dib^*\dib u}^2_s+\no{u}_s^2\Big)$ can be absorbed by $A_\delta^{s}(u)$. This completes the proof of \eqref{eqn:induction}. 
The estimate for $A_\delta^0(u)\le c\left( \no{\Box^\delta u}^2_0+\no{u}_0\right)$ follows easily by an integration by parts following an
application of (\ref{eqn: T esilon 1}) and (\ref{eqn: T epsilon s}).
\end{proof}

\begin{remark} A very similar computation to the estimate of $\|\dbars u\|_s^2 + \|\dbar u\|_s^2+\delta\|u\|_{s+1}^2$ 
starting in \eqref{eqn:step2a} but with $\delta=0$ produces the inequality
\begin{equation}\label{eqn:good u,dbar u, dbars u with inner product}
\|u\|_s^2 + \|\dbars u\|_s^2 + \|\dbar u\|_s^2 
\leq c_s \Rre\big(T^s_\eps \Box u,T^s_\eps u\big) + c_{\eps,s}\big(\|\Box u\|_{s-1}^2 + \|u\|_0^2\big)
\end{equation}
Indeed, from \eqref{eqn:step2a}, we have
\begin{align*}
\|\dbars u\|_s^2 + \|\dbar u\|_s^2
\leq c_s\big(\no{T^s_\eps\dib^*u}_0^2+\no{T^s_\eps\dib u}_0^2\big) + c_{\eps, s}\mathcal A_0^{s-1}(u).
\end{align*}
Adding together \eqref{eqn:step2b} and \eqref{eqn:step2c} and bounding the error terms by \eqref{eqn:step2i} and 
\eqref{eqn:X,Y,Z ests} and the small constant/large constant inequality (\ref{eqn:sc-lc}) and Lemma \ref{lmm:Hs Ts}
and \eqref{eqn:induction}, we obtain
\begin{align*}
\no{T^s_\eps\dib^*u}_0^2+\no{T^s_\eps\dib u}_0^2
&\leq C_s\Big[ \Rre \big(T^s_\eps \Box u,T^s_\eps u\big) + s^2\|\alpha_\eps \# T^s_\eps u\|_0^2
+ \|T^s_\eps u\|_0\big(\|\opX_{\eps,s}\dbars u\|_0 + \|\opY_{\eps,s}\dbar u\|_0\big) \\
&+ \|\opX_{\eps,s}u\|_0^2 + \|\opY_{\eps,s}u\|_0^2 + s\|\alpha_\eps\#T^s_\eps\dbar u\|_0\|T^s_\eps u\|_0\Big]
\end{align*}
and proceed as above.
\end{remark}

We are now ready to prove Theorem~\ref{thm:Khanh-Andy1}. 
\begin{proof}[Proof of Theorem~\ref{thm:Khanh-Andy1}] We prove this theorem by a
downward induction on $q$, for $0\le q\le n$. 
By Lemma~\ref{lmm:degree extension}, the hypothesis of Theorem~\ref{thm:Khanh-Andy1} holds for forms of degree $(p,q)$ with 
$0\le p\le n$ and $q_0\le q\le n$. In the top degree, $P_{p,n}=I$ is exactly regular. Therefore, we assume that $P_{p,q'}$ 
is exactly regular for $q_0 \leq q \leq q' \leq n-1$.
By Theorem~\ref{thm:apriori thm1} and Theorem~\ref{thm:regularization},  for any $q \leq q' \leq n$,
$\H_{p,q'}(\Om)\subset C^\infty_{p,q'}(\bar\Om)$ 
and  $N_{p,q'}$, $\dib N_{p,q'}$, $\dib^* N_{p,q'}$, $\dib^*\dib N_{p,q'}$, and  $\dib\dib^* N_{p,q'}$ are exactly regular. 
That $N_{p,q}\dib^*$, $\dib N_{p,q}\dib^*$ are exactly regular follows from the equalities 
$N_{p,q}\dib^* =\dib^* N_{p,q+1}$, $\dib N_{p,q}\dib^* =\dib \dib^* N_{p,q+1}$, 
and $\dib^* N_{p,q+1}$,  $\dib \dib^* N_{p,q+1}$ are exactly regular by induction result. 
 
Finally, we prove that $N_{p,q}\dib$ and $\dib^* N_{p,q}\dib$ are exactly regular and 
hence the next step of the induction assumption holds, namely, the Bergman projection $P_{p,q-1}=I-\dib^* N_{p,q}\dib$ is exactly regular. 
Let $\vp\in C^\infty_{p,q-1}(\bar\Om)$,  the regularity of $N_{p,q}$ implies $N_{p,q}\dib \vp  \in C^\infty_{p,q}(\bar\Om)\cap \T{Dom}(\dib^*)$.  
Using \eqref{eqnL:Hs estimate fo d and dbar} for $u=N_{p,q}\dib \vp$, we see that
\begin{eqnarray}\label{eqn:new thm1}\begin{split}
\no{N_{p,q}\dib \vp}^2_s
&\leq c_s\left(\no{\dib N_{p,q}\dib \vp}^2_s+\no{\dib^* N_{p,q}\dib \vp}^2_s+\no{N_{p,q}\dib \vp}_{0}\right)\\
&\leq c_s\left(\no{\dib^* N_{p,q}\dib \vp}^2_s+\no{ \vp}_0^2\right),
\end{split}\end{eqnarray}
where the last inequality follow by $\dib N_{p,q}\dib \vp=N_{p,q+1}\dib \dib \vp=0$ and the $L^2$-basic estimate.

By using the estimate \eqref{eqn:good u,dbar u, dbars u with inner product} with $u=N_{p,q}\dbar\varphi\in C^\infty_{0,q}(M)$ we obtain
\begin{align*}
\no{N_{p,q}\dbar\varphi}_s^2+\no{\dbar^*N_{p,q}\dbar\varphi}_s^2 
&\le c \Rre(T^s_\eps \Box  N_{p,q}\dbar \varphi, T^s_\eps N_{p,q}\dbar\varphi)+c_{\eps,s}\left(\no{\Box N_{p,q}\dbar\varphi}^2_{s-1}
+\no{N_{p,q}\dbar\varphi}^2_{L^2}\right)\\
&\le c\Rre(T^s_\eps(I-H_{p,q})  \dbar \varphi, T^s_\eps N_{p,q}\dbar\varphi)+c_{\eps,s}\left(\no{(I-H_{p,q}) \dbar\varphi}^2_{s-1}+\no{\varphi}^2_{L^2}\right)\\
&\le c\Rre\Big((T^s_\eps \varphi, T^s_\eps \dbar^*N_{p,q}\dbar\varphi)+([T^s_\eps ,\dbar]\varphi,  T^s_\eps N_{p,q}\dbar\varphi)+(T^s_\eps\varphi,[\dbar^*,T^s_\eps] N_{p,q}\dbar\varphi)\\
&-(T^*_\eps T^{s}_\eps H_{p,q}\dbar\varphi, T^{s-1}_\eps N_{p,q}\dbar\varphi) \Big)+c_{\eps,s}\left(\no{\dbar\varphi}^2_{s-1}+\no{\varphi}^2_{L^2}\right)\\
&\le\T{sc}\left(\no{N_{p,q}\dbar\varphi}_s^2+\no{\dbar^*N_{p,q}\dbar\varphi}_s^2\right)
+c_\eps\T{lc}\left(\no{\varphi}_s^2+\no{H_{p,q}\varphi}_{s+1}^2+ \no{H_{p,q}\dbar\varphi}_{s+1}^2\right).
\end{align*} 
We control the $H_{p,q}$ terms by \eqref{eqn:HstoH0}, and the proof is complete.
\end{proof}

\section{Proof of Theorem \ref{thm:Khanh-Andy2} and Theorem \ref{thm:Khanh-Andy3}}
\label{sec:Proofs}
In this section, we study the regularity of the $\dib$-Neumman problem in domains $\Om\subset M$ when the ambient manifold $M$ is not necessarily Stein.  
That means there is no strictly plurisubharmonic function  acting on $(p,q)$-forms in a neighborhood of the boundary $b\Om$.  However, we assume that the $L^2$ basic estimate holds or 
$M$ is a K\"ahler manifold with positive  holomorphic bisectional curvature.

\subsection{Proof of Theorem \ref{thm:Khanh-Andy2}}
\begin{theorem}\label{thm:apriori thm2}  Assume that the hypothesis of Theorem~\ref{thm:Khanh-Andy2} holds for forms of degree $(p,q)$.  
Then, for any $s\in \mathbb N$,  there exists $\delta_s>0$  such that the \emph{a priori} estimate for the elliptic operator 
$\Box^{\delta}:=\Box+\delta T^*T$,	
	\begin{equation}
	\no{u}^2_s+\no{\dib u}^2_s+\no{\dib^* u}^2_s+\no{\dib\dib^* u}^2_s+\no{\dib^*\dib u}^2_s\le c_s\left(\no{\Box^{\delta}u}_s^2+\no{u}_0^2\right),
	\end{equation}
	holds for any $u\in C^\infty_{p,q}(\bar\Om)\cap \T{Dom}(\Box)$ and any $\delta\in [0,\delta_{s})$.
\end{theorem}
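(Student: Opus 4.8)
The plan is to mirror the structure of the proof of Theorem~\ref{thm:apriori thm1}, but with the simplifications afforded by the absence of the $\delta^2\|u\|_{s+2}^2$ and $\delta\|u\|_{s+1}^2$ terms (which were needed there to control higher-degree harmonic spaces via the Boas--Straube identity \eqref{eqn:equality}), and the complications caused by not having Theorem~\ref{thm:L^2-theory} available---there is no strictly plurisubharmonic function, so the weighted theory and the $L^2$ basic estimate in adjacent degrees are gone. Instead, we have the \emph{stronger} hypothesis \eqref{eqn:main2}, which already contains $\|u\|_0^2$ on the left and has an $\epsilon^{-1}$ in front of \emph{both} $\|\bar\alpha_\eps\wedge u\|_0^2$ and $\|\alpha_\eps\#u\|_0^2$. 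First I would argue by induction on $s$, setting $\mathcal B^s_\delta(u) := \|u\|_s^2+\|\dbar u\|_s^2+\|\dbars u\|_s^2+\|\dbar\dbars u\|_s^2+\|\dbars\dbar u\|_s^2$, and establishing $\mathcal B^s_\delta(u)\le c_s\|\Box^\delta u\|_s^2 + \mathcal B^{s-1}_\delta(u)$ for $s\ge 1$ together with $\mathcal B^0_\delta(u)\le c(\|\Box^\delta u\|_0^2+\|u\|_0^2)$, the latter coming from integration by parts after applying \eqref{eqn: T esilon 1} and \eqref{eqn: T epsilon s} just as at the end of the proof of Theorem~\ref{thm:apriori thm1}.

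For the inductive step, I would follow Steps~2 and~3 of the proof of Theorem~\ref{thm:apriori thm1} essentially verbatim down through \eqref{eqn:step2h}, which required (by Remark~\ref{rmk:no need hypothesis}) only pseudoconvexity; this reduces matters to bounding $c_s\big(\|\alpha_\eps\# T^s u\|_0^2 + \|\alpha_\eps\# T^s\dbar u\|_0^2\big)$. The one genuine new point is that I must also carry along---and control---the terms $\|\bar\alpha_\eps\wedge T^s u\|_0^2$ and $\|\bar\alpha_\eps\wedge T^s\dbar u\|_0^2$, which now appear when I integrate by parts the commutators $[\dbar,T^s_\eps]$ via \eqref{eqn: T epsilon wedge} in the expansion of $\|T^s_\eps\dbar u\|_0^2$ and $\|T^s_\eps\dbars u\|_0^2$ (in the proof of Theorem~\ref{thm:apriori thm1} the $\bar\alpha_\eps\wedge$ term appeared only paired against a $\dbars$-boundary form and hence was harmless; here I keep it explicitly). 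Both $T^s u$ and $T^s\dbar u$ lie in $\Dom(\dbars)$ because $T$ is tangential and $u\in\Dom(\Box)$, so I may apply the hypothesis estimate \eqref{eqn:main2} with $u$ replaced by $T^s u$ and by $T^s\dbar u$:
\begin{align*}
\|T^s u\|_0^2 + \tfrac1\epsilon\big(\|\bar\alpha_\eps\wedge T^s u\|_0^2 + \|\alpha_\eps\# T^s u\|_0^2\big)
&\le c\big(\|\dbar T^s u\|_0^2 + \|\dbars T^s u\|_0^2\big) + C_\eps\|T^s u\|_{-1}^2,
\end{align*}
and likewise for $T^s\dbar u$. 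I then commute $\dbar$ and $\dbars$ past $T^s$, absorbing $\|[\dbar,T^s]u\|_0^2$, $\|[\dbars,T^s]u\|_0^2$, etc.\ by \eqref{eqn: T epsilon wedge}--\eqref{eqn:T esilon conrner} and Lemma~\ref{lmm:commutators with Ts}, to arrive at a bound of the form $c\big(\|\dbar u\|_s^2 + \|\dbars u\|_s^2 + \|\dbar\dbars u\|_s^2 + \|\dbars\dbar u\|_s^2\big) + C_\eps\mathcal B^{s-1}_\delta(u)$ for the quantity on the left. The crucial point is that the coefficient $c$ here is \emph{independent of $\epsilon$} (it is the constant $c$ of \eqref{eqn:main2}); combining with the $\epsilon$-free bound $\|\alpha_\eps\#T^s u\|_0^2 + \|\bar\alpha_\eps\wedge T^s u\|_0^2 \le \epsilon\cdot(\text{that same RHS}) + \epsilon C_\eps\mathcal B^{s-1}_\delta(u)$ obtained by multiplying the displayed inequality through by $\epsilon$, I feed these into \eqref{eqn:step2h} and its analogue and absorb the $\epsilon c_s(\cdots)$ terms into $\mathcal B^s_\delta(u)$ by choosing $\epsilon$ small, then $\delta$ small. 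This closes the induction.

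The main obstacle I anticipate is \textbf{bookkeeping the $\delta$-dependence and the order of quantifiers}: because \eqref{eqn:main2} is only an $L^2$ statement, every application of it to $T^s u$ or $T^s\dbar u$ forces me to re-express $\|\dbar T^s u\|_0^2$ in terms of $\|T^s\dbar u\|_0^2$ plus commutators, and those commutators (through $h_\eps^{-1}\bar\alpha_\eps\wedge$ and $h_\eps^{-1}\alpha_\eps\#$) feed back into \emph{exactly} the norms I am trying to bound, so I must be careful that the reabsorption is legitimate and that the final constant $c_s$ does not secretly depend on $\epsilon$ after $\epsilon$ has been fixed in terms of $\delta_s$. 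A secondary technical point is confirming the $s=0$ base case and checking that the elliptic regularization machinery of the final section (Theorem~\ref{thm:regularization}) applies to $\Box^\delta$ here just as in the proof of Theorem~\ref{thm:apriori thm1}, since that is what converts this \emph{a priori} estimate into the genuine regularity statement of Theorem~\ref{thm:Khanh-Andy2}; but this should be formally identical to the earlier argument and I do not expect difficulties there.
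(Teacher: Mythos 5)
Your plan captures the right high‑level idea---apply the stronger hypothesis \eqref{eqn:main2} to $T^s_\eps u$ and use the $1/\eps$ factor to absorb the $\alpha_\eps$‑terms produced by commuting past $T^s_\eps$---but there is a concrete error in the setup that would prevent the induction from closing, together with a smaller organizational deviation from the paper's route.

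The main problem is the claim that the $\delta\|u\|_{s+1}^2$ and $\delta^2\|u\|_{s+2}^2$ terms in $\mathcal A^s_\delta(u)$ ``were needed there to control higher‑degree harmonic spaces via the Boas--Straube identity,'' so that they can be dropped here. That is a misreading: those $\delta$‑weighted terms have nothing to do with \eqref{eqn:equality}. They are intrinsic to the elliptic regularization $\Box^\delta = \Box + \delta T^*T$: in Step~2 (see the first inequality in \eqref{eqn:step1a}), the bound $\|\dbar\dbars\dbar u\|_{s-1}^2 + \|\vartheta\dbar\dbars u\|_{s-1}^2 \le c\|\Box u\|_s^2 \le c\big(\|\Box^\delta u\|_s^2 + \delta^2\|u\|_{s+2}^2\big)$ produces a $\delta^2\|u\|_{s+2}^2$ on the right, which can only be absorbed if it also sits on the left; similarly $\delta\|u\|_{s+1}^2$ enters through Lemma~\ref{lmm:Hs Ts}(ii) when transferring the $\|\cdot\|_s$‑norms to $T^s_\eps$. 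This is exactly why Remark~\ref{rmk:no need hypothesis} (which only covers Step~2, not the part through \eqref{eqn:step2h}) yields \eqref{eqn:step1finnal} with $\delta^2\|u\|_{s+2}^2$ on the left and $\delta\|u\|_{s+1}^2$ on the right. The paper's proof of Theorem~\ref{thm:apriori thm2} therefore still runs the induction on the full $\mathcal A^s_\delta(u)$ (including both $\delta$‑terms), establishing $\mathcal A^s_\delta(u)\le c_s\|\Box^\delta u\|_s^2 + \mathcal A^{s-1}_\delta(u)$, and the $\delta$‑weighted quantities are simply dropped from the \emph{conclusion} afterward because they are nonnegative. With your $\mathcal B^s_\delta(u)$ the inductive estimate would not close for $\delta>0$; and $\delta>0$ cannot be avoided, since Theorem~\ref{thm:regularization} needs the a~priori estimate for $\Box^\delta$ with $\delta>0$, uniformly as $\delta\to 0$. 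You should simply re‑use $\mathcal A^s_\delta(u)$ as is.

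A second, smaller point: the paper does not route through \eqref{eqn:step2h} and so never produces the term $\|\alpha_\eps\#T^s\dbar u\|_0^2$ that you propose to control by a second application of \eqref{eqn:main2} to $T^s\dbar u$. Instead, after reaching \eqref{eqn:As thm2 Hs} via Remark~\ref{rmk:no need hypothesis} and transferring to $T^s_\eps$‑norms with Lemma~\ref{lmm:Hs Ts} (giving \eqref{eqn:As thm2 Ts}), the paper applies \eqref{eqn:main2} \emph{once}, with $u$ replaced by $T^s_\eps u$, to produce $\|T^s_\eps u\|_0^2 + \tfrac1\eps\big(\|\alpha_\eps\#T^s_\eps u\|_0^2 + \|\bar\alpha_\eps\wedge T^s_\eps u\|_0^2\big)$ on the left (cf.\ \eqref{eqn: estimate Ts eps}, \eqref{eqn:As++}), and the $\tfrac1\eps$ factor is then used to absorb the $\alpha_\eps$‑contributions that arise from the adjoint identities \eqref{eqn:Ts dib* dib u} for $\|T^s_\eps\dbar u\|_0^2$ and $\|T^s_\eps\dbars u\|_0^2$. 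Your two‑application variant is not obviously wrong, but it is more work than necessary, and you should double‑check that $T^s\dbar u\in\Dom(\dbars)$ in the exact form needed and that no $\eps$‑dependence leaks into the final $c_s$ from the second application; the paper's single‑application organization avoids both issues cleanly.
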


\begin{proof} 
As in Theorem~\ref{thm:apriori thm1}, we let
\[
\mathcal A_\delta^s(u):=\no{u}^2_s+\no{\dib u}^2_s+\no{\dib^* u}^2_s+\delta\no{u}^2_{s+1}
+\no{\dib\dib^* u}^2_s+\no{\dib^* \dib u}^2_s+\delta^2\no{u}^2_{s+2}.
\]
and  prove that
\begin{equation}
A_\delta^s(u)\le c_s \no{\Box^\delta u}^2_s+A_\delta^{s-1}(u)
\end{equation}
for any $s\ge 1$ since it is easy to see that  $A_\delta^0(u)\le c\no{\Box^\delta u}_0^2+\no{u}_0^2$. 
We start by recalling Remark \ref{rmk:no need hypothesis} (from Step 2 of the proof of Theorem~\ref{thm:apriori thm1}) 
that we only need pseudoconvexity to bound $A_\delta^s(u)$ from above by 
\begin{equation}\label{eqn:As thm2 Hs}
A_\delta^s(u)\le c_s\left(\no{u}^2_s+\no{\dib u}^2_s+\no{\dib^* u}^2_s+\delta \no{u}^2_{s+1}+\no{\Box^\delta u}^2_s+A_\delta^{s-1}(u)\right).
\end{equation}
We use Lemma~\ref{lmm:Hs Ts} on the first four terms in RHS of \ref{eqn:As thm2 Hs} to transfer the full $\no{\cdot}_s$-norm 
to partial derivative $T^s_\eps$,
\begin{equation}\label{eqn:As thm2 Ts}
A_\delta^s(u)\le c_s\left(\no{T_\eps^su}_0^2+\no{T_\eps^s \dib u}_0^2+\no{T_\eps^s\dib^* u}_0^2
+\delta \no{T_\eps^{s+1} u}_0^2\right)+c_{\eps,s}\left(\no{\Box^\delta u}^2_s+A_\delta^{s-1}(u)\right).
\end{equation}
We now use the hypothesis
\[
\no{u}_0^2+\frac{1}{\eps}\left(\no{\alpha_\eps \# u}_0^2+\no{\bar \alpha_\eps\wedge  u}_0^2\right)\le c\left(\no{\dib u}_0^2
+\no{\dib^* u}_0^2\right)+c_\eps \no{u}^2_{-1}
\]
with $u$ replaced by $T^s_\eps u\in \Dom(\dib^*)$ to obtain
\begin{eqnarray}\label{eqn: estimate Ts eps}\begin{split}
\no{T^s_\eps u}_0^2&+\frac{1}{\eps}\left(\no{\alpha_\eps \# T^s_\eps  u}_0^2
+\no{\bar \alpha_\eps\wedge  T^s_\eps  u}_0^2\right)\le c\left(\no{\dib T^s_\eps  u}_0^2+\no{\dib^*T^s_\eps  u}_0^2\right)+c_\eps \no{T^s_\eps u}^2_{-1}\\
&\leq c\left(\no{T^s_\eps \dib   u}_0^2+\no{T^s_\eps \dib^* u}_0^2+\no{[\dib, T^s_\eps]  u}_0^2
+\no{[\dib^*,T^s_\eps]  u}_0^2\right)+c_{\eps,s} \no{ u}^2_{s-1}\\
&\leq c_s\left(\no{T^s_\eps \dib   u}_0^2+\no{T^s_\eps \dib^* u}_0^2+\no{\alpha_\eps \# T^s_\eps  u}_0^2
+\no{\bar \alpha_\eps\wedge  T^s_\eps  u}_0^2\right)+c_{\eps,s} A^{s-1}_\delta(u),
\end{split}\end{eqnarray}
where the last inequality follows by Lemma~\ref{lmm:commutators with Ts}. 
For sufficiently small $\eps$, the LHS can absorb the term $\no{\alpha_\eps \# T^s_\eps  u}_0^2+\no{\bar \alpha_\eps\wedge  T^s_\eps  u}_0^2$ 
in the last line of \eqref{eqn: estimate Ts eps}. Combining this estimate with \eqref{eqn:As thm2 Ts} produces
\begin{eqnarray}\label{eqn:As++}\begin{split}
A_\delta^s(u)+&\frac{1}{\eps}\left(\no{\alpha_\eps \# T^s_\eps  u}_0^2+\no{\bar \alpha_\eps\wedge  T^s_\eps  u}_0^2\right)\\
&\leq c_s\left(\no{T_\eps^s \dib u}_0^2+\no{T_\eps^s\dib^* u}_0^2+\delta \no{T_\eps^{s+1} u}_0^2\right)
+c_{\eps,s}\left(\no{\Box^\delta u}^2_s+A_\delta^{s-1}(u)\right).
\end{split}\end{eqnarray}
Similarly to \eqref{eqn:step2b}, \eqref{eqn:step2c}, and \eqref{eqn:step2d}, we have
\begin{eqnarray}\label{eqn:Ts dib* dib u}\begin{split}
\no{T^s_\eps\dib^*u}_0^2
&\leq c\left(\Rre(T^s_\eps u,T^s_\eps\dib \dib^*u)+s^2\no{\alpha_\eps\# T_\eps^s u}_0^2
+\no{T^s_\eps u}_0\no{\mathcal X_{\eps,s} \dib^* u}_0+\no{\mathcal Y_{\eps,s}u}_0^2\right)\\
\no{T^s_\eps\dib u}_0^2
&\leq c\left(\Rre(T^s_\eps u,T^s_\eps\dib^* \dib u)+s^2\no{\alpha_\eps\wedge T_\eps^s u}_0^2
+\no{T^s_\eps u}_0\no{\mathcal Y_{\eps,s} \dib u}_0+\no{\mathcal X_{\eps,s}u}_0^2\right)\\
\no{T_\eps^{s+1}u}_0^2
&\leq c(T^s_\eps T^*T u, T^s_\eps u)+c_{\eps,s}\left(\no{\dib u}_s^2+\no{\dib^*u}_s^2+\no{u}_{s+1}\no{u}_s+\no{u}^2_s\right)
\end{split}\end{eqnarray}
Combine \eqref{eqn:As++} and \eqref{eqn:Ts dib* dib u}, absorb $s^2\no{\alpha_\eps\# T_\eps^s u}_0^2$ and $s^2\no{\alpha_\eps\wedge T_\eps^s u}_0^2$ after choosing small $\epsilon$,  and it follows that
\begin{eqnarray}\label{eqn:final}\begin{split}
A_\delta^s(u)\le c_s\left(\no{\Box^\delta u}_0\no{u}_0^2+A^{s-1}_\delta(u)+\delta A^s_\delta(u)\right).
\end{split}\end{eqnarray}
The desired estimate follows by using the \eqref{eqn:sc-lc} inequality  for the first term and choosing $\delta<\delta_s$ for sufficiently small $\delta_s$. 
\end{proof}
Now we are ready to prove Theorem~\ref{thm:Khanh-Andy2}. 
\begin{proof}[Proof of  Theorem~\ref{thm:Khanh-Andy2}]
	The proof of Theorem~\ref{thm:Khanh-Andy2} follows immediately by of  Theorem~\ref{thm:apriori thm2} and Theorem~\ref{thm:regularization}.	\end{proof}

%
%
\subsection{Proof of  Theorem~\ref{thm:Khanh-Andy3}}
We recall that  a Hermitian form $\omega$ defined in \eqref{eqn:metric defn} is K\"ahler if it  is closed, i.e., $d\omega = 0$. A complex manifold $M$ is K\"ahler if it admits a K\"ahler form. Let $\Theta$ be the holomorphic bisectional curvature $(1,1)$-form  with respect to the K\"ahler metric $\omega$. Denote 
\[
|\bar\nabla u|^2 = \sum_{j=1}^n |\nabla_{\bar L_j} u|^2
\]
where $\{L_1, \cdots, L_n \}$ is an orthonormal  frame for $T^{1,0}M$. The next theorem is the $L^2$ ``basic identity", see, e.g. \cite[Theorem 3.1]{Sha23}.
\begin{theorem}[Bochner-Kodaira-Morrey-H\"ormander] \label{BKMH} Let $\Omega$ be a compact domain in  K\"ahler
manifold $M$ with $C^2$-smooth boundary $b\Om$. For any $(p,q)$-from $u\in \Dom(\dib)\cap \Dom(\dib^*)$, we have 
\[
\no{\dib u}^2_0+ \no{\dib^* u}^2_0 = \no{\bar\nabla u}^2_0 +(\Theta\# u,u) + \int_{b\Om} \la i\di\dib \rho \# u,u\ra dS 
\] 
where $\rho(z)$ is a signed distance function from $z$ to $b\Om$. 
\end{theorem}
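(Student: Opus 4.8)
The plan is to prove the identity by the classical Morrey--Kohn--H\"ormander integration-by-parts argument, organized covariantly via the Chern connection $\nabla$ of the K\"ahler metric, and then to pass from smooth forms to general $u\in\Dom(\dib)\cap\Dom(\dib^*)$ by density. First I would localize with a partition of unity subordinate to charts carrying an orthonormal coframe $\om^1,\dots,\om^n$ with dual frame $L_1,\dots,L_n$, and record the two covariant formulas that drive everything: since $\om$ is K\"ahler, the Chern connection is torsion-free and agrees with the Levi--Civita connection, so for every $(p,q)$-form $u$
\[
\dib u=\sum_{j=1}^n\bom^j\we\nabla_{\bar L_j}u,\qquad \vartheta u:=-\sum_{j=1}^n\om_j\#\,\nabla_{L_j}u,
\]
where $\vartheta$ is the formal adjoint of $\dib$ and agrees with $\dib^*$ on smooth forms in $\Dom(\dib^*)$; the vanishing of torsion is precisely what removes the first-order correction terms that would otherwise appear in these expressions. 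I would take $\rho$ to be the signed distance to $b\Om$ (negative inside), so that $|d\rho|=1$ near $b\Om$ and, since $b\Om$ is $C^2$, $i\di\dib\rho$ is continuous up to $b\Om$.

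Next I would compute $|\dib u|^2$ and $|\vartheta u|^2$ pointwise. Expanding $\la\bom^j\we a,\bom^k\we b\ra=\delta_{jk}\la a,b\ra-\la\om_k\#a,\om_j\#b\ra$ and using the adjointness of $\bom_k\we\cdot$ and $\om_k\#\cdot$ gives
\[
|\dib u|^2=\sum_j|\nabla_{\bar L_j}u|^2-\sum_{j,k}\la\om_k\#\nabla_{\bar L_j}u,\ \om_j\#\nabla_{\bar L_k}u\ra,\qquad |\vartheta u|^2=\sum_{j,k}\la\nabla_{L_k}u,\ \bom_k\we(\om_j\#\nabla_{L_j}u)\ra.
\]
I would then integrate $\int_\Om|\vartheta u|^2$ over $\Om$ and integrate by parts in the holomorphic covariant derivative $\nabla_{L_k}$; because $\nabla$ is metric and the volume form is $\nabla$-parallel, the formal $L^2$-adjoint of $\nabla_{L_k}$ is $-\nabla_{\bar L_k}$ plus a boundary contribution whose density is $\la\om_j\#\nabla_{L_j}u,\ u\ra$ weighted by the complex-normal covector $\di\rho$. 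In the resulting interior integral I would replace $\nabla_{\bar L_k}\nabla_{L_j}$ by $\nabla_{L_j}\nabla_{\bar L_k}+[\nabla_{\bar L_k},\nabla_{L_j}]$; the commutator is the curvature operator of $\nabla$, and after summing the contractions against $u$ it assembles exactly into $(\Theta\#u,u)$.

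Adding $\int_\Om|\dib u|^2$ and $\int_\Om|\vartheta u|^2$, the crucial cancellation is that the cross term $-\sum_{j,k}\int_\Om\la\om_k\#\nabla_{\bar L_j}u,\om_j\#\nabla_{\bar L_k}u\ra$ from $|\dib u|^2$ is exactly killed by the term coming from $|\vartheta u|^2$ after the integration by parts and the re-commutation, leaving $\sum_j\int_\Om|\nabla_{\bar L_j}u|^2=\no{\bar\nabla u}_0^2$, the curvature term $(\Theta\#u,u)$, and one boundary integral. To identify the boundary term, I would use that $u\in\Dom(\dib^*)$ forces $\di\rho\#u=0$ on $b\Om$; this makes the would-be extra boundary terms (those carrying the complex-normal component of $u$) vanish and lets the surviving integrand be rewritten, via the definition of the Levi form, as $\la i\di\dib\rho\#u,u\ra$. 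This yields the asserted identity for $u\in C^\infty_{p,q}(\bar\Om)\cap\Dom(\dib^*)$.

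The routine but delicate part is the bookkeeping across the two integrations by parts --- tracking the connection terms and the wedge/contraction re-arrangements and confirming that everything collapses to precisely the three stated terms with the correct normalization of the boundary integrand. The genuinely indispensable input, rather than an obstacle, is the K\"ahler hypothesis, used twice: once so that the covariant formulas for $\dib$ and $\vartheta$ carry no torsion corrections, and once so that $[\nabla_{\bar L_k},\nabla_{L_j}]$ is purely curvature. Finally, I would invoke the standard density lemma --- valid for domains with $C^2$ boundary --- that $C^\infty_{p,q}(\bar\Om)\cap\Dom(\dib^*)$ is dense in $\Dom(\dib)\cap\Dom(\dib^*)$ in the graph norm $u\mapsto\no{u}_0+\no{\dib u}_0+\no{\dib^*u}_0$, and pass to the limit in the identity, using the continuity of $\Theta$ and of $i\di\dib\rho$ on $\bar\Om$ to handle the curvature and boundary terms.
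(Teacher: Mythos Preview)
The paper does not prove this theorem; it simply quotes it as the $L^2$ ``basic identity'' and refers the reader to \cite[Theorem 3.1]{Sha23}. So there is no proof in the paper to compare your proposal against.

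That said, your sketch is the standard covariant derivation of the Bochner--Kodaira--Morrey--H\"ormander identity and is essentially correct. The two covariant formulas $\dib u=\sum_j\bom^j\we\nabla_{\bar L_j}u$ and $\vartheta u=-\sum_j\om_j\#\nabla_{L_j}u$ are exactly the K\"ahler identities you need (torsion-freeness of the Chern connection kills the lower-order corrections), the anticommutation identity $\la\bom^j\we a,\bom^k\we b\ra=\delta_{jk}\la a,b\ra-\la\om_k\#a,\om_j\#b\ra$ is correct, and the integration by parts in $\nabla_{L_k}$ followed by the curvature commutator $[\nabla_{\bar L_k},\nabla_{L_j}]$ is precisely how $(\Theta\#u,u)$ appears. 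The identification of the boundary integrand with $\la i\di\dib\rho\#u,u\ra$ via the condition $\di\rho\#u=0$ on $b\Om$ is the Morrey--Kohn step, and the density argument for passing from $C^\infty_{p,q}(\bar\Om)\cap\Dom(\dib^*)$ to $\Dom(\dib)\cap\Dom(\dib^*)$ is standard. One small caution in the bookkeeping: when you integrate $\no{\vartheta u}_0^2$ by parts you actually pick up \emph{two} boundary terms (one from each integration by parts needed to swap $\nabla_{L_j}$ and $\nabla_{L_k}$ across the pairing), and it is the combination of both, together with the tangential condition on $u$, that collapses to the Levi-form integral; make sure you track both when you write out the details.
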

It is well-known that in the complex projective space $\C\mathbb P^n$ with the Fubini-Study metric 
$$\omega = i \di\dib \log(1+|z|^2),$$ one has 
$$ \la \Theta\#u, u\ra= q(2n+1)|u|^2$$
for any $(0, q)$-form $u$ on $\C\mathbb P^n$ with $q\ge 1$ \cite[(3.7)]{Sha23}.

We also need the following which is true without the K\"ahler hypothesis on $M$. 
\begin{proposition}\label{prop1}
	Let $M$ be complex manifold and $\Om\subset\subset M$ be a smooth bounded pseudoconvex domain which admits a plurisubharmonic defining function for $\Om$. Then there is a constant $C$ such that for all $\epsilon > 0$  there exists a purely imaginary vector field $T_\epsilon$  with 
	\begin{equation}\label{eqn: alpha}
	C^{-1} \le |\gamma(T_\epsilon)| \le C\quad \T{and}\quad |\alpha_\epsilon|\le \epsilon \quad \text{on}\quad \bdy\Om.
	\end{equation}
\end{proposition}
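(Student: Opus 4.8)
### Proof proposal for Proposition~\ref{prop1}

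\textbf{Setup and strategy.} The plan is to exploit the fact that $\Om$ admits a plurisubharmonic defining function $\rho$, so that the Levi form $i\di\dib\rho$ is positive semidefinite in a neighborhood $U$ of $\bdy\Om$, not merely on the complex tangent space. I will construct $T_\eps$ by correcting the ``standard'' transverse vector field with a carefully chosen multiplier, exactly as in the $\C^n$ computation carried out in the introduction (the passage relating Straube's hypothesis to ours), and then translate the problem of making $|\alpha_\eps|$ small into the problem of solving an approximate $\dbar$-equation whose right-hand side is controlled by the (semidefinite) complex Hessian of $\rho$. Concretely, recall from that computation that if $T_\eps = h_\eps T$ with $T$ the normalized transverse field satisfying $\gamma(T)=1$, then on $\bdy\Om$
\[
\alpha_\eps = -\bigl(\Lie_{T_\eps}\gamma\bigr)^{1,0} = -h_\eps\,\bigl(\iota_T(d\gamma)\bigr)^{1,0} - \di h_\eps,
\]
and $\iota_T(d\gamma)^{1,0}$ is, up to the factor $|\di\rho|^{-2}$, the $(1,0)$-form $\sum_{j,k}\frac{\di\rho}{\di z_j}\frac{\di^2\rho}{\di z_k\di\z_j}\,dz_k$. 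Thus $\alpha_\eps$ is small precisely when $\di(\log h_\eps)$ cancels the complex-normal component of the Levi form of $\rho$ to within $\eps$.

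\textbf{Key steps.} First I would fix the normalization: set $\gamma = \tfrac12(\di\rho-\dib\rho)$ and pick the purely imaginary field $T$ with $\gamma(T)=1$ on a collar neighborhood $U$ of $\bdy\Om$; this is possible since $d\rho\ne 0$ there. Second, I would observe that because $\rho$ is plurisubharmonic near $\bdy\Om$, the matrix $\bigl(\frac{\di^2\rho}{\di z_j\di\z_k}\bigr)$ is positive semidefinite on $U$; write $a = \sum_{j,k}\frac{\di\rho}{\di z_j}\frac{\di^2\rho}{\di z_k\di\z_j}\,dz_k$ for the relevant $(1,0)$-form. The third and central step is to find, for each $\eps>0$, a real function $\psi_\eps$ smooth near $\bdy\Om$ with $|\di\psi_\eps + |\di\rho|^{-2} a| \le \eps$ on $\bdy\Om$ and $|\psi_\eps|$ bounded uniformly in $\eps$; then set $h_\eps = e^{\psi_\eps}$ and $T_\eps = h_\eps T$, which immediately gives $C^{-1}\le |\gamma(T_\eps)|\le C$ with $C$ independent of $\eps$ and $|\alpha_\eps|\le C\eps$ on $\bdy\Om$ (then rescale $\eps$). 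For the existence of $\psi_\eps$, I would mimic the Boas--Straube construction: on $\bdy\Om$, the obstruction to writing $a$ as $\di\psi$ is measured by the tangential component of $\dib a$ (equivalently by the complex Hessian of $\rho$ restricted to the complex tangent space), and plurisubharmonicity forces this obstruction to be small after multiplying $\rho$ by a suitable power; more precisely, replacing $\rho$ by $-(-\rho\,e^{-C|z|^2})$-type modifications, or using the standard trick that $\log(-1/\rho)$ is plurisubharmonic and its gradient dominates $a$, one produces the family $\psi_\eps = \chi(-K_\eps\log(-\rho))$ or an analogous bounded family whose $\di$-derivative absorbs $a$ up to $\eps$ on the boundary. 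Finally, I would verify the two conclusions: $|\gamma(T_\eps)| = |h_\eps| = e^{\psi_\eps} \in [C^{-1},C]$ since $\psi_\eps$ is uniformly bounded, and $|\alpha_\eps| = |\di h_\eps + h_\eps|\di\rho|^{-2}a| = h_\eps\,|\di\psi_\eps + |\di\rho|^{-2}a| \le \eps$ on $\bdy\Om$ by construction.

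\textbf{Main obstacle.} The crux is the third step: producing a \emph{uniformly bounded} family $\psi_\eps$ whose differential approximates $-|\di\rho|^{-2}a$ on the boundary to within $\eps$. This is exactly where plurisubharmonicity of the \emph{defining} function (not just of the boundary distance, nor a plurisubharmonic function on forms) is essential, because it makes $a$ an ``almost exact'' $(1,0)$-form on $\bdy\Om$; the quantitative statement is that the $\dib$-closed defect of $a$ along $\bdy\Om$ is controlled by the Levi form, and a Boas--Straube-type vector field construction then yields $\psi_\eps$ bounded independently of $\eps$. I expect this construction to parallel \cite{BoSt91} closely, with the complex-manifold setting introducing only harmless curvature corrections (absorbed into the constant $C$), since $d\gamma$ and the Levi form differ from their $\C^n$ expressions only by terms involving $d\rho$, which are handled exactly as in the introduction's computation via $u\in\Dom(\dib^*)$-type cancellations when we later apply the estimate, but here only matter pointwise on $\bdy\Om$ and contribute lower-order errors.
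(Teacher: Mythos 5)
Your reduction is the right one, and it matches what the paper is pointing at: with $T_\eps = h_\eps T$ and $h_\eps = e^{\psi_\eps}$, the computation from the introduction gives $\alpha_\eps = -h_\eps\bigl(\di\psi_\eps + |\di\rho|^{-2}a\bigr)$ on $\bdy\Om$, so the whole proposition collapses to producing a family of real functions $\psi_\eps$, uniformly bounded in $\eps$, with $\bigl|\di\psi_\eps + |\di\rho|^{-2}a\bigr| \le \eps$ on $\bdy\Om$. You also correctly pin the uniform bound on $\psi_\eps$ as the crux, since this is what keeps $|\gamma(T_\eps)| = e^{\psi_\eps}$ in $[C^{-1},C]$ independently of $\eps$.

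However, the paper's own ``proof'' is simply a triple citation --- to the main theorem of Straube--Sucheston \cite{StSu02}, to \cite[Section 5.9]{Str10}, and to \cite[Theorem 1.5]{KhRa20} (the last via the observation that $b\Om$ is a plurisubharmonic-oriented CR manifold of hypersurface type) --- with the assertion that the $\C^n$ arguments transfer to complex manifolds. Your sketch of how to produce $\psi_\eps$ does not reflect those constructions accurately. The mechanism in \cite{StSu02} (and \cite[Section 5.9]{Str10}) is a De Rham cohomology argument on the set of weakly pseudoconvex boundary points: plurisubharmonicity of $\rho$, applied through Cauchy--Schwarz on the positive semidefinite Hessian, forces the one-form $a$ to annihilate the Levi nullspace, and then the cohomology class of the associated real one-form on the boundary is shown to vanish, which is exactly what produces the bounded family $h_\eps$. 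The family $\psi_\eps = \chi(-K_\eps\log(-\rho))$ that you propose is a Diederich--Fornae{\ss}-type exhaustion construction and is not the mechanism used; it would not obviously give the required cancellation of $|\di\rho|^{-2}a$. Also, \cite{BoSt91} is the weighted defining-function approach to exact regularity; the vector-field formulation from a plurisubharmonic defining function is the content of \cite{StSu02} (with the abstract CR version in \cite{KhRa20}), so those are the references to carry out. Finally, you should be explicit that the claim is about the full modulus $|\alpha_\eps|$ (not just its complex-tangential part), since that is exactly how the proposition is applied in the proof of Theorem~\ref{thm:Khanh-Andy3}; this does follow from the cited constructions, but your sketch silently conflates the two. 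In summary: right reduction, correctly identified gap, but the proposed route through that gap is not the one the cited references actually take.
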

\begin{proof} The proof can follows exactly as in \cite[the main theorem]{StSu02} or \cite[Section 5.9]{Str10} with the space $\C^n$ replaced by the general complex manifold $M$.  Or it can follow directly by the 
authors' work in \cite[Theorem 1.5]{KhRa20} since $b\Om$ is a plurisubharmonic-oriented CR manifold of hypersurface type. 
\end{proof}

\begin{proof}[Proof of Theorem~\ref{thm:Khanh-Andy3}] Since $M$ is a K\"ahler manifold with positive  holomorphic bisectional curvature acting $(p,q)$-forms, Theorem~\ref{BKMH} implies the $L^2$ basic estimate 
\begin{equation}\label{L2 basic}
\no{u}^2_0 \le c(\no{\dib u}_0^2+\no{\dib^*u}_0^2),
\end{equation}
for any $(p,q)$-form $u\in \Dom(\dib)\cap \Dom(\dib^*)$. It follows that the harmonic space $\mathcal H_{p,q}(\Om)=\{0\}$. 

From the conclusion \eqref{eqn: alpha} in Propositon~\ref{prop1} and the continuity of $\alpha_\epsilon$, one has that for any $\epsilon$ there exists $\delta_\epsilon$ such that 
$ |\alpha_\epsilon|\le 2\epsilon $ on the strip $S_{\epsilon}=\{z\in \Om: -\delta_\epsilon\le  \rho(z)<0\}$. Hence, 
$$\frac{1}{\epsilon}\no{|\alpha_\epsilon| u}^2_{L^2(\Om)}= \frac{1}{\epsilon}\no{|\alpha_\epsilon| u}^2_{L^2(S_\epsilon)}+\frac{1}{\epsilon}\no{|\alpha_\epsilon| u}^2_{L^2(\Omega\setminus S_\epsilon)}\le 4\no{u}^2_{L^2(\Om)}+\tilde C_\epsilon\no{ u}^2_{L^2(\Omega\setminus S_\epsilon)}.$$
for some $\tilde C_\epsilon$. Using \eqref{L2 basic} and the elliptic estimate for the $\dib$-Neumann problem
\[
\tilde C_\epsilon\no{ u}^2_{L^2(\Omega\setminus S_\epsilon)}\le \no{\dib u}^2_{0}+\no{\dib^* u}^2_{0}+ C_\epsilon \no{u}^2_{-1}
\]
for some $C_\epsilon$
we obtain 
\begin{equation}\label{eqn:new1}
\no{u}_0^2+ \frac{1}{\epsilon}\no{|\alpha_\epsilon| u}^2_0 \le c\left(\no{\dib u}^2+\no{\dib^* u}^2\right)+ C_\epsilon \no{u}^2_{-1}.
\end{equation}

We also notice that the inequality  \eqref{eqn:new1} can be rewritten as the main hypothesis \eqref{eqn:main2} in Theorem~\ref{thm:Khanh-Andy2} since
$$\no{|\alpha_\epsilon| u}^2_0 = \no{\bar \alpha_\epsilon\wedge u}^2_0+\no{\alpha_\epsilon\# u}^2_0.$$
Therefore, the proof of Theorem~\ref{thm:Khanh-Andy3} is complete by using Theorem~\ref{thm:Khanh-Andy2}.
\end{proof}

\section{Elliptic Regularization}
In this section, we introduce a new version of elliptic regularization for the $\dib$-Neumann problem in a general setting. Unlike the classical version, we replace the `regular' $\delta \no{\nabla u}_0^2$ with $\delta\no{Tu}_0^2$ to simplify calculations. We believe that our version of elliptic regularization is of independent interest as well.

A proof of the following well known lemma appears in \cite[(17)]{HaRa11}, \cite[p.1077]{StZe15}, and \cite[Lemma 4]{BiSt17}.
\begin{lemma}\label{lem:lem below} Fix $0\le p\le n$ and $1\le q\le n-1$ and $t\in \R$.
	Let $\Om\subset \subset M$ be a bounded domain in a complex manifold $M$. Then the following are equivalent. 
	\begin{enumerate}
		\item[(i)]	The space of harmonic forms $\mathcal H^{t\lambda}_{p,q}(\Om)$ is finite dimensional and the $L^2$ basic estimate on the orthogonal space to $\H^{t\lambda}_{p,q}(\Om)$
		\begin{equation}
		\label{L2basic  lemma} \no{u}_{L_{t\lambda}^2}^2\le c(\no{\dib u}_{L_{t\lambda}^2}^2+\no{\dib^{*,t\lambda}u}_{L_{t\lambda}^2}^2)
		\end{equation}
		holds for all $u\in\Dom(\dib )\cap\Dom(\dib^{*,t\lambda})\cap \left(\H^{t\lambda}_{p,q}(\Om)\right)^{\perp}$.
		\item[(ii)] The $L^2$ basic estimate on $L^2_{t\lambda,(p,q)}(\Om)$
		\begin{equation}
		\label{L2basic new lemma} \no{u}_{L_{t\lambda}^2}^2\le c(\no{\dib u}_{L_{t\lambda}^2}^2+\no{\dib^{*,t\lambda}u}_{L_{t\lambda}^2}^2)+c_t\no{u}^2_{-1}.
		\end{equation}
		holds for all $u\in\Dom(\dib )\cap\Dom(\dib ^*)\cap L^2_{t\lambda,(p,q)}(\Om) $.
	\end{enumerate}
\end{lemma}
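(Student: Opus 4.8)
The plan is to establish the two implications separately, using only elementary Hilbert-space arguments together with the Rellich compactness of the inclusion $L^2_{p,q}(\Om)\hookrightarrow H^{-1}_{p,q}(\Om)$ (available since $\Om$ has compact closure in $M$). I will arrange things so that no regularity of the harmonic forms $\H^{t\lambda}_{p,q}(\Om)$ up to $b\Om$ is ever invoked; that is the one place a careless argument would break down.

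For $(i)\Rightarrow(ii)$, given $u\in\Dom(\dib)\cap\Dom(\dib^{*,t\lambda})$ I would split $u=H^{t\lambda}_{p,q}u+v$ with $v$ in the $L^2_{t\lambda}$-orthogonal complement of $\H^{t\lambda}_{p,q}(\Om)$. Since $H^{t\lambda}_{p,q}u\in\ker\dib\cap\ker\dib^{*,t\lambda}$, the form $v$ again lies in $\Dom(\dib)\cap\Dom(\dib^{*,t\lambda})$ with $\dib v=\dib u$ and $\dib^{*,t\lambda}v=\dib^{*,t\lambda}u$, and $\|u\|^2=\|H^{t\lambda}_{p,q}u\|^2+\|v\|^2$. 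The estimate \eqref{L2basic lemma} controls $\|v\|^2$ by $\|\dib u\|^2+\|\dib^{*,t\lambda}u\|^2$. For the harmonic part I would use that $\H^{t\lambda}_{p,q}(\Om)$ is finite dimensional \emph{by hypothesis}, so the $L^2_{t\lambda}$-norm and the $H^{-1}$-norm are equivalent on that subspace (any two norms on a finite-dimensional space are equivalent — no smoothness of a basis is needed); then $\|H^{t\lambda}_{p,q}u\|_0\le C_t\|H^{t\lambda}_{p,q}u\|_{-1}\le C_t(\|u\|_{-1}+\|v\|_{-1})\le C_t(\|u\|_{-1}+\|v\|_0)$, and since $\|v\|_0$ is already controlled, collecting the terms yields \eqref{L2basic new lemma}.

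For $(ii)\Rightarrow(i)$ I would first deduce finite dimensionality of $\H^{t\lambda}_{p,q}(\Om)$: if it were infinite dimensional, an $L^2_{t\lambda}$-orthonormal sequence $\{u_k\}$ in it would satisfy $1\le c_t\|u_k\|_{-1}^2$ by \eqref{L2basic new lemma} (its first two terms vanish), contradicting $u_k\to 0$ in $H^{-1}$ (an orthonormal sequence tends weakly to $0$ in $L^2$, and the inclusion $L^2\hookrightarrow H^{-1}$ is compact). With this in hand I would prove the basic estimate on $(\H^{t\lambda}_{p,q}(\Om))^\perp$ by contradiction: a sequence $u_k\in\Dom(\dib)\cap\Dom(\dib^{*,t\lambda})\cap(\H^{t\lambda}_{p,q}(\Om))^\perp$ with $\|u_k\|_0=1$ and $\|\dib u_k\|_0^2+\|\dib^{*,t\lambda}u_k\|_0^2\to 0$ would, by \eqref{L2basic new lemma}, have $\|u_k\|_{-1}$ bounded below by a positive constant for large $k$; passing to a Rellich subsequence converging in $H^{-1}$ and weakly in $L^2$ to some $u\ne 0$, closedness of $\dib$ and $\dib^{*,t\lambda}$ forces $\dib u=\dib^{*,t\lambda}u=0$, i.e.\ $u\in\H^{t\lambda}_{p,q}(\Om)$, while $u$ also lies in the closed subspace $(\H^{t\lambda}_{p,q}(\Om))^\perp$ — a contradiction.

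This is a standard functional-analytic fact, so I do not expect a serious obstacle; the only thing requiring genuine care is the interplay of the two clauses of $(i)$. The finite dimensionality is precisely what lets one trade $\|\cdot\|_{-1}$ for $\|\cdot\|_0$ on the harmonic part in $(i)\Rightarrow(ii)$ with no appeal to boundary regularity of harmonic forms, and in $(ii)\Rightarrow(i)$ finite dimensionality must be secured \emph{before} the limiting argument for the estimate on $(\H^{t\lambda}_{p,q}(\Om))^\perp$, since that argument relies on $(\H^{t\lambda}_{p,q}(\Om))^\perp$ being an honest closed subspace of $L^2_{t\lambda,(p,q)}(\Om)$.
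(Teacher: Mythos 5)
Your proof is correct and complete. The paper does not include its own proof of this lemma, instead citing \cite{HaRa11,StZe15,BiSt17}; your argument --- decomposing $u$ via $H^{t\lambda}_{p,q}$ and trading $\|\cdot\|_0$ for $\|\cdot\|_{-1}$ on the finite-dimensional harmonic part in $(i)\Rightarrow(ii)$, and a Rellich/weak-closedness contradiction argument (finite dimensionality first, then the estimate on $(\H^{t\lambda}_{p,q}(\Om))^\perp$) in $(ii)\Rightarrow(i)$ --- is precisely the standard functional-analytic route those references take, and you have correctly flagged the two places where care is required (no boundary regularity of harmonic forms is used, only finite dimensionality; and the order of the two steps in $(ii)\Rightarrow(i)$ matters).
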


The argument to prove regularity from an \emph{a priori} estimate appears in \cite{KhRa20}, but for $\dbarb$. We prove it here for completeness,
as the argument is original.
\begin{theorem}\label{thm:regularization} Let $\Om$ be a bounded domain in a $n$-dimensional complex manifold and let $0\le p\le n$, $1\le q\le n$. Assume that the following two estimates hold:
\begin{enumerate}[i.]
\item The $L^2$-basic estimate:
	\begin{equation}\label{eqn:L2 appendix}
	\no{u}_0^2\le c\left(\no{\dib u}_0^2+\no{\dib^* u}_0^2+\no{u}^2_{-1}\right)
	\end{equation}
	for any $u\in \T{Dom}(\dib)\cap \T{Dom}(\dib^*)\cap L^2_{p,q}(\Om)$, and 
\item The \emph{a priori} estimate  for $\Box^\delta=\Box+\delta T^*T$ for $\delta  \geq 0$: For any $s>0$, there exists a constant $c_s$ so that
		\begin{equation}\label{eqn:ariori appendix}
	\no{u}_s^2\le c_s\left(\no{\Box^\delta u}_s^2+\no{u}_{0}^2\right)
		\end{equation}
	 for  $u\in C^\infty_{p,q}(\bar\Om)\cap \T{Dom}(\Box)$. 
\end{enumerate}

Then $\H_{p,q}(\Om)\subset C^\infty_{p,q}(\bar\Om)$ and the $\dib$-Neumann operator $N_{p,q}$ is exactly regular.
	 
Moreover, if 		
\begin{equation}\label{eqn:ariori appendix 2}
\no{\dib u}_s^2+\no{\dib^* u}_s^2+\no{\dib^*\dib u}_s^2+\no{\dib\dib^* u}_s^2\le c_s\left(\no{\Box u}_s^2+\no{u}_0^2\right)
\end{equation}
holds for any $u\in \T{Dom}(\dib)\cap \T{Dom}(\dib^*)\cap L^2_{p,q}(\Om)$ 
then $\dib N_{p,q}$, $\dib^* N_{p,q}$, $\dib^*\dib N_{p,q}$, and  $\dib\dib^* N_{p,q}$ are exactly regular.  
\end{theorem}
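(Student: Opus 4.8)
The plan is to run the standard elliptic regularization machinery, but adapted to the operator $\Box^\delta = \Box + \delta T^*T$ used in the \emph{a priori} estimates of Theorems~\ref{thm:apriori thm1} and~\ref{thm:apriori thm2}, rather than the classical $\delta\|\nabla u\|_0^2$ form. First I would set up the regularized Hermitian form: for $\delta>0$, define
\[
Q_\delta(u,v) = (\dbar u,\dbar v) + (\dbars u,\dbars v) + \delta(Tu,Tv),
\]
on $\Dom(\dbar)\cap\Dom(\dbars)$. Since $T$ is transverse to $b\Om$, the form $Q_\delta$ is coercive in $H^1_{p,q}(\Om)$ on the subspace of forms in $\Dom(\dbars)$ (this is where one uses that $T$ plus the tangential $(0,1)$-directions, together with $\dbar,\dbars$ on $\Dom(\dbars)$, control the full gradient --- essentially Lemma~\ref{lmm:Hs Ts} with $\delta$ weight). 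Hence for each $\delta>0$ there is a bounded, self-adjoint, injective-on-the-orthocomplement-of-harmonics inverse $N^\delta_{p,q}$ with $\Box^\delta N^\delta_{p,q} = I - H_{p,q}$ on the appropriate space, and by ellipticity of $\Box^\delta$ (it is elliptic once the transverse direction is filled in) together with the $\dbar$-Neumann boundary conditions being coercive, $N^\delta_{p,q}$ maps $H^s_{p,q}(\Om)\to H^{s+?}_{p,q}(\Om)$ with a constant that \emph{a priori} blows up as $\delta\to 0$. The point of the \emph{a priori} estimate \eqref{eqn:ariori appendix} is precisely to give a bound
\[
\|N^\delta_{p,q} f\|_s \le c_s\big(\|f\|_s + \|N^\delta_{p,q}f\|_0\big)
\]
with $c_s$ \emph{independent of} $\delta\in[0,\delta_s)$, where the $\|N^\delta_{p,q}f\|_0$ term is in turn controlled by $\|f\|_0$ via the $L^2$-basic estimate \eqref{eqn:L2 appendix} (which forces $N^\delta_{p,q}$ to be uniformly $L^2$-bounded and, together with Lemma~\ref{lem:lem below}-type reasoning, controls the harmonic projection).

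Next I would carry out the limiting argument. Fix $f\in C^\infty_{p,q}(\bar\Om)$ (or more generally $f\in H^s_{p,q}(\Om)$). For $\delta>0$, $u_\delta := N^\delta_{p,q}f$ lies in $C^\infty_{p,q}(\bar\Om)\cap\Dom(\Box)$ by elliptic regularity of $\Box^\delta$, so the \emph{a priori} estimate applies to $u_\delta$: from $\Box^\delta u_\delta = f - H_{p,q}f$ and \eqref{eqn:ariori appendix},
\[
\|u_\delta\|_s^2 \le c_s\big(\|f - H_{p,q}f\|_s^2 + \|u_\delta\|_0^2\big) \le c_s\big(\|f\|_s^2 + \|f\|_0^2\big),
\]
uniformly in $\delta$, using \eqref{eqn:L2 appendix} for the $L^2$ bound and the finite-dimensionality/smoothness of $\H_{p,q}(\Om)$ (which itself follows from the $L^2$-basic estimate) to absorb the harmonic term. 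So $\{u_\delta\}$ is bounded in $H^s$ for every $s$; extracting a weakly convergent subsequence $u_{\delta_k}\rightharpoonup u$ in $H^s$ (for each $s$, by a diagonal argument), one checks that $\delta_k (Tu_{\delta_k},Tv)\to 0$ and hence $u\in\Dom(\Box)$ with $\Box u = f - H_{p,q}f$, i.e.\ $u = N_{p,q}f$. The weak lower semicontinuity of the $H^s$-norm then gives $\|N_{p,q}f\|_s \le c_s(\|f\|_s + \|f\|_0)$, which is exact regularity of $N_{p,q}$. The same subsequence argument applied to $f\in C^\infty_{p,q}(\bar\Om)$ with $f$ ranging over a dense set, combined with the smoothness of $u_\delta$ and the uniform bounds, yields $\H_{p,q}(\Om)\subset C^\infty_{p,q}(\bar\Om)$ (take $f\in\H_{p,q}(\Om)$, note $N^\delta$ applied to a harmonic form is $0$, and instead directly apply \eqref{eqn:ariori appendix} with $\delta=0$ to $u\in\H_{p,q}$ since $\Box u = 0$).

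For the last statement, the derivative operators $\dbar N_{p,q}, \dbars N_{p,q}, \dbar\dbars N_{p,q}, \dbars\dbar N_{p,q}$, I would note that they are all of the form ``(first- or second-order operator) composed with $N_{p,q}$'' applied to $f$; the cleanest route is to run the same regularization limit but now invoking \eqref{eqn:ariori appendix 2} on $u_\delta = N^\delta_{p,q}f$. Since $\Box^\delta u_\delta = f - H_{p,q}f$ and $\delta\|Tu_\delta\|_{\text{relevant}}$ terms are controlled, one gets uniform-in-$\delta$ bounds on $\|\dbar u_\delta\|_s, \|\dbars u_\delta\|_s, \|\dbars\dbar u_\delta\|_s, \|\dbar\dbars u_\delta\|_s$ in terms of $\|f\|_s + \|f\|_0$; passing to the weak limit as before and using weak lower semicontinuity gives exact regularity of each of these four operators. (One small point: \eqref{eqn:ariori appendix 2} is stated for $\Box$, not $\Box^\delta$; one either absorbs the extra $\delta T^*T$ term as in Step~2 of Theorem~\ref{thm:apriori thm1}, or simply applies \eqref{eqn:ariori appendix 2} in the limit $\delta\to0$ once $u$ is known to be smooth.) The main obstacle I expect is the uniformity in $\delta$ of the elliptic regularity of $N^\delta_{p,q}$ near $b\Om$: one must verify that the $\dbar$-Neumann boundary condition combined with the coercivity of $Q_\delta$ gives $u_\delta\in C^\infty_{p,q}(\bar\Om)$ for each fixed $\delta>0$ (so that the \emph{a priori} estimate is legitimately applicable to $u_\delta$), while being careful that one only uses the \emph{$\delta$-dependent} smoothness qualitatively and the \emph{$\delta$-independent} \emph{a priori} bound quantitatively --- this is the usual subtlety in elliptic regularization and is handled by the two-step structure (qualitative smoothness from $\Box^\delta$-ellipticity, quantitative bound from \eqref{eqn:ariori appendix}).
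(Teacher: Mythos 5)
Your overall strategy — regularize, apply the $\delta$-uniform \emph{a priori} estimate to the (smooth, by ellipticity of the regularized problem) solutions, and pass to a weak limit — is the same as the paper's, and the large-scale shape of the limiting argument is right. However, there are two genuine gaps.

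\textbf{First}, you regularize with only the single parameter $\delta$. For $\delta>0$ the operator $\Box^\delta$ with the $\dbar$-Neumann boundary conditions has kernel $\ker\dbar\cap\ker\dbars\cap\ker T$, which is a subspace of $\H_{p,q}(\Om)$ but in general a proper one, and in particular is the \emph{same} for every $\delta>0$ yet different from $\ker\Box$. So the identity you write, $\Box^\delta N^\delta_{p,q}=I-H_{p,q}$, is not correct: the projection on the right should be onto the $\Box^\delta$-harmonic space, not $\H_{p,q}(\Om)$. Consequently the equation $\Box^\delta u_\delta = f - H_{p,q}f$ is not what the construction gives you, and the weak limit $u$ need not satisfy $\Box u = f - H_{p,q}f$. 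The paper avoids this cleanly by adding a second parameter and working with $\Box^{\delta,\nu}=\Box^\delta+\nu I$; for $\nu>0$ the form $Q^{\delta,\nu}$ is strictly positive, so $\Box^{\delta,\nu}$ has trivial kernel and a globally defined bounded inverse $N^{\delta,\nu}_{p,q}$, and the mismatch of harmonic spaces never arises. One then sends $\delta\to0$ first (at fixed $\nu$) and $\nu\to0$ afterward.

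\textbf{Second}, and more seriously, your argument for $\H_{p,q}(\Om)\subset C^\infty_{p,q}(\bar\Om)$ is circular. You propose to ``directly apply \eqref{eqn:ariori appendix} with $\delta=0$ to $u\in\H_{p,q}$ since $\Box u=0$,'' but \eqref{eqn:ariori appendix} is an \emph{a priori} estimate valid only for $u\in C^\infty_{p,q}(\bar\Om)\cap\Dom(\Box)$ — you would be assuming the smoothness you are trying to prove. Likewise, when estimating $\|f-H_{p,q}f\|_s$ you invoke ``finite-dimensionality/smoothness of $\H_{p,q}(\Om)$ (which itself follows from the $L^2$-basic estimate).'' Finite-dimensionality does follow, but smoothness does not; it is precisely the first conclusion of the theorem. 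The paper's Step~2 supplies the missing argument: one takes a smooth $\varphi$ orthogonal to the first $k$ (already-smooth) basis vectors of $\H_{p,q}$ but not the $(k+1)$-st, shows $\{\|N^{0,\nu}_{p,q}\varphi\|_0\}$ must be unbounded as $\nu\to0$, normalizes $w_m=N^{0,\nu_m}_{p,q}\varphi/\|N^{0,\nu_m}_{p,q}\varphi\|_0$ (these \emph{are} smooth, so \eqref{eqn:ariori appendix} genuinely applies to them and yields $H^s$-bounds uniform in $m$), and extracts a limit $\theta$ that is smooth, has norm one, lies in $\H_{p,q}(\Om)$ because $Q(w_{m_i},w_{m_i})\to0$, and is orthogonal to $\theta_1,\dots,\theta_k$. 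This inductive construction, not a direct appeal to the \emph{a priori} estimate, is what produces a smooth basis of $\H_{p,q}(\Om)$ — and it is needed before one can prove exact regularity of $N_{p,q}$, since the bound $\|N_{p,q}f\|_s\lesssim\|f\|_s$ requires controlling $\|H_{p,q}f\|_s$, which in turn uses $\H_{p,q}(\Om)\subset C^\infty_{p,q}(\bar\Om)$.
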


\begin{proof}
Denote by $\Box^{\delta,\nu}=\Box^\delta+\nu I$. From \eqref{eqn:ariori appendix}, there is an $\nu_s$ such that for any $\nu<\nu_s$ the following estimate 
  	\begin{equation}\label{eqn:Box delta nu}
\no{u}_s^2\le c_s\left(\no{\Box^{\delta,\nu} u}_s^2+\no{u}_0^2\right)
\end{equation}
holds for any $u\in C^\infty_{p,q}(\bar\Om)\cap \T{Dom}(\Box)$. Thus, if we define
the quadratic form $Q^{\delta,\nu}$ on $H^1_{p,q}(\Om)  \cap \Dom(\dbars)$ by 
 $$Q^{\delta,\nu}(\cdot,\cdot)=Q(\cdot,\cdot)+\delta (T\cdot,T\cdot)+\nu(\cdot,\cdot)\quad \T{for } \delta,\nu \geq 0,$$
  then, for $\nu>0$
	\begin{equation}\label{eqn:L2 Q delta nu}
	\no{u}_0^2\le \frac{1}{\nu} Q^{\delta,\nu}(u,u)\quad \T{holds for all $u\in H^1_{0,q}(\Om)$.}
	\end{equation} 
	Consequently,
	$\Box^{\delta,\nu}$ is self-adjoint, invertible, and has a trivial kernel. Also, for each $\nu> 0$, the inverse $N_{p,q}^{\delta,\nu}$ satisfies 
	\begin{equation}\label{eqn:L2 G_q delta nu}
	\no{N_{p,q}^{\delta,\nu}\varphi}_{0}\le \frac{1}{\nu}\no{\varphi}_{0}, \quad \T{for all $\varphi\in L^2_{p,q}(\Om)$}.
	\end{equation} 
	When $\delta>0$, we also know 
	\begin{equation}\label{eqn:1 estimates for delta,nu op}
	\no{u}_{1}^2\le c_{\delta,\nu} Q^{\delta,\nu}(u,u)\quad \T{holds for all $u\in H^1_{p,q}(\Om)$.}
	\end{equation} 
    	\textbf{Step 1:}  If $\vp \in  C^\infty_{p,q}(\bar\Om)$ then $N_{p,q}^{0,\nu} \vp \in  C^\infty_{p,q}(\bar\Om)\cap \T{Dom}(\Box)$.
	By elliptic theory, \eqref{eqn:1 estimates for delta,nu op} implies that if $\varphi\in C^\infty_{p,q}(\bar\Om)$, 
	then $N_{p,q}^{\delta,\nu} \varphi\in C^\infty_{p,q}(\bar\Om)\cap \T{\Dom}(\Box^{\delta,\nu})$. We note that 
	$H^1_{p,q}(\Om)\cap \T{\Dom}(\Box)  = \T{\Dom}(\Box^{\delta,\nu})$. 
	We can therefore use (\ref{eqn:Box delta nu}) with $u=N_{p,q}^{\delta,\nu}\varphi$ and estimate
	\begin{align}
	\no{N_{p,q}^{\delta,\nu}\varphi}^2_{s}\le c_s\left( \no{ \Box^{\delta,\nu}N_{p,q}^{\delta,\nu}\varphi}^2_s
	+\no{N_{p,q}^{\delta,\nu} \varphi}_0^2\right)
	&=c_s\left( \no{\varphi}^2_{s}+\no{N_{p,q}^{\delta,\nu} \varphi}_{0}^2\right)\nn \\
	&\le c_s \no{\varphi}^2_{s}+c_{s,\nu}\no{\varphi}^2_{0} \label{eqn:G delta nu s in terms of s and 0}
	\end{align}  
	for any positive integer $s$. The equality in \eqref{eqn:G delta nu s in terms of s and 0}
	follows from the identity $\Box^{\delta,\nu }N_{p,q}^{\delta,\nu}=Id$ (since $\ker(\Box^{\delta,\nu} )=\{0\}$), 
	and the inequality follows by \eqref{eqn:L2 G_q delta nu} and the fact that the constants $c_s, c_{s,\nu}$ are independent of $\delta>0$. 
	
	Thus, for each integer $s \geq 0$ and fixed $\nu>0$, $\no{N_{p,q}^{\delta,\nu} \varphi}_s$ is uniformly bounded in $\delta$, therefore, given any sequence $\delta_k \to 0$, there exists a subsequence
	$\delta_{k_j}$ and $u_{\nu,\{k_j\}} \in H^s_{0,q}(\Om)$ such that $N^{\delta_{k_j},\nu}_{p,q} \varphi\to  u_{\nu,\{k_j\}}$ weakly in $H^s_{p,q}(\Om)$. 
	Consequently, by a Cantor style diagonalization argument, we may find a $(p,q)$-form $u_\nu$ and a sequence of recursively defined subsequences 
	$\delta_{k,\ell}$ so that $N^{\delta_{k,\ell},\nu}_q \varphi\to u_\nu$ 
	weakly in $H^\ell_{p,q}(\Om)$ as $k\to\infty$ for every $\ell \in\mathbb{N}$. 
	Thus, it follows that $u_\nu \in C^\infty_{p,q}(\bar\Om)$ and if we redefine $\delta_k := \delta_{k,k}$
	then $N^{\delta_k,\nu}_{p,q} \varphi\to u_\nu$ weakly in $H^\ell_{p,q}(\Om)$ for all $\ell \geq 0$. 
	Additionally,
	$N^{\delta_k,\nu}_{p,q} \varphi\to  u_\nu$ weakly in the  $Q^{0,\nu}(\cdot,\cdot)^{1/2}$-norm. This
	means that if $v\in H^2_{p,q}(\Om)$, then 
	$$\lim_{k\to\infty}Q^{0,\nu}(N^{\delta_k,\nu}_{p,q}\varphi,v) 
	=Q^{0,\nu}(u_\nu,v).$$
	On the other hand, 
	$$Q^{0,\nu}(N^{0,\nu}_{p,q}\varphi,v)  =(\varphi,v)=
	Q^{\delta,\nu}(N^{\delta,\nu}_{p,q} \varphi,v)=Q^{0,\nu}(N^{\delta,\nu}_{p,q} \varphi,v) +\delta(TN^{\delta,\nu}_{p,q} \varphi,Tv)$$
	for all $v\in H^2_{p,q}(\Om)$. It follows that
	\[
	\big|Q^{0,\nu}\big((N^{\delta,\nu}_{p,q} -N^{0,\nu}_{p,q})\varphi,v\big)\big|
	\le \delta \no{N^{\delta,\nu}_{p,q}\varphi}_0 \no{v}_{2}\le \delta c_\nu\no{\varphi}_0\no{v}_2\to 0\quad \T{as $\delta\to 0$}
	\]
	where we have again used the inequality  $\no{N^{\delta,\nu}_{p,q} \varphi}_0\le c_\nu\no{\varphi}_0$ uniformly in $\delta\geq 0$. 
	Since $\ker \Box^{\delta,\nu}=\{0\}$, it follows that $N^{0,\nu}_{p,q}\vp =  u_\nu$ and hence $N^{0,\nu}_{p,q}\vp \in C^\infty_{p,q}(\bar\Om)$.
	Moreover, we may apply \eqref{eqn:Box delta nu} with $u=N^{0,\nu}_{p,q}\varphi \in C^\infty_{p,q}(\bar\Om)\cap \T{Dom}(\Box)$ and $\delta=0$ and observe
	\begin{equation}\label{est:G0nu}
	\no{N^{0,\nu}_{p,q}\varphi}^2_{s}\le c_s\left( \no{\varphi}^2_{s}+\no{N^{0,\nu}_{p,q} \varphi}^2_0\right),\end{equation}
	holds for all $\vp\in C^\infty_{p,q}(\bar\Om)$. Next, we show that harmonic forms are smooth.

{\bf Step 2:  $\H_{p,q}(\Om) \subset C^\infty_{p,q}(\bar\Om)$}

This step follows the ideas of \cite[Section 5]{Koh73}.
By Lemma~\ref{lem:lem below}, the $L^2$ harmonic space $\H_{p,q}(\Om)$  is finite dimensional. 
	Let $\theta_1,\cdots,\theta_N\in L^2_{p,q}(\Om)$ be a basis of $\H_{p,q}(\Om)$. 
	Set $\theta_0=0$. We will prove $\theta_j\in C_{p,q}^\infty(\bar\Om)$ for all $j$ by induction. Certainly $\theta_0 \in C_{p,q}^\infty(\bar\Om)$. Assume now that
	$\theta_j\in C_{p,q}^\infty(\bar\Om)$ for $0 \leq j \leq k < N$.  
	We will construct $\theta\in \H_{p,q}(\Om)\cap C_{p,q}^\infty(\bar\Om)$ with $\no{\theta}_{0}=1$ and $(\theta,\theta_j)=0$ for $j\le k$. In this way we obtain a basis of 
	$\H_{p,q}(\Om)$ which is contained in $C^\infty_{p,q}(\bar\Om)$. Let $\varphi\in C^\infty_{0,q}(\Om)$ such that $\varphi$ is orthogonal to $\theta_j$ for $j\le k$ but  not for $\theta_{k+1}$. 
	Then, for $\nu>0$, $N^{0,\nu}_{p,q}\varphi\in C^\infty_{p,q}(\bar\Om)\cap \T{Dom}(\Box)$ and satisfies \eqref{est:G0nu}.  
	We claim that $\{\no{N^{0,\nu}_{p,q}\varphi}_{0} : 0 < \nu < 1 \}$ is unbounded. If it were bounded then by \eqref{est:G0nu} we could find a subsequence converging to a form $u\in C^\infty_{p,q}(\bar\Om)\cap \T{Dom}(\Box)$ 
	satisfying  $$Q(u,\psi)=(\vp,\psi)$$
	for all $\psi\in L^2_{p,q}(\Om)$. Setting $\psi=\theta_j$, the left-hand side is zero for all $j$ and the right-hand side is different from zero for $j=k+1$, which is a contradiction. 
	Thus the sequence $\{\no{N_{p,q}^{0,\nu}\varphi}_{0}\}$ is unbounded and hence we can find a subsequence $\{\no{N_{p,q}^{0,\nu_m}\varphi}\}$ 
	such that $\lim_{m\to \infty}\no{N_{p,q}^{0,\nu_m}\varphi}_{0}=\infty$. 
	Set $w_{m}=\frac{N_{p,q}^{0,\nu_m}\varphi}{\no{N_{p,q}^{0,\nu_m}\varphi}_{0}}$. Then $w_m\in C^\infty_{p,q}(\bar\Om)\cap \T{Dom}(\Box)$, $\no{w_m}_{0}=1$, and by \eqref{est:G0nu}
	$$\no{w_{m}}_{H^s}\le c_s \left(\frac{\no{\vp}_{H^s}}{\no{N_{p,q}^{0,\nu_m}\varphi}_0}+1\right).$$ 
	Thus, there is a subsequence of $\{w_{m_i}\}$ such that $\lim_{i\to \infty}w_{m_i}=\theta\in C^\infty_{p,q}(\Om)$. The convergence occurs
	weakly in $H^\ell_{p,q}(\Om)$ and the compact inclusion $H^\ell_{p,q}(\Om) \hookrightarrow H^{\ell-1}_{p,q}(\Om)$ forces
	norm convergence of $w_{m_i}$ to $\theta$ in $H^{\ell-1}_{p,q}(\Om)$. Thus, $\no{\theta}_0=1$. To see that $\theta\in \H_{p,q}(\Om)$, we compute
	$$Q(w_{m_i},w_{m_i})\le Q^{0,\nu_{m_i}}(w_{m_i},w_{m_i})=\frac{1}{\no{N_{p,q}^{0,\nu_{m_i}}\varphi}_{0}}(\vp, w_{m_i})\le \frac{\no{\varphi}_{0}}{\no{N_{p,q}^{0,\nu_{m_i}}\varphi}_{0}},$$
	send $i\to\infty$, and use the fact that $w_{m_i} \to \theta$ in $H^1_{p,q}(\Om)$ to
	conclude that $Q(\theta,\theta)=0$. In other words, $\theta\in \H_{p,q}(\Om)$. Finally, to prove $(\theta,\theta_j)=0$ for $j\le k$, we set $\psi=\theta_j$ and observe
	$$\nu_m(w_m,\psi)=Q^{\nu_m}(w_m,\psi)=\frac{1}{\no{N_{p,q}^{0,\nu_{m}}\varphi}_0}(\vp,\psi)=0.$$
	It follows that $w_m$ and hence $\theta$ is orthogonal to $\theta_k$ for $j=1,\dots,k$. Therefore, $\H_{p,q}(\Om)\subset C^\infty_{p,q}(\bar\Om)$.\\
	
	{\bf Step 3: $N_{p,q}:=N^{0,0}_{p,q}$ is both globally and exactly regular.}
	
We start this step by using Lemma~\ref{lem:lem below} and \eqref{eqn:L2 appendix} to observe
	$$\no{u}_0^2\le c\left(\no{\dib u}_0^2+\no{\dib^* u}_0^2\right)=c Q(u,u)$$
 and hence
	\begin{equation}\label{eqn:uniformly}
	\no{u}_0^2\le c \left( Q(u,u)+\nu \no{u}_0^2 \right)=c Q^{0,\nu}(u,u).
	\end{equation}
	for any $u\in \T{Dom}(\dib)\cap \T{Dom}(\dib^*)\cap \H^{\perp}_{(p,q}(\Om)$, where $c$ is independent of $\nu$.	
	
By the definition of $Q^{0,\nu}$ and $N_{p,q}^{0,\nu}$, we have
	$$\nu(N_{p,q}^{0,\nu}\varphi, \psi) = Q(N_{p,q}^{0,\nu}\varphi, \psi)+\nu(N_{p,q}^{0,\nu}\varphi, \psi)=Q^{0,\nu}(N_{p,q}^{0,\nu}\varphi, \psi)=(\varphi,\psi)$$
	for any $\varphi,\psi\in L^2_{p,q}(\Om)$. This calculation shows that $N_{p,q}^{0,\nu}\varphi\perp  \H_{p,q}(\Om)$ whenever $\vp\perp \H_{p,q}(\Om)$ because $Q(f,\psi) =0$ 
	 for all $\psi \in  \H_{p,q}(\Om)$ and $f \in \Dom \dbar \cap \Dom\dbars$.
Thus, if $u=N_{p,q}^{0,\nu}\varphi$ and $\varphi\perp\H_{p,q}(\Om)$, then the uniformity of \eqref{eqn:uniformly} (in $\nu >0$) implies
	$$\no{N_{p,q}^{0,\nu}\varphi}_{0}\le c\no{\varphi}_{0}.$$ 
	Combining this uniform $L^2$ estimate with \eqref{est:G0nu} yields the uniform (in $\nu>0$) $H^s$ estimate
	\begin{equation}\label{est:G0nu phi}
	\no{N_{p,q}^{0,\nu}\varphi}^2_s\le c_s\left( \no{\varphi}^2_{s}+\no{\varphi}^2_{0}\right)\le c_s\no{\varphi}^2_{s},\end{equation}
	for any $\varphi\in C^\infty_{p,q}(\bar\Om)\cap \H^{\perp}_{p,q}(\Om)$.
	Now we use the same argument as in Step 1  to  show that $N_{p,q}\vp\in C^\infty_{p,q}(\bar\Om)\cap \H^{\perp}_{p,q}(\Om)$ and \eqref{est:G0nu phi} holds for $\nu=0$. For $\vp\in C^\infty_{p,q}(\bar\Om) $, we decompose 
	$\vp=(I- H_{p,q})\vp+ H_{p,q}\vp$. 
	Since $\vp\in C^\infty_{p,q}(\bar\Om)$, it follows from Step 2 that $(I- H_{(p,q})\vp\in C^\infty_{p,q}(\bar\Om)$, and by using \eqref{est:G0nu phi} for $\nu=0$, we may conclude that 
	\begin{equation}\label{est:concludeA}
	\no{N_{p,q}\varphi}^2_{s}\le c_s \no{(I- H_{p,q}) \varphi}^2_{s}\le c_s\no{\varphi}^2_{s} ,\end{equation}
	for all $\vp\in C^\infty_{p,q}(\bar\Om)$. Hence $N_{p,q}$ is globally regular and by the density of $C^\infty_{p,q}(\bar\Om)$ in $H^s_{p,q}(\Om)$, it follows that
	\eqref{est:concludeA} holds for any $\vp\in H^s_{p,q}(\Om)$. Hence $N_{p,q }$ is exactly regular as well.

\end{proof}

\bibliographystyle{alpha}
\bibliography{mybib8-1-24}
\end{document}